	\def\mu{mu}%
	\def\L{}%
	\def\Rmnum#1{#1}%
\newcounter{lsecaff}
\theoremstyle{plain}
\newtheorem{theorem}{Theorem}[section]
\newtheorem{lemma}[theorem]{Lemma}
\newtheorem{corollary}[theorem]{Corollary}
\theoremstyle{definition}
\newtheorem{definition}[theorem]{Definition}
\theoremstyle{remark}
\newtheorem{remark}{Remark}
\newtheorem*{notation}{Notation}
\newtheorem{assumption}{Assumption}
\newcommand{\ip}[2]{\left\langle #1,#2\right\rangle}
\newcommand{\R}{\mathbb{R}}
\newcommand{\tr}{\mathrm{tr}}
\newcommand{\Rmnum}[1]{\expandafter\@slowromancap\romannumeral #1@}
\title{Complexity Analysis of the Regular Simplicial Search Method with Reflection and Shrinking Steps for Derivative--Free Optimization}
\author{
	Liyuan Cao\thanks{School of Mathematics, Nanjing University, Nanjing 210093, P.\,R.\,China.
		Email: \texttt{caoliyuan@nju.edu.cn}}
	\and Wei Hu\thanks{State Key Laboratory of Scientific and Engineering Computing (LSEC),
		Institute of Computational Mathematics and Scientific/Engineering Computing (ICMSEC),
		AMSS, Chinese Academy of Sciences, Beijing 100190, P.\,R.\,China.
		Email: \texttt{huwei@amss.ac.cn}}
	\setcounter{lsecaff}{\value{footnote}}
	\and Jinxin Wang\thanks{Chicago Booth School of Business, University of Chicago,
		Chicago, IL 60637, USA. Email: \texttt{wangjinxin68@gmail.com}}
}
\date{August 2025}
\begin{document}
	\maketitle
	
	\begin{abstract}
		Simplex--type methods, such as the well--known Nelder--Mead algorithm, are widely used in derivative--free optimization (DFO), particularly in practice. Despite their popularity, the theoretical understanding of their convergence properties has been limited, and until very recently essentially no worst--case complexity bounds were available. Recently, Cao et al. provided a sharp error bound for linear interpolation and extrapolation and derived a worst--case complexity result for a basic simplex--type method. Motivated by this, we propose a practical and provable algorithm --- the \emph{regular simplicial search method} (RSSM), that incorporates reflection and shrinking steps, akin to the original method of Spendley et al. We establish worst--case complexity bounds in nonconvex, convex, and strongly convex cases. These results provide guarantees on convergence rates and lay the groundwork for future complexity analysis of more advanced simplex--type algorithms.
	\end{abstract}
	
	\noindent\textbf{Keywords}---Derivative--free optimization; simplex--type methods; Nelder--Mead algorithm; complexity analysis
	
	\section{Introduction}\label{sec:intro}
	Derivative--free optimization (DFO) methods are those solving optimization problems without using derivatives of the objective function. It is a common scenario in practice where derivatives are unavailable, inaccessible or expensive to acquire. A typical example is that the objective function value is obtained through a ``black box'' procedure or complex simulation process, see e.g., \cite{Conn2009}, \cite{Audet2017}, \cite{Lar19} for an introduction of DFO. Apart from the finite--difference methods and others using approximate gradients in gradient--based optimization methods (which are usually referred to as \textit{zeroth--order methods}), derivative--free methods are mainly divided into \textit{model--based methods} and \textit{direct search} methods. See \cite{Berahas2022} for a theoretical and empirical comparison of various approximation approaches.
	
	\textbf{Model--based methods} construct model functions to locally approximate the original objective function and utilize the information of model functions to guide the optimization process. Various model--based methods use the trust--region framework and build model functions through polynomial interpolation or radial basis functions~\cite{Wild08}. Powell designed a series of renowned efficient methods like COBYLA~\cite{Pow94}, UOBYQA~\cite{Pow02}, NEWUOA~\cite{Pow06}, BOBYQA~\cite{Pow09}, and LINCOA~\cite{Pow15}. Zhang provided the cross--platform interface PDFO~\cite{PDFO24} for these methods. Furthermore, a modernized implementation with bug fixes is available in the PRIMA package~\cite{PRIMA23}. Underdetermined interpolation contributes to the high efficiency of model--based trust--region methods~\cite{Xie23Comb,Xie24Trans,Xie25Least,Xie25Suf}. Probabilistic models~\cite{Gratton18Trust, Cao24} and subspace techniques~\cite{Zhang25, Xie2023MoSub, Cartis23} are also introduced for large--scale problems. The construction of new model functions was recently discussed in \cite{Xie2025New}.
	
	\textbf{Direct search methods} compare function values directly  without constructing explicit approximate surrogates during the optimization process. They include the directional direct search methods and simplex--type methods. In this paper, we focus on simplex-type methods, which sequentially manipulate and move a simplex move a simplex in $\R^n$ (i.e. $n+1$ affinely independent points in $\R^n$). The earliest simplex--type method, proposed by Spendley et al.~\cite{Spe62}, includes a reflection step (which reflects the worst point through the hyperplane defined by the remaining \( n \) points) and a shrinking step (which moves the worst \( n \) points toward the best point). Nelder and Mead~\cite{Nel65} extended these operations by introducing expansion and contraction steps, enabling the simplex to deform and better capture curvature information. The Nelder--Mead method~\cite{Nel65} is arguably the most widely used DFO algorithm. Its simplicity and empirical effectiveness have made it popular among researchers and engineers. Notably, it is the underlying algorithm of \texttt{fminsearch} in MATLAB. See \cite{Mor09,Rio13} for the practical performance of the Nelder--Mead method and other DFO methods.

	Despite its popularity in applications, theoretical results on the convergence properties of the Nelder--Mead method or other simplex--type methods are relatively scarce compared with other DFO methods. McKinnon~\cite{Mck98} provided an example demonstrating that even for smooth and strongly convex functions in \( \R^2 \), the original Nelder--Mead method may fail to converge to the unique stationary point. Moreover, to the best of our knowledge, no prior work has established convergence rates or complexity bounds for the Nelder--Mead method or its variants, while the complexity of some other direct search methods~\cite{Vicente13, Dodangeh16} and model--based trust--region methods~\cite{Gar16} is typically $\mathcal{O}\left(n^2/\varepsilon^2\right)$ under standard smoothness and nonconvex setting.
	
	This paper aims to analyze the worst--case complexity of a simplex--type method similar to those in~\cite{Spe62,Yu79} in nonconvex and convex smooth assumptions. We leverage the sharp error bound for linear interpolation and extrapolation established by~\cite{Cao23} to provide quantitative estimates on the total decrease in function value. Our main contributions are summarized as follows:
	
	\begin{itemize}
		\item We propose the regular simplicial search method (RSSM) and analyze its worst--case complexity; to our knowledge, this is the first worst--case result for a simplex--type method that includes shrinking steps.
		\item We quantify the effect of each operation on the total function value over the simplex vertices.
		\item Under smoothness and nonconvexity assumption, RSSM finds an $\varepsilon$--stationary point in $\mathcal{O}\left(n^3/\varepsilon^2\right)$ iterations; additionally under the Polyak--\L{}ojasiewicz (PL) condition, the bound improves to $\mathcal{O}\left(n^2/\varepsilon^2\right)$; in convex case, the bound is  $\mathcal{O}\left(n^2/\varepsilon\right)$ in finding an $\varepsilon$--optimal solution and $\mathcal{O}\left(n^2\log (1/\varepsilon)\right)$ under strong convexity assumption.
	\end{itemize}
	
	The rest of the paper is organized as follows. The remainder of Section~\ref{sec:intro} reviews related work and introduces notation. Section~\ref{sec:prelim} presents the main intuition and preliminary error bounds. Section~\ref{sec:algorithm} introduces the algorithm. Section~\ref{sec:complexity} and Section~\ref{sec:convex} develop the worst--case complexity analysis. Section~\ref{sec:conclusion} concludes. 
	
	\subsection{Related Work}
	
	Although the original Nelder--Mead method usually performs well in practice, it has been shown to converge to non--stationary points in certain cases. For example, Woods~\cite{Woo85} provided a counterexample in \( \R^2 \) for a nonconvex function, while McKinnon~\cite{Mck98} constructed a strictly convex function with continuous derivatives that leads to convergence to a non--stationary point. Researchers have been devoted to analyzing the convergence properties of the Nelder--Mead method and developing convergent variants. 
	
	In 1979, Yu~\cite{Yu79} firstly proved the convergence of the simplex--type method of Spendley et al.~\cite{Spe62} for continuously differentiable and lower--bounded functions. In 1998, Lagarias et al.~\cite{Lag98} proved the convergence of the Nelder--Mead method for strictly convex functions in dimension one and provided partial results for dimension two.

		The sufficient descent condition was introduced to guarantee the convergence of the Nelder--Mead method by Kelley in~\cite{Kel99}, based on the simplex gradient (i.e., the gradient of the linear interpolation function over the simplex). It was proved that the method converges assuming the sufficient descent condition holds at every step. Kelley suggested restarting the algorithm when the condition fails, although no theoretical guarantee was provided to ensure the assumed success. 
		
		In 1999, Tseng~\cite{Tse99} proposed a fortified Nelder--Mead algorithm with sufficient descent condtion that manipulates the $m$ worst points simultaneously and is guaranteed to converge to a stationary point.The adaptation also included geometry control as keeping the interior angles of the simplex bounded away from zero, and the simplex radius expands by at most a factor. 
		
		In 2012, Lagarias et al.~\cite{Lag12} improved earlier results by prohibiting expansion steps. They showed that this restricted version can converge to minimizers of any twice--continuously differentiable, lower--bounded function with a positive--definite Hessian. Their work inspire us to consider the case where fewer kinds of operations is involved.
			
		 Rencently, Gal\'{a}ntai~\cite{Gal22} developed a matrix formulation of the Nelder--Mead method and proved its convergence when $n \leq 8$. However, such formulation cannot prove any explicit complexity result.

	\subsection{Notation}
	
	\begin{notation}
		Let \( \Theta_k \) denote the simplex at the $k$--th iteration, consisting of vertices \( \mathbf{x}_1^{(k)}, \dots, \mathbf{x}_{n+1}^{(k)} \). 
		
		To avoid ambiguity, we say that a simplex is \textit{regular} if and only if all its vertices are at the same distance from the centroid (equivalently, all edges have the same length). We use the following notation:
		
		\begin{itemize} 
			\item \( \mathbf{x}_i^{(k)} \): the \( i \)-th vertex of the simplex \( \Theta_k \); 
			\item \( \mathbf{c}_k = \dfrac{1}{n+1} \sum_{i=1}^{n+1} \mathbf{x}_i^{(k)} \): the centroid of \( \Theta_k \);			
			\item $f^\star$: the minimum value of function $f$ (if it could be attained);
			\item $\mathbf{x}^\star$: an optimal solution;
			\item $\hat{f}$: the linear interpolant of $f$ over $\Theta_k$;
			\item $\nabla \hat f(\mathbf{c}_k)$: the simplex gradient (gradient of $\hat{f}$) at the centroid;
			\item $C^{1,1}_L(\R^n)$: $L$--smooth functions in $\R^n$; 
			\item $\mathcal{F}_{0,L}(\R^n)$: convex and $L$--smooth functions in $\R^n$; 
			\item $\mathcal{F}_{\mu,L}(\R^n)$: $\mu$--strongly convex and $L$--smooth functions in $\R^n$;
			\item \( \mathbf{x}_r^{(k)} \): the reflected point,  \( \mathbf{x}_r^{(k)}  = - \mathbf{x}_{n+1}^{(k)} + \dfrac{2}{n}\sum_{i=1}^{n} \mathbf{x}_i^{(k)}  \); \\
			\item \(  \mathbf{x}^{(k)}_{i_s}\): the shrunken points, $i=2,3,...,n+1$, \(  \mathbf{x}^{(k)}_{i_s} = \gamma \mathbf{x}_i^{(k)} + (1-\gamma) \mathbf{x}_1^{(k)} \); \\
			\item \( \delta_k\): the (regular) simplex radius, i.e., the common distance from the centroid to each vertex of \( \Theta_k \);
			\item \( \ell_1(\mathbf{x}), \dots, \ell_{n+1}(\mathbf{x}) \): the Lagrange polynomials on \( \Theta_k \), satisfying
			\[
			\ell_i(\mathbf{x}_j) =
			\begin{cases}
				1, & i = j, \\
				0, & i \neq j.
			\end{cases}
			\]
		\end{itemize}

		We briefly recall the main operations in the Nelder--Mead algorithm. In each non--shrinking iteration \( k \), we rank $\mathbf{x}_1^{(k)}, \mathbf{x}_2^{(k)}, \dots, \mathbf{x}_n^{(k)}, \mathbf{x}_{n+1}^{(k)}$ by their objective function values. The algorithm then replaces the worst vertex \( \mathbf{x}_{n+1}^{(k)} \) with a new trial point \( \mathbf{x}_\alpha^{(k)} \), defined by:
		\[
		\mathbf{x}_\alpha^{(k)} = \dfrac{1}{n}\sum_{i=1}^{n} \mathbf{x}_i^{(k)} + \alpha \left( \dfrac{1}{n}\sum_{i=1}^{n} \mathbf{x}_i^{(k)} - \mathbf{x}_{n+1}^{(k)} \right).
		\]
		The parameter \( \alpha \) determines the type of operation:
		\begin{align*}
			\alpha = 1 &\quad \text{: reflection } (\mathbf{x}_r^{(k)}), \\
			\alpha > 1 &\quad \text{: expansion } (\mathbf{x}_e^{(k)}), \\
			\alpha \in (0,1) &\quad \text{: outside contraction } (\mathbf{x}_{oc}^{(k)}), \\
			\alpha \in (-1,0) &\quad \text{: inside contraction } (\mathbf{x}_{ic}^{(k)}).
		\end{align*}
		
		The update rule for replacing the worst point depends on the function values at trial points:
		\begin{enumerate}
			\item If \( f(\mathbf{x}_r^{(k)}) < f(\mathbf{x}_1^{(k)}) \), replace the worst point with the better of \( \mathbf{x}_r^{(k)} \) and \( \mathbf{x}_e^{(k)} \).
			\item If \( f(\mathbf{x}_1^{(k)}) \leq f(\mathbf{x}_r^{(k)}) < f(\mathbf{x}_n^{(k)}) \), replace it with \( \mathbf{x}_r^{(k)} \).
			\item If \( f(\mathbf{x}_n^{(k)}) \leq f(\mathbf{x}_r^{(k)}) < f(\mathbf{x}_{n+1}^{(k)}) \), replace it with the better of \( \mathbf{x}_r^{(k)} \) and \( \mathbf{x}_{oc}^{(k)} \).
			\item If \( f(\mathbf{x}_{n+1}^{(k)}) \leq f(\mathbf{x}_r^{(k)}) \) and \( f(\mathbf{x}_{ic}^{(k)}) < f(\mathbf{x}_{n+1}^{(k)}) \), replace $\mathbf{x}_{n+1}^{(k)} $with \( \mathbf{x}_{ic}^{(k)} \).
		\end{enumerate}
		
		If none of the above conditions is satisfied, the algorithm performs a shrinking step:
		\[
		\mathbf{x}_i^{(k+1)} \leftarrow \gamma \mathbf{x}_i^{(k)} + (1-\gamma) \mathbf{x}_1^{(k)}, \quad \text{for } i = 2, \dots, n+1, \quad \gamma \in (0,1).
		\]
		
		We further define the following quantities for simplicity:
		\begin{align*}
			v^{(k)} &\triangleq f(\mathbf{x}_{n+1}^{(k)}) - \dfrac{1}{n} \sum_{i=1}^{n} f(\mathbf{x}_i^{(k)})  ~~ \text{(worst–mean gap) }\\ 
			&= \dfrac{n+1}{n} \bigg[f(\mathbf{x}_{n+1}^{(k)}) - \dfrac{1}{n+1} \sum_{i=1}^{n+1} f(\mathbf{x}_i^{(k)})\bigg] ;\\
			v_r^{(k)} &\triangleq f(\mathbf{x}_r^{(k)}) - \dfrac{1}{n} \sum_{i=1}^{n} f(\mathbf{x}_i^{(k)}); \\
			S^{(k)} &\triangleq \sum_{i=1}^{n+1} f(\mathbf{x}_i^{(k)}) ~~\text{(sum of function value);} \\
			\bar{d}^{(k)} &\triangleq \dfrac{1}{n+1} \sum_{i=1}^{n+1} f(\mathbf{x}_i^{(k)}) - f^\star ~~ \text{(average gap);} \\
			d^{(k)} &\triangleq f(\mathbf{c}_k) - f^\star ~~\text{(central gap)}.
		\end{align*}

	\end{notation}
	
	\section{Intuition and Preliminaries}\label{sec:prelim}
	
	\subsection{Intuition}
	
	The main difficulty in analyzing the algorithmic complexity lies in estimating the change in total function value caused by every operation. The geometric shape of the simplex also evolves in an unpredictable manner. We must estimate, using only the function values of current $n{+}1$ vertices, how each operation changes the total function value; this is the only information the algorithm has access to.
	
	Intuitively, the core descending steps are the reflection steps. Expansion and contraction steps are their variants. Note that the results of these operations lie in the same direction, while the ``step sizes'' are different. Shrinking plays the role of a backtracking safeguard, ensuring eventual progress when reflection does not yield sufficient decrease. To quantify the change in total function value, we have to answer two questions:
	
	\begin{itemize}
		\item How to estimate the function value at the candidate point via available function values at vertices?
		\item How can we measure the ``step size'' at each iteration?
	\end{itemize}
	
	If we can precisely control the shape of the simplex during iterations, the second question can be addressed. Specifically, if the simplex remains regular during iterations, the radius of the simplices is predictable.
	
	The first question is relevant to the sharp error bound on linear interpolation and extrapolation, which we introduce below. Extrapolation arises when the query point lies outside the convex hull of the simplex, which is very common in DFO; interpolation is from shrinking while extrapolation is from reflection. Since our algorithm involves these operations, sharp error bounds for both interpolation and extrapolation are esstential.
	
	\subsection{Analytical Error Bound for Linear Interpolation and Extrapolation}
	
	Cao et al.~\cite{Cao23} derived an analytical sharp bound on the function--approximation error of linear interpolation and extrapolation, enabling a quantitative analysis of the change in function values in simplex operations. We briefly introduce their idea here. 
	
	\subsubsection{Error Estimation Problem}
	Cao et al. notice that finding the error bound is equivalent to maximizing the error in a given function class. Restricting that $f$ is quadratic, they derive an analytical lower bound for the maximum error, and demonstrate that it is indeed a sharp error bound under certain conditions, which can be verified through technical calculation.
	
	Still consider an affinely independent interpolation set 
	\[
	\Theta = \{\mathbf{x}_1, \dots, \mathbf{x}_{n+1}\} \subset \R^n.
	\]
	We denote by $\ell_i(\mathbf{x})$ the Lagrange polynomial associated with $\mathbf{x}_i$. At the query point $\mathbf{x}$, we artificially define $\mathbf{x}_0:=\mathbf{x}$ and set $\ell_0(\mathbf{x}_0) = -1$ for simplicity.
	
	For convenience, we abbreviate $\ell_i(\mathbf{x})$ as $\ell_i$ and assume that $\Theta$ is ordered so that
	$\ell_1\ge \ell_2\ge\cdots\ge \ell_{n+1}$. The index sets are defined as:
	\begin{subequations}
		\begin{align*}
			\mathcal{I}_+ &:= \{ i \in \{0,1,\dots,n+1\} : \ell_i > 0 \} = \{1, 2, \dots, |\mathcal{I}_+|\}, \\
			\mathcal{I}_- &:= \{ i \in \{0,1,\dots,n+1\} : \ell_i < 0 \} = \{0, |\mathcal{I}_+|+1, \dots, n+1\}.
		\end{align*}
	\end{subequations}
	By definition $\ell_0 = -1$ implies $\mathcal{I}_- \neq \emptyset$, and we also have $\mathcal{I}_+ \neq \emptyset$.
	
	The analysis depends on the geometry of the interpolation set via the matrix
	\begin{equation}\label{eq:defG}
		G := \sum_{i=0}^{n+1} \ell_i \, \mathbf{x}_i \mathbf{x}_i^\top \in \mathbb{R}^{n \times n}.
	\end{equation}
	
	Namely, for a quadratic function:
	\[
	f(\mathbf{u})=c+v^\top \mathbf{u}+\dfrac12\,\mathbf{u}^\top H \mathbf{u},\qquad H=H^\top\in\mathbb{R}^{n\times n},
	\] 
	it holds that
	\[
	\hat{f}(\mathbf{x}) - f(\mathbf{x}) 
	= \sum_{i=0}^{n+1} \ell_i f(\mathbf{x}_i) 
	= \dfrac{1}{2}\sum_{i=0}^{n+1}\ell_i\,\mathbf{x}_i^\top H \mathbf{x}_i
	= \dfrac{1}{2}\ip{H}{\sum_{i=0}^{n+1}\ell_i \mathbf{x}_i \mathbf{x}_i^\top}
	= \dfrac{1}{2}\ip{H}{G}.
	\]
	
	\subsubsection{Reformulation in Nonconvex Case}
	The problem of maximizing the interpolation error over quadratic functions in $C^{1,1}_L(\R^n)$ can be formulated as the following semidefinite program:
	\[
	\begin{aligned}
		&\max_H && \dfrac{1}{2}\ip{G}{H} \\
		&\text{s.t.} && -L I \preceq H \preceq L I .
	\end{aligned} 
	\]
	Owing to the symmetry of the constraint set, the absolute value in the objective function can be omitted without loss of generality.
	
	To analytically solve it, let the eigendecomposition of $G$ be
	\[
	G = P \Lambda P^\top,
	\]
	where $\Lambda = \mathrm{diag}(\lambda_1, \dots, \lambda_n)$ contains the eigenvalues in non--increasing order. The objective then becomes
	\[
	\dfrac{1}{2}\ip{G}{H}
	= \dfrac{1}{2}\ip{\Lambda}{P^\top H P}.
	\]
	Since $P$ is orthonormal, the constraint $-L I \preceq H \preceq L I$ is equivalent to
	$-L I \preceq P^\top H P \preceq L I$. Given that $\Lambda$ is diagonal, only the
	diagonal entries of $P^\top H P$ affect the objective. Consequently, the optimal $H^\star$ satisfies
	\[
	P^\top H^\star P \;=\; L\,\mathrm{sign}(\Lambda)
	\qquad\Longleftrightarrow\qquad
	H^\star \;=\; L\,P\,\mathrm{sign}(\Lambda)\,P^\top .
	\]
	The corresponding maximal value is therefore
	\[
	\dfrac{1}{2}\ip{G}{H^\star}
	\;=\; \dfrac{L}{2}\sum_{i=1}^n |\lambda_i(G)|
	\;=\; \dfrac{L}{2}\|G\|_\ast,
	\]
	where $\|G\|_\ast$ denotes the nuclear norm (the sum of singular values; for symmetric
	$G$ it equals $\sum_i |\lambda_i(G)|$).
	
	\subsubsection{Reformulation in Convex Case}
	Turning to the case of convex quadratic functions in $\mathcal{F}_{0,L}(\R^n)$, the maximization problem takes the form
	\[
	\begin{aligned}
		&\max_H && \left| \dfrac{1}{2}\ip{G}{H} \right|\\
		&\text{s.t.} && 0 \preceq H \preceq L I .
	\end{aligned} 
	\]
	
	For the positive part of the objective,
	\[
	\max_H \ \dfrac{1}{2}\ip{G}{H},
	\] the optimal solution is given by
	\[
	H^\star \;=\; LP\mathrm{sign}(\max(0, \Lambda))\,P^\top,
	\]
	yielding the maximal value 
	\[
	\dfrac{1}{2}\ip{G}{H^\star}
	\;=\; \dfrac{L}{2}\sum_{i=1}^n \max(0,\lambda_i(G))
	\;=\; \dfrac{L}{2}\tr(G_+).
	\]
	
	For the negative part of the objective,
	\[
	\min_H -\dfrac{1}{2}\ip{G}{H},
	\]
	the minimizer is
	\[
	H^\star \;=\; LP\mathrm{sign}(\max(0, -\Lambda))\,P^\top ,
	\]
	and the corresponding maximum of the absolute value is 
	\[
	\dfrac{1}{2}\ip{G}{H^\star}
	\;=\; \dfrac{L}{2}\sum_{i=1}^n \max(0,-\lambda_i(G))
	\;=\; \dfrac{L}{2}\tr(G_-).
	\]
	
	\subsubsection{Condition of Being a Sharp Bound}
	
	Let $P_+$ and $P_-$ denote the submatrices of $P$ corresponding to the positive and negative eigenvalues of $G$, respectively, and similarly $\Lambda_+$ and $\Lambda_-$ the corresponding diagonal blocks of $\Lambda$.
	
	Let $Y \in \mathbb{R}^{(n+1) \times n}$ be the matrix whose $i$--th row is $(\mathbf{x}_i - \mathbf{x})^\top$ for $i=1,\dots,n+1$, and $Y_+$, $Y_-$ the submatrices of $Y$ formed by the rows indexed by $\mathcal{I}_+$ and $\mathcal{I}_- \setminus \{0\}$, respectively. 
	
	We also use the diagonal weight matrices:
	\begin{align*}
		\mathrm{diag}(\ell) &\in\mathbb{R}^{(n+1)\times(n+1)}
		&&\text{with diagonal }(\ell_1,\ldots,\ell_{n+1}), \\
		\mathrm{diag}(\ell_+) &\in \mathbb{R}^{|\mathcal{I}_+|\times|\mathcal{I}_+|}
		&&\text{containing }\{\ell_i\}_{i\in\mathcal{I}_+}, \\
		\mathrm{diag}(\ell_-) &\in \mathbb{R}^{(|\mathcal{I}_-|-1)\times(|\mathcal{I}_-|-1)}
		&&\text{containing }\{\ell_i\}_{i\in\mathcal{I}_-\setminus\{0\}}.
	\end{align*}
	
	The lemma below in \cite{Cao23} ensures that we can do all the partitioning above.
	
	\begin{lemma}[{\cite[Lemma~5.2, 5.3]{Cao23}}]\label{lem:count-eigs}
		The numbers of positive and negative eigenvalues of $G$ are
		$|\mathcal{I}_+|-1$ and $|\mathcal{I}_-|-1$, respectively. The matrix $Y_- P_-$ is invertible.
	\end{lemma}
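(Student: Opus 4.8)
The plan is to realize $G$ as a congruence-type product $G = Y^\top \mathrm{diag}(\ell)\, Y$ and then read off its inertia by restricting a diagonal quadratic form to a single hyperplane determined by the Lagrange weights. First I would record two exactness identities for linear interpolation: since the error formula $\sum_{i=0}^{n+1}\ell_i f(\mathbf{x}_i) = \hat f(\mathbf{x}) - f(\mathbf{x})$ vanishes for affine $f$ (where $H=0$), applying it to the constant function and to each coordinate function gives
\[
\sum_{i=0}^{n+1}\ell_i = 0, \qquad \sum_{i=0}^{n+1}\ell_i \mathbf{x}_i = 0,
\]
equivalently $\sum_{i=1}^{n+1}\ell_i = 1$ and $\sum_{i=1}^{n+1}\ell_i\mathbf{x}_i = \mathbf{x}$. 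Substituting $\mathbf{x}_i = (\mathbf{x}_i-\mathbf{x}) + \mathbf{x}$ into the definition~\eqref{eq:defG} of $G$ and using these identities to cancel every cross term, the constant contribution and the $\mathbf{x}_0$ term drop out, leaving the clean factorization $G = \sum_{i=1}^{n+1}\ell_i(\mathbf{x}_i-\mathbf{x})(\mathbf{x}_i-\mathbf{x})^\top = Y^\top \mathrm{diag}(\ell)\, Y$.

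Next I would count eigenvalues by viewing the form $\mathbf{z}\mapsto \mathbf{z}^\top G\mathbf{z}$ as the restriction of the diagonal form $\mathbf{w}\mapsto \sum_{i=1}^{n+1}\ell_i w_i^2$ to the subspace $\mathcal{R}(Y)\subseteq\R^{n+1}$. Affine independence of $\Theta$ makes $Y$ full column rank, so $\mathcal{R}(Y)^\perp = \ker(Y^\top)$ is one-dimensional; since $Y^\top\mathbf{c} = \sum_i\ell_i(\mathbf{x}_i-\mathbf{x}) = 0$ for the nonzero vector $\mathbf{c} := (\ell_1,\dots,\ell_{n+1})^\top$, this kernel is exactly $\mathrm{span}(\mathbf{c})$, whence $\mathcal{R}(Y) = \mathbf{c}^\perp$. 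The diagonal form has $|\mathcal{I}_+|$ positive and $|\mathcal{I}_-|-1$ negative squares (the $-1$ coming from the artificial index $0$), so I only need to track how restriction to $\mathbf{c}^\perp$ changes this signature. Writing $D = \mathrm{diag}(\ell)$ and $\mathbf{d} = D^{-1}\mathbf{c}$, the vector $\mathbf{d}$ is $D$-orthogonal to $\mathbf{c}^\perp$ and satisfies $\mathbf{c}^\top\mathbf{d}\ne 0$, yielding a $D$-orthogonal splitting $\R^{n+1} = \mathbf{c}^\perp \oplus \mathrm{span}(\mathbf{d})$ along which inertia is additive. The sign removed is that of $\mathbf{d}^\top D\mathbf{d} = \mathbf{c}^\top D^{-1}\mathbf{c} = \sum_{i=1}^{n+1}\ell_i = 1 > 0$, so exactly one positive square is lost and none of the negative ones. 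Hence $G$ has $|\mathcal{I}_+|-1$ positive and $|\mathcal{I}_-|-1$ negative eigenvalues; since these total $n$, no zero eigenvalue survives and $G$ is nonsingular.

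For the invertibility of $Y_-P_-$ (square of order $|\mathcal{I}_-|-1$ once the count above is known), I would argue by contradiction on its kernel. Suppose $Y_-P_-\mathbf{z} = 0$ and set $\mathbf{p} = P_-\mathbf{z}$, a vector in the negative eigenspace of $G$. Evaluating $\mathbf{p}^\top G\mathbf{p}$ spectrally gives $\mathbf{z}^\top\Lambda_-\mathbf{z}\le 0$, with equality only if $\mathbf{z}=0$. Evaluating it instead through $G = Y^\top\mathrm{diag}(\ell)\,Y$ gives $\sum_{i}\ell_i\bigl((\mathbf{x}_i-\mathbf{x})^\top\mathbf{p}\bigr)^2$; the rows indexed by $\mathcal{I}_-\setminus\{0\}$ contribute zero because $Y_-\mathbf{p} = Y_-P_-\mathbf{z} = 0$, leaving only the strictly positive weights $\ell_i$ with $i\in\mathcal{I}_+$, so $\mathbf{p}^\top G\mathbf{p}\ge 0$. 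The two evaluations force $\mathbf{p}^\top G\mathbf{p} = 0$ and hence $\mathbf{z} = 0$, so $Y_-P_-$ has trivial kernel and is invertible.

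The step I expect to be most delicate is pinning down \emph{which} part of the signature is destroyed by the hyperplane restriction: the whole lemma hinges on recognizing that the normal direction of $\mathcal{R}(Y)$ is precisely the Lagrange-weight vector $\mathbf{c}$, and that the governing quantity $\mathbf{c}^\top D^{-1}\mathbf{c}$ collapses to $\sum_i\ell_i = 1 > 0$. I would also record the standing nondegeneracy assumption $\ell_i\ne 0$ for all $i$ that is implicit in the partition $\mathcal{I}_+\cup\mathcal{I}_-$, under which $D$ is nonsingular and the additive-inertia argument applies verbatim.
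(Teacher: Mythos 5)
Your proof is correct. Note that the paper itself does not prove this lemma --- it is imported verbatim as \cite[Lemmas 5.2, 5.3]{Cao23} --- so there is no in-paper argument to compare against; what you have written is a valid self-contained derivation of the cited result. The two pillars both check out: the factorization $G=Y^\top\mathrm{diag}(\ell)\,Y$ follows from the exactness identities $\sum_{i=0}^{n+1}\ell_i=0$ and $\sum_{i=0}^{n+1}\ell_i\mathbf{x}_i=0$ (the $i=0$ term drops since $\mathbf{x}_0-\mathbf{x}=0$), and the inertia count via the $D$-orthogonal splitting $\R^{n+1}=\mathbf{c}^\perp\oplus\mathrm{span}(D^{-1}\mathbf{c})$ correctly removes exactly one positive square because $\mathbf{c}^\top D^{-1}\mathbf{c}=\sum_{i=1}^{n+1}\ell_i=1>0$; affine independence gives $\mathrm{rank}(Y)=n$ so that $\mathcal{R}(Y)=\mathbf{c}^\perp$ and the inertia of $G$ coincides with that of the restricted form. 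The contradiction argument for $Y_-P_-$ (evaluating $\mathbf{p}^\top G\mathbf{p}$ once spectrally, once through the weighted sum of squares in which the $\mathcal{I}_-\setminus\{0\}$ rows are annihilated) is also sound. You are right to flag the implicit nondegeneracy $\ell_i\neq 0$ for all $i$: it is built into the paper's convention that $\mathcal{I}_+\cup\mathcal{I}_-$ exhausts $\{0,1,\dots,n+1\}$, and without it both the eigenvalue count and the invertibility claim would need a zero-eigenvalue caveat.
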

	
	We then define
	\begin{equation}\label{eq:defM}
		M := \mathrm{diag}(\ell_+) \, Y_+ P_- \left( Y_- P_- \right)^{-1}.
	\end{equation}
	
	\begin{lemma}[Analytical interpolation error bound {\cite[Theorem 5.1]{Cao23}}]
		Let 
		\[
		\mu_{ij} := e_i^\top M e_{j - |\mathcal{I}_+|}, \quad i \in \mathcal{I}_+, \; j \in \mathcal{I}_- \setminus \{0\},
		\]
		and 
		\[
		\mu_{i0} := \ell_i - \sum_{j \in \mathcal{I}_- \setminus \{0\}} \mu_{ij}, \quad i \in \mathcal{I}_+.
		\]
		If $f \in C_L^{1,1}(\R^n)$ and $\mu_{ij} \ge 0$ for all $(i,j) \in \mathcal{I}_+ \times \mathcal{I}_-$, then
		\[
		\big| \hat{f}(\mathbf{x}) - f(\mathbf{x}) \big| \; \le \; \dfrac{1}{2}\ip{G}{H^\star},
		\]
		and the bound is sharp.
	\end{lemma}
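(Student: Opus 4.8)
The plan is to separate the statement into two independent parts: the \emph{sharpness} of the bound, which is immediate, and the \emph{validity} of the inequality for every $f\in C^{1,1}_L(\R^n)$, which is the substantive direction and the only place the hypothesis $\mu_{ij}\ge 0$ is used. For sharpness I would simply exhibit the quadratic $f^\star(\mathbf u)=\tfrac12\mathbf u^\top H^\star\mathbf u$ with $H^\star=L\,P\,\mathrm{sign}(\Lambda)\,P^\top$. Since $\norm{H^\star}=L$ we have $-LI\preceq H^\star\preceq LI$, so $f^\star\in C^{1,1}_L(\R^n)$, and the exact identity $\hat f^\star(\mathbf x)-f^\star(\mathbf x)=\tfrac12\ip{H^\star}{G}=\tfrac{L}{2}\norm{G}_\ast$ derived earlier shows the bound is attained and hence cannot be improved within the class.

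For the upper bound I would work with the linear error functional $E[f]:=\sum_{i=0}^{n+1}\ell_i f(\mathbf x_i)=\hat f(\mathbf x)-f(\mathbf x)$ and first record the two reproduction identities $\sum_{i=0}^{n+1}\ell_i=0$ and $\sum_{i=0}^{n+1}\ell_i\mathbf x_i=0$, which hold because linear interpolation reproduces affine functions (recall $\ell_0=-1$, $\mathbf x_0=\mathbf x$). These identities make $E$ annihilate every affine function, give $E[\tfrac{L}{2}\norm{\cdot}^2]=\tfrac{L}{2}\tr G$, and let us recenter $G=Y^\top\mathrm{diag}(\ell)Y$ at $\mathbf x$. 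The next step is to read $\{\mu_{ij}\}_{i\in\mathcal I_+,\,j\in\mathcal I_-}$ as a nonnegative coupling between the positive- and negative-weight vertices with row marginals $\sum_{j\in\mathcal I_-}\mu_{ij}=\ell_i$ (this is exactly the definition of $\mu_{i0}$) and column marginals $\sum_{i\in\mathcal I_+}\mu_{ij}=-\ell_j$ (which follow from the construction together with the moment identities). Using both marginals I would rewrite $E[f]=\sum_{i\in\mathcal I_+}\sum_{j\in\mathcal I_-}\mu_{ij}\bigl(f(\mathbf x_i)-f(\mathbf x_j)\bigr)$, turning the functional into a $\mu$-weighted sum of pairwise differences.

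With this coupling in hand I would invoke $L$-smoothness in its convex-splitting form: both $g_\pm:=\tfrac{L}{2}\norm{\cdot}^2\pm f$ are convex. Applying the gradient inequality for $g_\pm$ to each pair $(\mathbf x_i,\mathbf x_j)$ and summing with the weights $\mu_{ij}\ge 0$ bounds each pairwise difference by a quadratic term plus a first-order term in the gradients. The hypothesis $\mu_{ij}\ge 0$ is precisely what keeps these convexity inequalities pointing the right way; without it the argument collapses. Replacing $f$ by $-f$ then yields the two-sided estimate $\lvert E[f]\rvert\le\tfrac12\ip{G}{H^\star}=\tfrac{L}{2}\norm{G}_\ast$.

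The step I expect to be the main obstacle is showing that, after summation, the first-order gradient contributions reassemble exactly so that the smoothness slack telescopes to the nuclear norm $\tfrac{L}{2}\norm{G}_\ast=\tfrac{L}{2}\bigl(\tr(G_+)+\tr(G_-)\bigr)$ rather than to something larger. This is where the construction $M=\mathrm{diag}(\ell_+)\,Y_+P_-\,(Y_-P_-)^{-1}$ enters: it forces the coupling to match second moments against the negative eigenspace spanned by $P_-$, and it is well defined exactly because $Y_-P_-$ is invertible by Lemma~\ref{lem:count-eigs}. Concretely, I would verify that $Y_+^\top M$ reproduces the $G$-action on $P_-$, so that the free gradient terms cancel against $P_-\Lambda_-$ and leave precisely the nuclear-norm value. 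Organizing the whole computation as weak duality for the semidefinite program $\max_{-LI\preceq H\preceq LI}\tfrac12\ip{G}{H}$, with the $\mu_{ij}$ serving as dual multipliers that certify the quadratic optimum over the full class $C^{1,1}_L(\R^n)$, is the cleanest way to make this cancellation transparent, and the sharpness established in the first part confirms the certificate is tight.
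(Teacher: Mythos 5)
First, note that the paper does not actually prove this lemma: it is imported verbatim as \cite[Theorem~5.1]{Cao23}, and the authors explicitly defer all proofs to that reference. So there is no in--paper argument to compare against, and your attempt has to stand on its own.

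On its own terms, your write--up has a genuine gap at exactly the step you flag as ``the main obstacle.'' The sharpness half is fine (the quadratic with Hessian $H^\star=LP\,\mathrm{sign}(\Lambda)P^\top$ attains the bound and lies in $C^{1,1}_L(\R^n)$), and the coupling setup is sound: the row marginals $\sum_{j\in\mathcal I_-}\mu_{ij}=\ell_i$ are definitional, and the column marginals $\sum_{i\in\mathcal I_+}\mu_{ij}=-\ell_j$ do follow from $\mathbf 1^\top\mathrm{diag}(\ell_+)Y_+=-\ell_-^\top Y_-$ (a consequence of $\sum_i\ell_i\mathbf x_i=0$), though you assert rather than verify this. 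The problem is the aggregation step. Writing $E[f]=\sum_{i,j}\mu_{ij}\bigl(f(\mathbf x_i)-f(\mathbf x_j)\bigr)$ and applying the one--sided descent lemma to each pair leaves the term $\sum_{i,j}\mu_{ij}\,\nabla f(\mathbf x_j)^\top(\mathbf x_i-\mathbf x_j)$, which does not vanish (already in the reflection example it equals $\nabla f(\mathbf x_{n+1})^\top(\bar{\mathbf x}-\mathbf x_{n+1})\neq 0$). Your proposed fix --- that ``the free gradient terms cancel'' by the second--moment matching built into $M$ --- cannot be the whole story: in the reflection example one computes $\sum_{i,j}\mu_{ij}\norm{\mathbf x_i-\mathbf x_j}^2=\norm{G}_\ast$, so the symmetric form of the $C^{1,1}_L$ two--point inequality contributes only $\tfrac{L}{4}\norm{G}_\ast$ of quadratic slack, \emph{half} of the claimed bound. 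The gradient terms therefore must contribute the other half rather than cancel, which means they have to be controlled by completing a square against the $-\tfrac{1}{4L}\norm{\nabla f(\mathbf x_i)-\nabla f(\mathbf x_j)}^2$ slack in the full $C^{1,1}_L$ interpolation inequalities; this is precisely where the structure of $M$ and the hypothesis $\mu_{ij}\ge 0$ do real work, and it is the content of the cited theorem. As written, your argument is a plan for that computation, not the computation, and the plan in its literal form (pure cancellation of gradient terms) would fail. The closing remark about weak duality also conflates two different programs: the $\mu_{ij}$ are dual multipliers for the maximization of $E[f]$ over $C^{1,1}_L(\R^n)$ subject to the pairwise interpolation constraints, not for the matrix SDP $\max_{-LI\preceq H\preceq LI}\tfrac12\ip{G}{H}$, which only ranges over quadratics.
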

	
	\subsection{Sharp Error Bounds for Simplices}
	Using the above definitions and results, we can establish the following 
	sharp error bounds in regular simplex operations in nonconvex and convex case.
	
	\subsubsection{Nonconvex Error Bound}
	For the reflected point $f(\mathbf{x}_r^{(k)})$, the centroid $f(\mathbf{c}_k)$ and the shrunken points $f(\mathbf{x}^{(k)}_{i_s})$, we have:
	\begin{align}
		&\left| f(\mathbf{x}_r^{(k)}) -\hat{f}(\mathbf{x}_r^{(k)})\right|
		= \left| f(\mathbf{x}_r^{(k)}) + f(\mathbf{x}_{n+1}^{(k)}) 
		- \dfrac{2}{n} \sum_{i=1}^{n} f(\mathbf{x}_i^{(k)}) \right|
		\leq \dfrac{2n+2}{n} L \delta_k^2, \label{eq:refl-err} \\
		&\left| f(\mathbf{c}_k) -\hat{f}(\mathbf{c}_k)\right|
		= \left| f(\mathbf{c}_k) - \dfrac{1}{n+1} \sum_{i=1}^{n+1} f(\mathbf{x}_i^{(k)}) \right|
		\leq \dfrac{L\delta_k^2}{2}, \label{eq:centroid-err}\\
		&\left| f\!\left(\mathbf{x}^{(k)}_{i_s}\right)
		-\hat{f}\left(\mathbf{x}^{(k)}_{i_s}\right)\right| \leq L \dfrac{n+1}{n} \gamma(1 - \gamma) \delta_k^2,
		\quad i=2,\dots,n+1. \label{eq:shrink-err}
	\end{align}
	
	\subsubsection{Convex Error Bound}
	For the reflected point $f(\mathbf{x}_r^{(k)})$, the centroid $f(\mathbf{c}_k)$, we have:
	\begin{align}
		&\left| f(\mathbf{x}_r^{(k)}) -\hat{f}(\mathbf{x}_r^{(k)})\right|
		= \left| f(\mathbf{x}_r^{(k)}) + f(\mathbf{x}_{n+1}^{(k)}) 
		- \dfrac{2}{n} \sum_{i=1}^{n} f(\mathbf{x}_i^{(k)}) \right|
		\leq \left(1+\dfrac{1}{n}\right)^2 L \delta_k^2, \label{eq:refl-err-cvx} \\
		&\left| f(\mathbf{c}_k) -\hat{f}(\mathbf{c}_k)\right|
		=  \dfrac{1}{n+1} \sum_{i=1}^{n+1} f(\mathbf{x}_i^{(k)})-f(\mathbf{c}_k)   
		\leq \dfrac{L\delta_k^2}{2}. \label{eq:centroid-err-cvx}
	\end{align}
	
	For regular simplices, the coefficients satisfy $\mu_{ij}\ge0$ for reflection, centroid and shrinking queries; see Appendix~\hyperlink{app:reflection}{A.3}. Most of the proofs are omitted here and can be found in \cite{Cao23}. 
	We also provide a self--contained detailed exposition in the appendix on how to compute 
	the matrices $G$ (defined in \eqref{eq:defG})
	and $M$ (defined in \eqref{eq:defM}) for each bound.
	
	Cao et al.~\cite{Cao23} applied these bounds to obtain a decrease estimate for the following Algorithm~\ref{alg1}, which is a basic simplex--type method containing only reflection steps.
	
	\begin{algorithm}
		\caption{Simplex-Type Method with Reflection}
		\label{alg1}
		\begin{algorithmic}[1]
			\Require A starting point \( \mathbf{c}_0 \in \R^n \), initial simplex radius \( \delta_0 \).
			\State Construct a regular simplex centered at \( \mathbf{c}_0 \), where each vertex has distance \( \delta_0 \) from \( \mathbf{c}_0 \). Let the initial simplex be \( \Theta_0 \subset \R^n \), consisting of \( n+1 \) vertices.
			\For{\( k = 0,1,2,\dots \)}
			\State Sort the vertices in \( \Theta_k \) based on function values:
			\[
			f(\mathbf{x}_1^{(k)}) \leq f(\mathbf{x}_2^{(k)}) \leq \dots \leq f(\mathbf{x}_{n+1}^{(k)}).
			\]
			\State Compute the reflection point:	
			\[
			\mathbf{x}_r^{(k)} = -\mathbf{x}_{n+1}^{(k)} + \dfrac{2}{n} \sum_{i=1}^{n} \mathbf{x}_i^{(k)}.
			\]
			\State Evaluate \( f(\mathbf{x}_r^{(k)}) \).
			\State Update the simplex:
			\[
			\Theta_{k+1} \gets \Theta_k \setminus \{\mathbf{x}_{n+1}^{(k)}\} \cup \{\mathbf{x}_r^{(k)}\}.
			\]
			\EndFor
		\end{algorithmic}
	\end{algorithm}
	
	Note that the simplex remains regular throughout the iterations, since the isometric reflection preserves its shape. As a result, no geometric distortion occurs, and the distances used in interpolation remain constant.
	
	Cao et al.~\cite{Cao23}, in their Theorem 8.5, further showed that if the initial simplex size is set as \( \delta_0 = \dfrac{2\varepsilon}{5Ln} \), then the number of iterations required to find a point \( \mathbf{c}_k \) satisfying \( \|\nabla f(\mathbf{c}_k)\| \leq \varepsilon \) is bounded by $\mathcal{O} \left( {n^3/\varepsilon^2} \right)$.
	
	\subsection{Assumptions and Preliminaries}
	We consider the noise-free, unconstrained setting in this work and only assume access to function evaluations of $f$, which is standard in DFO.
	
	The following Assumption \ref{as1} holds throughout the paper.
	\begin{assumption}\label{as1}
		We assume that $f(\mathbf{x})$ is $L$--smooth on \( \R^n \), i.e.,
		\[
		\| \nabla f(\mathbf{x}) - \nabla f(\mathbf{y}) \| \leq L \| \mathbf{x} - \mathbf{y} \|, \quad \forall \mathbf{x}, \mathbf{y} \in \R^n.
		\]
		The minimum of $f(\mathbf{x})$ exists and is equal to $f^\star$.
	\end{assumption}

	The following inequalities in \cite{Cao23} are useful in our analysis, 
	\begin{align}
		&\| \nabla f(\mathbf{c}_k) - \nabla \hat{f}(\mathbf{c}_k) \|^2 \leq \dfrac{n}{4} L^2 \delta_k^2,\label{eq:gra-error}\\
		&\|\nabla \hat{f}(\mathbf{c}_k)\| \leq \dfrac{n}{\delta_k} 
		\left( f(\mathbf{x}_{n+1}^{(k)}) - \dfrac{1}{n+1} \sum_{i=1}^{n+1} f(\mathbf{x}_i^{(k)}) \right)=\dfrac{n}{\delta_k}\dfrac{n}{n+1}v^{(k)}, \\
		&f(\mathbf{x}_{n+1}^{(k)}) - \dfrac{1}{n+1} \sum_{i=1}^{n+1} f(\mathbf{x}_i^{(k)}) = \dfrac{n}{n+1}v^{(k)}
		\geq \dfrac{\delta_k}{n} \left( \|\nabla f(\mathbf{c}_k)\| - \dfrac{\sqrt{n}}{2} L \delta_k \right).\label{eq:grad-gap-lower}
	\end{align}
	
	\section{Our Algorithm}\label{sec:algorithm}
	
	The results in~\cite{Cao23} are promising, and provide a pathway for analyzing the complexity of simplex--type methods. Nevertheless, the convergence of their simple algorithm relies on a sufficiently small $\delta_0$, which is not practical (e.g. $\varepsilon={10}^{-5}$ and $n\approx 10^2$). The introduction of shrinking steps and variable radii $\delta_k$ is essential for practical simplex--type methods.
	
	To this end, we propose Algorithm~\ref{alg2}, which augments Algorithm~\ref{alg1} by incorporating a shrinking operation, in the spirit of the simplex--type methods in~\cite{Spe62,Yu79}. Importantly, the simplex remains regular throughout the iterations.
	
	\begin{algorithm}
		\caption{Regular Simplicial Search Method}
		\small
		\label{alg2}
		\begin{algorithmic}[1]
			\Require A starting point \( \mathbf{c}_0 \in \R^n \), shrinking parameter \( \gamma \in (0,1) \), initial simplex radius \( \delta_0 \), parameter $\beta>0$, and Lipschitz constant \( L \).
			\State Construct a regular simplex centered at \( \mathbf{c}_0 \), where each vertex has distance \( \delta_0 \) from \( \mathbf{c}_0 \). Let the initial simplex be \( \Theta_0 \subset \R^n \), consisting of \( n+1 \) vertices.
			\For{\( k = 0,1,2,\dots \)}
			\State Sort the vertices in \( \Theta_k \) based on function values:
			\[
			f(\mathbf{x}_1^{(k)}) \leq f(\mathbf{x}_2^{(k)}) \leq \dots \leq f(\mathbf{x}_{n+1}^{(k)}).
			\]
			\State Compute the reflection point:	
			\[
			\mathbf{x}_r^{(k)} = -\mathbf{x}_{n+1}^{(k)} + \dfrac{2}{n} \sum_{i=1}^{n} \mathbf{x}_i^{(k)}.
			\]
			\State Evaluate \( f(\mathbf{x}_r^{(k)}) \).
			\If{ \( f(\mathbf{x}_r^{(k)}) - f(\mathbf{x}_{n+1}^{(k)}) \leq -\dfrac{2n+2}{n}\beta L\delta_k^2 ~\)}
			\State Accept \( \mathbf{x}_r^{(k)} \) and update the simplex:
			\[
			\Theta_{k+1} \gets \Theta_k \setminus \{\mathbf{x}_{n+1}^{(k)}\} \cup \{\mathbf{x}_r^{(k)}\}.
			\]
			\State Maintain the simplex radius: \(\delta_{k+1} = \delta_k.\)
			\Else
			\State Shrink the simplex towards \( \mathbf{x}_1^{(k)} \):
			\[
			\mathbf{x}_i^{(k+1)} \gets \gamma \mathbf{x}_i^{(k)} + (1-\gamma) \mathbf{x}_1^{(k)}, \quad i=2,\dots,n+1.
			\]
			\State Update the simplex radius: \(\delta_{k+1} = \gamma \delta_k.\)
			\EndIf
			\EndFor
		\end{algorithmic}
	\end{algorithm}
	\begin{remark}
		Algorithm~\ref{alg2} maintains the regularity of the simplex via isometric reflection and uniform shrinking. The update rule accepts a reflection step only when it yields a sufficient decrease in terms of function values, quantified in terms of the simplex radius \( \delta_k^2 \). Otherwise, a shrinking step is applied. 
	\end{remark}
	
	\begin{remark}
		Although Algorithm~\ref{alg2} involves $L$ and $\beta$, they only serve as \textit{theoretical analysis scaffolds}. In practical implication, one does \emph{not} need to know \(L\) and may replace the coefficient 
		by any user--chosen positive number \(\eta>0\) as
		\[
		f(\mathbf{x}_r^{(k)})-f(\mathbf{x}_{n+1}^{(k)}) \;\le\; -\eta\,\delta_k^2 .
		\]
		From \eqref{eq:gra-error} it is shown that $\|\nabla \hat{f}(\mathbf{c}_k)\|$ closely approximates $\|\nabla f(\mathbf{c}_k)\|$ when $\delta_k$ is small, thus $\|\nabla\hat f(\mathbf{c}_k)\|\le\varepsilon$ can be used as a computable termination criterion in practice.
	\end{remark}
	
	For convenience in the subsequent analysis, we introduce the following reformulations. A new candidate point \( \mathbf{x}_r^{(k)} \) is accepted in Algorithm~\ref{alg2} if and only if it satisfies the sufficient descent condition:
	\begin{equation*}
		f(\mathbf{x}_r^{(k)}) - f(\mathbf{x}_{n+1}^{(k)}) \leq -\dfrac{2n+2}{n}\beta L \delta_k^2.
	\end{equation*}
	
	This condition can be rewritten as:
	\begin{equation}\label{eq:suff-decrease}
		v_r^{(k)} - v^{(k)} \leq -\dfrac{2n+2}{n}\beta L \delta_k^2.
	\end{equation}
	
	Additionally, the extrapolation error bound for reflection \eqref{eq:refl-err} is equivalent to:
	\begin{equation}\label{eq:gr-plus-bound}
		\left| v_r^{(k)} + v^{(k)} \right| \leq \dfrac{2n+2}{n} L \delta_k^2.
	\end{equation}
	
	\section{Complexity on Nonconvex Smooth Functions}\label{sec:complexity}
	For nonconvex smooth functions, we analyze the complexity of Algorithm~\ref{alg2} in finding an $\varepsilon$--stationary point.
	
	Let $N_\varepsilon$ be the first iteration such that the algorithm terminates. It is the first iteration that the norm of the gradient at the centroid drops below $\varepsilon$, i.e., 
	\[  N_\varepsilon = \arg\min_k \left\{k \in \mathbb{N} ~\middle|~ \|\nabla f(\mathbf{c}_k)\| \le \varepsilon \right\}. 
	\] 
	\subsection{Worst--Case Complexity under General Nonconvexity Assumption}
	
	\subsubsection{Sufficient Condition for Reflection (Nonconvex Case)}
	
	We begin by establishing that \( \mathbf{x}_r^{(k)} \) is always accepted when the simplex radius \( \delta_k \) is sufficiently small.
	
	\begin{lemma}[Sufficient Condition for Reflection] \label{lem:sufficient-reflect}
		Suppose that Assumption~\ref{as1} hold. 
		If 
		\begin{equation}\label{eq:ncvsd}
			\delta_k \leq \dfrac{\|\nabla f(\mathbf{c}_k)\|}{L\kappa^1_{n,\beta}}, ~~\text{where} ~~ \kappa^1_{n,\beta}:= (\beta+1)n + \dfrac{\sqrt{n}}{2},
		\end{equation}
		then $\mathbf{x}_r^{(k)}$ is accepted and iteration $k$ is a reflection step. 
		Furthermore,  
		\begin{equation}\label{eq:ncvglb}
			\delta_k \geq \bar{\delta} := \dfrac{\gamma \varepsilon}{L\kappa^1_{n,\beta}}, 
			\quad \forall k \in \{0, 1, \dots, N_\varepsilon-1\} 
		\end{equation}
		as long as $\delta_0 \geq \bar{\delta}$. 
	\end{lemma}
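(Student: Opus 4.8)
The plan is to prove the two assertions in sequence, with the lower bound \eqref{eq:ncvglb} built on the contrapositive of the sufficient condition \eqref{eq:ncvsd}.

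For the sufficient condition, the approach is to verify the acceptance inequality \eqref{eq:suff-decrease} directly from the two--sided extrapolation error bound \eqref{eq:gr-plus-bound} and the worst--mean gap lower bound \eqref{eq:grad-gap-lower}. First I would use the upper half of \eqref{eq:gr-plus-bound}, namely $v_r^{(k)} \le -v^{(k)} + \tfrac{2n+2}{n} L \delta_k^2$, to obtain
\[
v_r^{(k)} - v^{(k)} \;\le\; -2\,v^{(k)} + \dfrac{2n+2}{n} L \delta_k^2 .
\]
Comparing this with the target \eqref{eq:suff-decrease}, it suffices to establish the single scalar inequality $v^{(k)} \ge \tfrac{n+1}{n}(\beta+1) L \delta_k^2$. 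Converting the worst--mean gap to the centered quantity via $v^{(k)} = \tfrac{n+1}{n}\big[f(\mathbf{x}_{n+1}^{(k)}) - \tfrac{1}{n+1}\sum_i f(\mathbf{x}_i^{(k)})\big]$ and substituting \eqref{eq:grad-gap-lower} gives
\[
v^{(k)} \;\ge\; \dfrac{n+1}{n^2}\,\delta_k\left(\|\nabla f(\mathbf{c}_k)\| - \dfrac{\sqrt{n}}{2} L \delta_k\right).
\]
Dividing the desired inequality by $\tfrac{n+1}{n}\delta_k>0$ and rearranging reduces it exactly to $\|\nabla f(\mathbf{c}_k)\| \ge L\delta_k\big((\beta+1)n + \tfrac{\sqrt{n}}{2}\big) = L\delta_k\,\kappa^1_{n,\beta}$, which is precisely the hypothesis \eqref{eq:ncvsd}. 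Hence the reflection is accepted and iteration $k$ is a reflection step.

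For the radius lower bound \eqref{eq:ncvglb}, the approach is induction on $k$ over $\{0,\dots,N_\varepsilon-1\}$ using the contrapositive of the first part. The base case is the standing hypothesis $\delta_0 \ge \bar\delta$. For the inductive step at an index $k < N_\varepsilon$, I split into two cases according to the algorithm's branch. If iteration $k$ is a reflection step, then $\delta_{k+1} = \delta_k \ge \bar\delta$ and the bound is inherited. If iteration $k$ is a shrinking step, then reflection was \emph{not} accepted, so by the contrapositive of \eqref{eq:ncvsd} the premise must fail, i.e. $\delta_k > \|\nabla f(\mathbf{c}_k)\| / (L\kappa^1_{n,\beta})$. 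Since $k < N_\varepsilon$ forces $\|\nabla f(\mathbf{c}_k)\| > \varepsilon$ by the definition of $N_\varepsilon$, we get $\delta_k > \varepsilon/(L\kappa^1_{n,\beta})$, whence $\delta_{k+1} = \gamma\delta_k > \gamma\varepsilon/(L\kappa^1_{n,\beta}) = \bar\delta$. This closes the induction.

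The main obstacle is the careful algebraic chaining in the first part: combining the two--sided bound \eqref{eq:gr-plus-bound} with the gradient--gap bound \eqref{eq:grad-gap-lower} while tracking the precise $n$--dependent constants, and converting between $v^{(k)}$ and the centered quantity $\tfrac{n}{n+1}v^{(k)}$ appearing in \eqref{eq:grad-gap-lower}, so that the threshold collapses exactly to $\kappa^1_{n,\beta} = (\beta+1)n + \tfrac{\sqrt{n}}{2}$. Once these constants line up, the second part is essentially mechanical, since shrinking can only shrink an already ``too large'' radius and thus cannot push $\delta_k$ below $\bar\delta$ before termination.
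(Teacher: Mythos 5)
Your proposal is correct and follows essentially the same route as the paper: both combine the extrapolation bound \eqref{eq:gr-plus-bound} with the worst--mean gap bound \eqref{eq:grad-gap-lower} to reduce acceptance to the threshold $\|\nabla f(\mathbf{c}_k)\|\ge \kappa^1_{n,\beta}L\delta_k$, and both prove \eqref{eq:ncvglb} by the same induction on reflection versus shrinking steps. The only difference is presentational --- you argue the first part directly while the paper argues by contradiction --- and your constants check out exactly.
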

	
	\begin{proof}
		Suppose that \( \mathbf{x}_r^{(k)} \) is rejected at iteration \( k \). 
		Then, by combining  \eqref{eq:grad-gap-lower}, \eqref{eq:suff-decrease}, \eqref{eq:gr-plus-bound}, 
		\( \delta_k \) must satisfy the following inequality
		\begin{equation}\label{eq:delta-threshold}
			\left( \beta n + n + \dfrac{\sqrt{n}}{2} \right) L \delta_k \geq \|\nabla f(\mathbf{c}_k)\|.
		\end{equation}
		When \( \delta_k \leq \dfrac{\|\nabla f(\mathbf{c}_k)\|}{L\kappa^1_{n,\beta}} \), inequality \eqref{eq:delta-threshold} does not hold,  and thus \( \mathbf{x}_r^{(k)} \) must be accepted.
		
		We prove \eqref{eq:ncvglb} by induction. If \( \delta_k \geq \bar{\delta} = \dfrac{\gamma \varepsilon}{L\kappa^1_{n,\beta}} \) for some \( k \in \{0, \dots, N_\varepsilon -1\} \), then $\delta_{k+1} \geq \bar{\delta}$ if $k$ is a reflection step. 
		
		By \eqref{eq:ncvsd}, $\delta_k >  \dfrac{\varepsilon}{L\kappa^1_{n,\beta}}$ if $k$ is a shrinking step, thus $\delta_{k+1} \geq \bar{\delta}$ holds for both cases. Since $\delta_{0} \geq \bar{\delta}$, by induction  \eqref{eq:ncvglb} holds for all \( k \in \{0, \dots, N_\varepsilon\} \).
	\end{proof}
	
	\subsubsection{Function Value Change due to Reflection and Shrinking Steps}
	\begin{lemma}[Minimum Function Decrease due to a Reflection Step]\label{lem:min-decrease-reflection}
		Suppose that Assumption~\ref{as1} holds, $\delta_0 > \bar{\delta} $ and the stopping condition is not satisfied yet. If the reflection step \( k \) is successful for some \( k \in \mathbb{N} \), then:
		\begin{equation}\label{eq:ref-step-decrease}
			S^{(k+1)} - S^{(k)} \leq -(n+1)\dfrac{2\beta \gamma^2 \varepsilon^2}{L n (\kappa^1_{n,\beta})^2 }.
		\end{equation}
	\end{lemma}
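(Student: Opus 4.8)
The plan is to recognize that this lemma is essentially a direct composition of two facts already established: the acceptance criterion that defines a successful reflection step, and the uniform lower bound on the simplex radius from Lemma~\ref{lem:sufficient-reflect}. The first move is to compute the change in the sum $S^{(k)}$ exactly. A reflection step replaces only the single worst vertex $\mathbf{x}_{n+1}^{(k)}$ by $\mathbf{x}_r^{(k)}$, leaving the other $n$ vertices untouched, so the sum of function values changes by precisely one term:
\[
S^{(k+1)} - S^{(k)} = f(\mathbf{x}_r^{(k)}) - f(\mathbf{x}_{n+1}^{(k)}).
\]
This is the only place where the structure of the reflection operation (swapping out exactly one vertex) enters, and it is worth stating explicitly since it is the sole geometric input to the argument.

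Next, I would invoke the acceptance rule. Because iteration $k$ is by hypothesis a \emph{successful} reflection step, the sufficient descent condition triggering acceptance holds, namely $f(\mathbf{x}_r^{(k)}) - f(\mathbf{x}_{n+1}^{(k)}) \le -\tfrac{2n+2}{n}\beta L \delta_k^2$. Substituting into the identity above yields
\[
S^{(k+1)} - S^{(k)} \le -\dfrac{2n+2}{n}\,\beta L\,\delta_k^2 .
\]

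To convert the $\delta_k^2$ term into the explicit $\varepsilon$--dependent bound, I would apply the radius lower bound \eqref{eq:ncvglb} from Lemma~\ref{lem:sufficient-reflect}. Since we are assuming $\delta_0 > \bar\delta$ and the stopping condition has not yet been met (so $k \le N_\varepsilon - 1$), that lemma guarantees $\delta_k \ge \bar\delta = \tfrac{\gamma\varepsilon}{L\kappa^1_{n,\beta}}$, hence $\delta_k^2 \ge \tfrac{\gamma^2\varepsilon^2}{L^2(\kappa^1_{n,\beta})^2}$. Because the coefficient $-\tfrac{2n+2}{n}\beta L$ is negative, replacing $\delta_k^2$ by its lower bound preserves the upper bound on $S^{(k+1)} - S^{(k)}$, giving
\[
S^{(k+1)} - S^{(k)} \le -\dfrac{2(n+1)}{n}\,\beta L \cdot \dfrac{\gamma^2\varepsilon^2}{L^2(\kappa^1_{n,\beta})^2} = -(n+1)\dfrac{2\beta\gamma^2\varepsilon^2}{Ln(\kappa^1_{n,\beta})^2},
\]
which is exactly the claimed inequality \eqref{eq:ref-step-decrease}.

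I do not expect a genuine obstacle here; the result is a mechanical chaining of the acceptance criterion with the previously proved radius floor, followed by a routine simplification of constants. The only points requiring care are bookkeeping ones: confirming that the sum $S$ changes by a single summand (a consequence of reflection swapping exactly one vertex while keeping the simplex regular), checking that the index range $k \le N_\varepsilon - 1$ places us within the regime where \eqref{eq:ncvglb} is valid, and verifying that the sign of the coefficient makes the substitution $\delta_k^2 \ge \bar\delta^2$ tighten rather than loosen the bound in the correct direction.
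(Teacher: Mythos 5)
Your proposal is correct and follows essentially the same route as the paper's own proof: the sum changes by exactly the one swapped vertex, the acceptance criterion supplies the $-\tfrac{2n+2}{n}\beta L\delta_k^2$ decrease, and the radius floor $\delta_k \ge \bar\delta$ from Lemma~\ref{lem:sufficient-reflect} converts this into the stated $\varepsilon$--dependent bound. Your write-up is somewhat more explicit about the sign check and the index range, but the argument is identical in substance.
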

	\begin{proof}
		In every reflection step, the sufficient decrease condition always holds. Meanwhile Lemma~\ref{lem:sufficient-reflect} implies that $\delta_k > \bar{\delta} =\dfrac{\gamma \varepsilon}{L\kappa^1_{n,\beta}}, 
		\quad \forall k \in \{0, 1, \dots, N_\varepsilon -1\}.$ Thus:
		\[
		S^{(k+1)} - S^{(k)} = f(\mathbf{x}_r^{(k)}) - f(\mathbf{x}_{n+1}^{(k)}) \leq -\dfrac{2n+2}{n}\beta L \delta_k^2 < -(n+1)\dfrac{2\beta \gamma^2 \varepsilon^2}{Ln(\kappa^1_{n,\beta})^2  }.
		\]
	\end{proof}
	
	\begin{lemma}[Maximum Function Increase due to a Shrinking Step]\label{lem:max-increase-shrink}
		Suppose that Assumption~\ref{as1} holds, $\delta_0 > \bar{\delta} $ and the stopping condition is not satisfied yet. If the reflection step $k$ fails for some \( k \in \mathbb{N} \), then:
		\begin{equation}\label{eq:shrink-step-increase}
			S^{(k+1)} - S^{(k)} \leq L\gamma(1-\gamma)(n+1)\delta_k^2.
		\end{equation}
		
		\begin{proof}
			In the shrinking step \(k\), \(\mathbf{x}_i^{(k+1)} = \gamma \mathbf{x}_i^{(k)} + (1 - \gamma) \mathbf{x}_1^{(k)}\), \(i = 2, \dots, n+1\).
			For \(i=2,\ldots,n+1\), the interpolation error bound in \eqref{eq:shrink-err} implies that
			\[
			f(\mathbf{x}_i^{(k+1)})\le 
			\gamma f(\mathbf{x}_i^{(k)}) + (1 - \gamma) f(\mathbf{x}_1^{(k)}) + \dfrac{n+1}{n} L \gamma(1 - \gamma) \delta_k^2.
			\]
			Summing over \( i = 1, \dots, n+1 \), we obtain
			\begin{align*}
				S^{(k+1)} &= f(\mathbf{x}_1^{(k)})+ \sum_{i=2}^{n+1}f(\mathbf{x}_i^{(k+1)}) \\
				&\le (1-\gamma)(n+1)f(\mathbf{x}_1^{(k)}) + \gamma S^{(k)} + (n+1)L \gamma(1 - \gamma) \delta_k^2 \\
				&\le S^{(k)} + (n+1)L\gamma(1-\gamma)\delta_k^2.
			\end{align*}
		\end{proof}
	\end{lemma}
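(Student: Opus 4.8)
The plan is to bound $S^{(k+1)}$ one vertex at a time, exploiting two facts: each new vertex produced by shrinking is a convex combination of two old vertices, and the linear interpolant $\hat{f}$ reproduces $f$ exactly at the vertices of $\Theta_k$. The only quantitative ingredient beyond these structural observations is the shrinking error bound \eqref{eq:shrink-err}, which is already available.

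First I would note that the best vertex is untouched, so $\mathbf{x}_1^{(k+1)} = \mathbf{x}_1^{(k)}$, while each of the remaining vertices becomes $\mathbf{x}_i^{(k+1)} = \gamma \mathbf{x}_i^{(k)} + (1-\gamma)\mathbf{x}_1^{(k)}$. Because $\hat{f}$ is affine and interpolates $f$ at the vertices, its value at a shrunken point equals $\gamma f(\mathbf{x}_i^{(k)}) + (1-\gamma) f(\mathbf{x}_1^{(k)})$ exactly; adding the error bound \eqref{eq:shrink-err} then yields a per-vertex upper bound on $f(\mathbf{x}_i^{(k+1)})$ of the form $\gamma f(\mathbf{x}_i^{(k)}) + (1-\gamma) f(\mathbf{x}_1^{(k)}) + \frac{n+1}{n} L \gamma(1-\gamma)\delta_k^2$ for $i = 2,\dots,n+1$.

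Next I would sum these $n$ inequalities over $i = 2,\dots,n+1$ and add the unchanged value $f(\mathbf{x}_1^{(k)})$. The summation over $n$ indices cancels the $1/n$ in the error term, producing a clean aggregate error of $L(n+1)\gamma(1-\gamma)\delta_k^2$, while the linear terms collect into $\gamma S^{(k)}$ together with a multiple of $f(\mathbf{x}_1^{(k)})$. A short bookkeeping of those coefficients ($-\gamma$ from the $\gamma f(\mathbf{x}_i^{(k)})$ sum, $n(1-\gamma)$ from the $(1-\gamma)f(\mathbf{x}_1^{(k)})$ terms, and $+1$ from the fixed best vertex) shows the multiple is exactly $(1-\gamma)(n+1)$, giving the intermediate estimate $S^{(k+1)} \le (1-\gamma)(n+1) f(\mathbf{x}_1^{(k)}) + \gamma S^{(k)} + L(n+1)\gamma(1-\gamma)\delta_k^2$.

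The last step is to eliminate the dependence on $f(\mathbf{x}_1^{(k)})$, and this is the only place where the sorting of the vertices enters. Since $\mathbf{x}_1^{(k)}$ is the best (lowest-value) vertex, we have $(n+1)f(\mathbf{x}_1^{(k)}) \le S^{(k)}$, and because $1-\gamma > 0$ this lets me absorb $(1-\gamma)(n+1)f(\mathbf{x}_1^{(k)}) + \gamma S^{(k)}$ into $S^{(k)}$, yielding the claimed bound. I expect the only mildly delicate point to be tracking the $f(\mathbf{x}_1^{(k)})$ coefficient correctly, since it is precisely this bookkeeping that reveals the vertex ordering $f(\mathbf{x}_1^{(k)}) \le f(\mathbf{x}_i^{(k)})$ as the fact that closes the estimate; everything else is the affine-invariance of $\hat{f}$ combined with the off-the-shelf bound \eqref{eq:shrink-err}.
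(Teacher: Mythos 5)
Your proposal is correct and follows essentially the same route as the paper's proof: apply the shrinking error bound \eqref{eq:shrink-err} vertex by vertex, sum, collect the $f(\mathbf{x}_1^{(k)})$ coefficient into $(n+1)(1-\gamma)$, and absorb it into $S^{(k)}$ using $(n+1)f(\mathbf{x}_1^{(k)})\le S^{(k)}$. The bookkeeping you describe matches the paper's computation exactly.
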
  
	
	\begin{lemma}[Number of Shrinking Steps]\label{lem:num-shrink-steps}
		Suppose that Assumption~\ref{as1} holds, $\delta_0 > \bar{\delta} $ and the stopping condition is not satisfied yet. Define \( N_r \) as the number of reflection steps within \( N_\varepsilon \) iterations, and \( N_s \) as the number of shrinking steps. Then, we have the bound:
		\begin{equation}\label{eq:num-shrink-steps}
			N_s < \dfrac{\log(\bar{\delta}/\delta_0)}{\log \gamma}. 
		\end{equation}
	\end{lemma}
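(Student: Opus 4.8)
The plan is to exploit the fact that the simplex radius evolves in a completely deterministic way under Algorithm~\ref{alg2}: it is left unchanged at a reflection step and multiplied by $\gamma$ at a shrinking step. First I would observe that after completing the $N_\varepsilon$ iterations indexed $0,\dots,N_\varepsilon-1$, among which exactly $N_s$ are shrinking steps, the radius has been scaled by $\gamma$ precisely $N_s$ times, so that
\[
\delta_{N_\varepsilon} = \gamma^{N_s}\,\delta_0 .
\]
This identity reduces the counting of shrinking steps to controlling how small the terminal radius is allowed to become.

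Next I would invoke the uniform lower bound on the radius supplied by Lemma~\ref{lem:sufficient-reflect}, namely $\delta_k \ge \bar{\delta}$ throughout the run. In particular $\delta_{N_\varepsilon}\ge\bar{\delta}$, which combined with the identity above gives $\gamma^{N_s}\delta_0 \ge \bar{\delta}$, i.e. $\gamma^{N_s}\ge \bar{\delta}/\delta_0$. Taking logarithms and then dividing by $\log\gamma$ — which is negative since $\gamma\in(0,1)$ and therefore \emph{reverses} the inequality — yields $N_s \le \log(\bar{\delta}/\delta_0)/\log\gamma$. The hypothesis $\delta_0>\bar{\delta}$ ensures $\log(\bar{\delta}/\delta_0)<0$, so the right-hand side is a strictly positive number and the bound is nonvacuous.

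To upgrade $\le$ to the strict inequality claimed, I would examine the final shrinking step more carefully. Letting $k^\ast$ be its index, the contrapositive of \eqref{eq:ncvsd} (rejection of the reflection forces $\delta_{k^\ast} > \|\nabla f(\mathbf{c}_{k^\ast})\|/(L\kappa^1_{n,\beta})$) together with $\|\nabla f(\mathbf{c}_{k^\ast})\|>\varepsilon$ for $k^\ast<N_\varepsilon$ gives $\delta_{k^\ast} > \varepsilon/(L\kappa^1_{n,\beta})$ strictly. Hence $\delta_{N_\varepsilon}=\gamma\,\delta_{k^\ast} > \bar{\delta}$ strictly, since all subsequent iterations are reflections that leave the radius fixed. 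Propagating this strict inequality through the logarithm yields $N_s < \log(\bar{\delta}/\delta_0)/\log\gamma$, while the case $N_s=0$ is immediate because the right-hand side is positive.

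The only genuine subtlety — and thus the step I expect to require the most care — is correctly tracking the direction of the inequality when dividing by the negative quantity $\log\gamma$, and honestly sourcing the strictness from the sharp radius lower bound at the last shrinking step rather than from the weaker global bound $\delta_k\ge\bar{\delta}$.
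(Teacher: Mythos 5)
Your proof is correct and follows essentially the same route as the paper: both reduce the count to the identity $\delta_{N_\varepsilon}=\gamma^{N_s}\delta_0$, invoke the radius lower bound from Lemma~\ref{lem:sufficient-reflect}, and take logarithms, minding the sign of $\log\gamma$. Your extra care in sourcing the strict inequality from the contrapositive of \eqref{eq:ncvsd} at the last shrinking step is a welcome refinement, since the paper's proof asserts $\delta_k>\bar{\delta}$ strictly while the cited lemma only states $\delta_k\ge\bar{\delta}$.
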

	
	\begin{proof}
		Lemma~\ref{lem:sufficient-reflect} indicates that $\delta_k > \bar{\delta}$, $\forall k \leq N_\varepsilon - 1$. Namely, $\delta_0 \gamma^{N_s} > \bar{\delta}$, i.e.
		
		\[
		N_s < \dfrac{\log(\bar{\delta}/\delta_0)}{\log \gamma}.
		\]
	\end{proof}
	
	\begin{lemma}[Maximum Ascent due to All Shrinking Steps]\label{lem:max-ascent-all-shrink}
		Suppose that Assumption~\ref{as1} holds, $\delta_0 > \bar{\delta} $ and the stopping condition is not satisfied yet. The maximum possible ascent of total function value due to all shrinking steps is at most \( (n+1)\psi_0\), where 
		\[
		\psi_0 := \dfrac{L \gamma}{1 + \gamma}\delta_0^2.
		\] 
		\begin{proof}
			The maximum ascent of total function value due to all shrinking steps is
			\begin{align*}
				L \gamma (1 - \gamma) (n+1)\sum_{k=0}^{N_s-1} (\gamma^k \delta_0)^2 &\leq L \gamma (1 - \gamma)  (n+1)\delta_0^2 \sum_{k=0}^{\infty} \gamma^{2k} \\
				&= L \gamma (1 - \gamma)  (n+1)\delta_0^2 \dfrac{1}{1 - \gamma^2} = \dfrac{L \gamma}{1 + \gamma} (n+1) \delta_0^2.
			\end{align*}
		\end{proof}
	\end{lemma}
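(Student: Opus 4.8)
The plan is to aggregate the single-step bound of Lemma~\ref{lem:max-increase-shrink} over all shrinking iterations and then sum the resulting geometric series. The key structural observation is that in Algorithm~\ref{alg2} the simplex radius is left unchanged on a reflection step ($\delta_{k+1}=\delta_k$) and is scaled by the factor $\gamma\in(0,1)$ only on a shrinking step ($\delta_{k+1}=\gamma\delta_k$). Consequently, if we list the shrinking iterations in the order they occur and index them by $j=0,1,\dots,N_s-1$, the radius in effect at the $j$-th shrinking step equals $\gamma^{j}\delta_0$, regardless of how many reflection steps are interleaved. This reduces a seemingly iteration-dependent quantity to a clean geometric progression.

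First I would invoke Lemma~\ref{lem:max-increase-shrink}, which guarantees that a shrinking step at radius $\delta_k$ increases the vertex-sum by at most $L\gamma(1-\gamma)(n+1)\delta_k^2$. Summing this bound over all $N_s$ shrinking steps and substituting $\delta_k=\gamma^{j}\delta_0$ for the $j$-th shrink gives a total ascent of at most
\[
L\gamma(1-\gamma)(n+1)\sum_{j=0}^{N_s-1}\bigl(\gamma^{j}\delta_0\bigr)^2
= L\gamma(1-\gamma)(n+1)\,\delta_0^2\sum_{j=0}^{N_s-1}\gamma^{2j}.
\]

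Next I would bound the finite geometric sum by its infinite counterpart, $\sum_{j=0}^{N_s-1}\gamma^{2j}\le\sum_{j=0}^{\infty}\gamma^{2j}=\frac{1}{1-\gamma^2}$, which is valid since $\gamma\in(0,1)$. Substituting and using the factorization $1-\gamma^2=(1-\gamma)(1+\gamma)$ collapses the expression to $\frac{L\gamma}{1+\gamma}(n+1)\delta_0^2=(n+1)\psi_0$, the claimed bound.

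I do not anticipate a genuine obstacle here: the statement follows almost mechanically once the radius bookkeeping is in place. The only point requiring care is the structural claim that reflection steps do not reset the shrink counter---i.e., that the $j$-th shrinking step always acts at radius $\gamma^{j}\delta_0$---since this is precisely what allows the per-step radii to form an exact geometric sequence; the hypotheses $\delta_0>\bar\delta$ and the assumption that the stopping condition is not yet satisfied are inherited from Lemma~\ref{lem:max-increase-shrink} and ensure the per-step bound applies at every shrinking step within the first $N_\varepsilon$ iterations.
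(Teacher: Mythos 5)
Your proposal is correct and follows essentially the same route as the paper: apply the per-step bound of Lemma~\ref{lem:max-increase-shrink} to each shrinking step, note that the $j$-th shrink occurs at radius $\gamma^{j}\delta_0$, and bound the resulting finite geometric sum by $\sum_{j=0}^{\infty}\gamma^{2j}=1/(1-\gamma^2)$. Your explicit justification that interleaved reflection steps leave the radius unchanged is only implicit in the paper's one-line computation, but the argument is identical.
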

	
	\subsubsection{Worst--Case Complexity (\Rmnum{1})}
	\begin{theorem}[General Worst--Case Complexity]\label{thm:general-wc}
		Suppose that Assumption~\ref{as1} holds, $\delta_0 > \bar{\delta} $ and the stopping condition is not satisfied yet. For Algorithm~\ref{alg2}, the number of iterations in which \( \|\nabla f(\mathbf{c}_k)\| > \varepsilon \) is bounded above by:
		\begin{equation}\label{eq:general-wc}
			\dfrac{ L( \bar{d}_0+\psi_0) n (\kappa^1_{n,\beta})^{2} }{2\beta \gamma^2 \varepsilon^2} + \dfrac{\log(\bar{\delta}/\delta_0)}{\log \gamma} = \mathcal{O}\left(\dfrac{n^3}{\varepsilon^2}\right),
		\end{equation}
		where
		\[
		\bar{d}_0 := \dfrac{1}{n+1} \sum_{u \in \Theta_0} f(u) - f^\star.
		\]
	\end{theorem}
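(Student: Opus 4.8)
The plan is to telescope the vertex–sum $S^{(k)}$ over the first $N_\varepsilon$ iterations, separating the reflection steps (which strictly decrease $S$) from the shrinking steps (which may raise it, but only by a bounded aggregate amount). Writing $N_\varepsilon = N_r + N_s$ for the counts of reflection and shrinking steps, summing the per–step changes gives
\[
S^{(N_\varepsilon)} - S^{(0)} = \sum_{k:\text{ reflection}} \big(S^{(k+1)}-S^{(k)}\big) + \sum_{k:\text{ shrink}} \big(S^{(k+1)}-S^{(k)}\big).
\]
First I would lower–bound each reflection contribution using Lemma~\ref{lem:min-decrease-reflection}, which guarantees that a successful reflection step lowers $S$ by at least $(n+1)\frac{2\beta\gamma^2\varepsilon^2}{Ln(\kappa^1_{n,\beta})^2}$; hence the first sum is at most $-N_r(n+1)\frac{2\beta\gamma^2\varepsilon^2}{Ln(\kappa^1_{n,\beta})^2}$. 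For the shrinking steps, rather than bounding each term separately (which would fail, since the individual increases are controlled only by $\delta_k^2$), I would invoke Lemma~\ref{lem:max-ascent-all-shrink}, which exploits the geometric decay $\delta_{k+1}=\gamma\delta_k$ to cap the entire second sum by $(n+1)\psi_0$.

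This aggregation is the conceptual heart of the argument: it is precisely because the shrink radii form a convergent geometric series that the cumulative ascent stays a finite constant independent of the iteration count, leaving the reflection count controllable. Next I would close the telescope from below. Since $f(\mathbf{x}_i^{(k)})\ge f^\star$ for every vertex we have $S^{(N_\varepsilon)}\ge (n+1)f^\star$, while by the definition of $\bar d_0$ we have $S^{(0)}=(n+1)(\bar d_0 + f^\star)$. Substituting both facts into the telescoped inequality and dividing through by $(n+1)$ yields
\[
-\bar d_0 \;\le\; -N_r\,\frac{2\beta\gamma^2\varepsilon^2}{Ln(\kappa^1_{n,\beta})^2} + \psi_0,
\]
which rearranges to the reflection bound $N_r \le \frac{L(\bar d_0+\psi_0)n(\kappa^1_{n,\beta})^2}{2\beta\gamma^2\varepsilon^2}$. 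Adding the shrink–count bound $N_s < \frac{\log(\bar\delta/\delta_0)}{\log\gamma}$ from Lemma~\ref{lem:num-shrink-steps} and using $N_\varepsilon = N_r + N_s$ gives exactly \eqref{eq:general-wc}.

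Finally, the asymptotic rate follows by inspection: $\kappa^1_{n,\beta}=(\beta+1)n+\frac{\sqrt n}{2}=\mathcal{O}(n)$, so the dominant first term scales as $n\cdot(\kappa^1_{n,\beta})^2 = \mathcal{O}(n^3)$ over $\varepsilon^2$, while the logarithmic shrink term is of lower order; hence $N_\varepsilon = \mathcal{O}(n^3/\varepsilon^2)$. The only real subtlety, and thus the main obstacle to guard against, is ensuring the hypotheses of the component lemmas remain valid across $\{0,\dots,N_\varepsilon-1\}$ — in particular the uniform lower bound $\delta_k\ge\bar\delta$ from Lemma~\ref{lem:sufficient-reflect}, which is what licenses applying the single worst–case per–reflection decrease simultaneously to every reflection step in the sum.
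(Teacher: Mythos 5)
Your proposal is correct and follows essentially the same argument as the paper: telescope $S^{(k)}$, lower-bound the per-reflection decrease via Lemma~\ref{lem:min-decrease-reflection}, cap the aggregate shrinking ascent by $(n+1)\psi_0$ via Lemma~\ref{lem:max-ascent-all-shrink}, use $S^{(N_\varepsilon)}\ge(n+1)f^\star$ to solve for $N_r$, and add the $N_s$ bound from Lemma~\ref{lem:num-shrink-steps}. Your explicit identification of which lemmas supply each bound (and the role of the uniform lower bound $\delta_k\ge\bar\delta$) is if anything cleaner than the paper's citation of the lemmas in its proof.
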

	\begin{proof}
		Assume that among these $N$ iterations, the number of reflection steps and shrinking steps are $N_r$ and $N_s$, respectively. Due to Lemma~\ref{lem:num-shrink-steps}, we only need to bound $N_r$.
		Applying Lemma~\ref{lem:sufficient-reflect} and Lemma~\ref{lem:max-increase-shrink}, we conclude that
		\begin{align*}
			N_r \cdot (n+1)\dfrac{2\beta \gamma^2 \varepsilon^2}{L n (\kappa^1_{n,\beta})^2 } 
			&\leq S^{(0)}-S^{(N_\varepsilon -1)} +(n+1)\psi_0 \\
			&\leq S^{(0)}-(n+1)f^\star+(n+1)\psi_0 \\
			&= (n+1) (\bar{d}_0+\psi_0).
		\end{align*}
		Solving for $N_r$, we have $N_r\leq \dfrac{L( \bar{d}_0+\psi_0)n({\kappa^1_{n,\beta}})^2}{2\beta \gamma^2 \varepsilon^2}$. Combining with Lemma~\ref{lem:num-shrink-steps}, we have	
		\[
		N_\varepsilon = N_r+N_s \leq \dfrac{L( \bar{d}_0+\psi_0)n ({\kappa^1_{n,\beta}})^2 }{2\beta \gamma^2 \varepsilon^2} + \dfrac{\log(\bar{\delta}/\delta_0)}{\log \gamma}.
		\]
		This completes the proof.
	\end{proof}
	
	\begin{remark}
		Notice that $\kappa^1_{n,\beta} = (\beta+1)n +\sqrt{n}/{2} = \mathcal{O}(n)$ implies the $\mathcal{O}(n^3/\varepsilon^2)$ result if $\beta = \mathcal{O}(1)$.
	\end{remark}
	
	\begin{remark}
		The result above is actually \textit{iteration complexity}, while in DFO we also care about \textit{evaluation complexity}, which is the number of function value evaluations spent in the optimization process. Each reflection step uses only one function evaluation, while a shrinking step requires evaluating $n$ new vertices. Hence the total number of function evaluations is $N_r + n N_s$, which is still of the same order as the iteration complexity, up to an additive $\mathcal{O}(n\log(\delta_0/\bar\delta))$ term from the shrinking steps. Other results follow from the same argument.
	\end{remark}
	
	\subsection{Worst--Case Complexity under the PL Condition}
		\begin{assumption}\label{as2}
		In addition to Assumption~\ref{as1}, we further assume that \( f \) satisfies the Polyak--\L{}ojasiewicz (PL) condition. Namely, there exists a constant \( \mu > 0 \) such that:
		\begin{equation}\label{eq:PL}
			\dfrac{1}{2} \|\nabla f(\mathbf{x})\|^2 \geq \mu \bigl(f(\mathbf{x}) - f^\star\bigr), \quad \forall \mathbf{x} \in \R^n.
		\end{equation}
	\end{assumption}
	
	Under Assumption 2, we are able to improve the worst--case complexity to  $\mathcal{O}({n^2/\varepsilon^2})$ under the condition $\varepsilon < {1}/{n}$, which is reasonable since usually $\varepsilon < 10^{-5}$ and $n<10^3$.

	\subsubsection{Worst--Case Complexity (\Rmnum{2})}
	\begin{theorem}[Improved Worst--Case Complexity]\label{thm:pl-wc}
		Suppose that Assumption~\ref{as2} holds, $\delta_0 > \bar{\delta} $, the stopping condition is not satisfied yet and let \( \varepsilon < {1}/{n}\). Then the number of iterations in which \( \|\nabla f(\mathbf{c}_k)\| > \varepsilon \) is bounded above by:
		\[
		\mathcal{O} \left( \dfrac{n^2}{\varepsilon^2} \right).
		\]
	\end{theorem}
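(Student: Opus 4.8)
The plan is to bound the number of reflection steps $N_r$ separately, since Lemma~\ref{lem:num-shrink-steps} already caps the shrinking steps at $N_s < \log(\bar{\delta}/\delta_0)/\log\gamma = O(\log(n/\varepsilon))$, a lower-order term. Throughout I would exploit the two-sided sandwich on the gradient implied by Assumption~\ref{as2}: the PL inequality \eqref{eq:PL} gives $d^{(k)} = f(\mathbf{c}_k)-f^\star \le \frac{1}{2\mu}\norm{\nabla f(\mathbf{c}_k)}^2$, while $L$--smoothness gives $\norm{\nabla f(\mathbf{c}_k)}^2 \le 2L\,d^{(k)}$, so the central gap and the true gradient are equivalent up to the condition number. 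The target $\mathcal{O}(n^2/\varepsilon^2)$ asks us to shave exactly one factor of $n$ from the bound \eqref{eq:general-wc} of Theorem~\ref{thm:general-wc}, whose $n^3$ arises as an explicit factor $n$ times $(\kappa^1_{n,\beta})^2 = \Theta(n^2)$, the latter coming from the reciprocal squared floor $\bar{\delta}^{-2}$.

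The first substantive step is to replace the \emph{uniform} radius floor $\bar{\delta}$ by an \emph{adaptive} one driven by PL. Whenever iteration $k$ is a shrinking step the acceptance test \eqref{eq:suff-decrease} failed, so the contrapositive of Lemma~\ref{lem:sufficient-reflect} forces $\delta_k > \norm{\nabla f(\mathbf{c}_k)}/(L\kappa^1_{n,\beta})$; substituting this into the PL bound on $d^{(k)}$ gives $d^{(k)} < \frac{(L\kappa^1_{n,\beta})^2}{2\mu}\delta_k^2$ at every shrink. Because $\delta_k$ is nonincreasing and $S^{(k)}$ is monotone up to the total shrinking ascent $(n+1)\psi_0$ of Lemma~\ref{lem:max-ascent-all-shrink}, this estimate propagates to all iterates, so that $d^{(k)} \lesssim \frac{L^2(\kappa^1_{n,\beta})^2}{\mu}\delta_k^2$ throughout the run; equivalently, the radius never collapses far below the gradient scale, $\delta_k \gtrsim \sqrt{\mu\,d^{(k)}}/(L\kappa^1_{n,\beta})$. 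This is the precise statement that the simplex automatically tracks the local geometry, and it rules out the pathological ``small radius with large gap'' scenario that is responsible for the worst case in the general analysis.

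The second step feeds this adaptive lower bound into the descent accounting and sums. Using the minimum reflection decrease of Lemma~\ref{lem:min-decrease-reflection} in the form $\bar{d}^{(k)}-\bar{d}^{(k+1)} \ge \frac{2}{n}\beta L\delta_k^2$ together with the telescoped budget $\bar{d}_0 + \psi_0 = \mathcal{O}(1)$, the count reduces to controlling $\sum_k \delta_k^{-2}$, and hence to the effective floor on $\delta_k$ in the terminal phase; one may also sum the inequality $N_r\,\varepsilon^2 < \sum_k \norm{\nabla f(\mathbf{c}_k)}^2$. The regime $\varepsilon < 1/n$ enters here to guarantee that the curvature error $O(L\delta_k^2)$ in the worst--mean gap and the simplex-gradient error $\frac{\sqrt n}{2}L\delta_k$ of \eqref{eq:gra-error} are dominated by the genuine term $\norm{\nabla f(\mathbf{c}_k)} > \varepsilon$, so that each reflection still realizes a decrease of the claimed order. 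With a terminal floor at the scale $\delta_k \gtrsim \varepsilon/(L\sqrt n)$ this summation closes at $\mathcal{O}(n^2/\varepsilon^2)$, and combining with $N_s = O(\log(n/\varepsilon))$ yields the theorem.

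I expect the crux — the step that actually delivers the factor-$n$ saving — to be the interaction between this adaptive floor and the orientation-dependent gradient--gap inequality \eqref{eq:grad-gap-lower}. That inequality only guarantees $v^{(k)} \gtrsim \delta_k\norm{\nabla f(\mathbf{c}_k)}/n$, and it is tight up to a genuine factor of $n$ for adversarial orientations (the gradient pointing away from the best vertex); a naive substitution merely reproduces the $n$--scale threshold $\kappa^1_{n,\beta}=\Theta(n)$, the floor $\bar{\delta}\sim\varepsilon/(Ln)$, and hence only recovers $n^3/\varepsilon^2$. The hard part will be showing that under PL one may effectively work with a $\Theta(\sqrt n)$--scale threshold in place of $\kappa^1_{n,\beta}$ — i.e.\ an amortized or averaged sharpening of \eqref{eq:grad-gap-lower} that the convex structure of $f$ permits — so that the terminal floor improves to $\delta_k \gtrsim \varepsilon/(L\sqrt n)$; controlling the error terms within exactly this $\sqrt n$ window is where the assumption $\varepsilon < 1/n$ is indispensable.
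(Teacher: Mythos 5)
There is a genuine gap at exactly the point you flag as ``the crux.'' Your plan ultimately rests on replacing the $\Theta(n)$--scale constant $\kappa^1_{n,\beta}$ in \eqref{eq:grad-gap-lower} by a $\Theta(\sqrt n)$--scale one so that the terminal radius floor improves to $\varepsilon/(L\sqrt n)$; you correctly note that \eqref{eq:grad-gap-lower} is tight at the $n$ scale for adversarial orientations, and you do not supply the ``amortized sharpening'' that would be needed (you also appeal to ``the convex structure of $f$,'' but Assumption~\ref{as2} is only PL, not convexity). No such sharpening is available or needed: the paper keeps the floor $\bar\delta=\gamma\varepsilon/(L\kappa^1_{n,\beta})=\Theta(\varepsilon/(Ln))$ untouched. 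The factor of $n$ is saved by a two--phase split at gradient level $\sqrt n\,\varepsilon$. In Phase~I, while $\|\nabla f(\mathbf{c}_k)\|>\sqrt n\,\varepsilon$, Lemma~\ref{lem:sufficient-reflect} forces $\delta_k\ge \gamma\sqrt n\,\varepsilon/(L\kappa^1_{n,\beta})$, i.e.\ $\delta_k^2$ is a factor $n$ larger than $\bar\delta^2$, so each accepted reflection decreases the average value by $\Omega(\varepsilon^2/(\kappa^1_{n,\beta})^2)$ rather than $\Omega(\varepsilon^2/(n(\kappa^1_{n,\beta})^2))$; repeating the budget argument of Theorem~\ref{thm:general-wc} with tolerance $\sqrt n\,\varepsilon$ gives $\mathcal{O}(n^2/\varepsilon^2)$ with no new inequality. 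In Phase~II, PL is used for the only time: once $\|\nabla f(\mathbf{c}_{N^1+1})\|\le\sqrt n\,\varepsilon$, \eqref{eq:PL} bounds the \emph{remaining} average gap by $\mathcal{O}(n\varepsilon^2/\mu)$ (plus the centroid--mean correction \eqref{eq:centroid-err} and the residual shrinking ascent), so even with the crude terminal decrease $\Omega(\varepsilon^2/n^3)$ per reflection the tail costs only an $\varepsilon$--independent $\mathcal{O}(n^4)$ iterations.

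This also means your reading of the hypothesis $\varepsilon<1/n$ is off: it is not there to control error terms inside a $\sqrt n$ window, but simply to absorb the $\mathcal{O}(n^4)$ tail into $\mathcal{O}(n^2/\varepsilon^2)$. Separately, your first step (the adaptive floor $\delta_k\gtrsim\sqrt{\mu\,d^{(k)}}/(L\kappa^1_{n,\beta})$ at rejected iterations) is a sound observation and could be pushed to a linear--rate bound of order $n^3\log(1/\varepsilon)$, which also lies below $n^2/\varepsilon^2$ when $\varepsilon<1/n$; but as written you neither complete that route (the propagation of the floor between shrinks and the summation ``$N_r\varepsilon^2<\sum_k\|\nabla f(\mathbf{c}_k)\|^2$'' are not justified) nor the $\sqrt n$--sharpening route, so the argument does not close.
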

	
	\begin{proof}
		We claim that the number of iterations satisfying \( \delta_k \geq \dfrac{\gamma \sqrt{n}\varepsilon}{L(\kappa^1_{n,\beta}) } \) is bounded above by $\mathcal{O} \left( {n^2/\varepsilon^2} \right).$ Notice that $\delta_k$ is non--increasing as $k$ increases. Suppose that all iterations up to $k=\bar{N}$ satisfy \( \delta_k \geq\dfrac{\gamma \sqrt{n}\varepsilon}{L(\kappa^1_{n,\beta}) } \) and \( \delta_{\bar{N}+1} < \dfrac{\gamma \sqrt{n}\varepsilon}{L(\kappa^1_{n,\beta}) } \).
		Analogous to Lemma~\ref{lem:min-decrease-reflection}, for every reflection step $k<\bar{N}$, we have:
		\[
		S^{(k+1)} - S^{(k)} \leq -(n+1)\dfrac{2\beta \gamma^2 \varepsilon^2}{L(\kappa^1_{n,\beta})^2}.
		\] 
		Suppose \(\bar{N}_r\) is the number of reflection steps up to \(\bar{N}\), with the corresponding number of shrinking steps \(\bar{N}_s\). Then
		\[
		\bar{N}_r  (n+1)\dfrac{2\beta \gamma^2 \varepsilon^2}{L(\kappa^1_{n,\beta})^2 } \leq (n+1) (d^{(0)}+\psi_0),
		\]
		which implies
		\[
		\bar{N}_r \leq \dfrac{L( d^{(0)}+\psi_0)(\kappa^1_{n,\beta})^2 }{2\beta \gamma^2 \varepsilon^2}.
		\]
		Notice that \(\bar{N}_s<\dfrac{\log(\bar{\delta}/\delta_0)}{\log \gamma}\), we conclude
		\[
		\bar{N} \leq \dfrac{L( d^{(0)}+\psi_0) {(\kappa^1_{n,\beta})^2} }{2\beta \gamma^2 \varepsilon^2}+\dfrac{\log(\bar{\delta}/\delta_0)}{\log \gamma}.
		\]
		
		Our claim holds, thus without loss of generality we assume iteration started from $\delta_0 < \dfrac{\gamma \sqrt{n}\varepsilon}{L(\kappa^1_{n,\beta}) }$.  It follows that the number of iterations satisfying \( \|\nabla f(\mathbf{c}_k)\| > \sqrt{n} \varepsilon \) for all $k$ is bounded above by: $\mathcal{O} \left( {n^2/\varepsilon^2} \right).$

		Suppose that the first iteration that $\|\nabla f(\mathbf{c}_k)\|$  dropped below $\sqrt{n}$ is indexed by \( N^1 + 1\), and let \( N^2 \) denote the remaining number of iterations until. \(N^2_r\) and \(N^2_s\) still represent corresponding reflection steps and shrinking steps.

		By \eqref{eq:PL}, we have
		\[
		f(\mathbf{c}_{N^1+1}) - f^\star \leq \dfrac{n \varepsilon^2}{2\mu}.
		\]
		From the interpolation error bound \eqref{eq:centroid-err}, for all $k$ we obtain
		\begin{align*}
			\left| f(\mathbf{c}_k) - \hat{f}(\mathbf{c}_k) \right|
			&= \left| f(\mathbf{c}_k) - \dfrac{1}{n+1} \sum_{i=1}^{n+1} f(\mathbf{x}_i^{(k)}) \right| \leq \dfrac{L\delta_k^2}{2} 
			\leq \dfrac{\gamma^2 n \varepsilon^2}{2L(\kappa^1_{n,\beta})^2}.
		\end{align*}
		The ascent due to all remaining shrinking steps is at most $(n+1)\psi_1$, where
		\[
		\psi_1 < \dfrac{L \gamma}{1 + \gamma} \delta_{N^1+1}^2< \dfrac{\gamma^3 n \varepsilon^2}{(1+\gamma)L(\kappa^1_{n,\beta})^2}.
		\]
		Consequently,
		\begin{align*}
			S^{(N^1+1)} - (n+1) f^\star
			&= (n+1)\!\left(\dfrac{1}{n+1} \sum_{i=1}^{n+1} f(\mathbf{x}_i^{(N^1+1)})-f^\star\right)\\
			&\le (n+1) \left(\left| f(\mathbf{c}_{N^1+1}) - \dfrac{1}{n+1} \sum_{i=1}^{n+1} f(\mathbf{x}_i^{(N^1+1)}) \right| + \left| f(\mathbf{c}_{N^1+1}) - f^\star \right| \right)\\
			&<(n+1) \left(\dfrac{\gamma^2 n \varepsilon^2}{2L(\kappa^1_{n,\beta})^2}+\dfrac{n\varepsilon^2}{2\mu}\right).
		\end{align*}
		By Lemma~\ref{lem:min-decrease-reflection}, we obtain 
		\begin{equation}\label{eq:K-sum-bound}
			S^{(N^1+1)}-(n+1)f^\star +(n+1)\psi_1\geq N^2_r (n+1)\dfrac{2\beta \gamma^2 \varepsilon^2}{L(\kappa^1_{n,\beta})^2 n }.
		\end{equation}
		It follows that
		\begin{equation}\label{eq:V-bound}
			N^2_r \leq \dfrac{L n^2}{4\beta^2} + \dfrac{L(\kappa^1_{n,\beta})^2n^2}{4\gamma^2\mu\beta}+ \dfrac{\gamma n^2}{2(1+\gamma)\beta} \in \mathcal{O}(n^4).
		\end{equation}
		Moreover, $N^2_s \leq \mathcal{O}(\log(\delta_{N^1}/\bar\delta))$, we conclude that when \( \varepsilon < {1}/{n} \), 
		\[
		N^2 \in\mathcal{O} \left( \dfrac{n^2}{\varepsilon^2} \right),
		\]
		which implies the stated worst--case complexity under the PL condition.
	\end{proof}
	
	\begin{remark}
		Actually, the complexity is $\mathcal{O}\left({n^2}/{\varepsilon^2}+n^4\right)$, where the first term is contributed by the phase with $\|\nabla f(\mathbf{c}_k)\| > \sqrt{n} \varepsilon$ and the final phase contributes at most an $\varepsilon$--independent  $\mathcal{O}(n^4)$ term (we omit the $\log$ term from shrinking here). The overall complexity will be $\mathcal{O} \left( {n^2}/{\varepsilon^2} \right)$ if $\varepsilon < {1}/{n}$.
	\end{remark}
	
	\begin{remark} \label{rm:7}
		If $f(\mathbf{x})$ satisfies the PL condition, we can establish the complexity of Algorithm~\ref{alg2} in terms of function value convergence. Specifically, let $\mathbf{x}_{N_\varepsilon}$ be the first iterate satisfying  $f(\mathbf{x}_{N_\varepsilon}) - f^\star \leq \varepsilon$, then the number of iterations satisfies $N_\varepsilon \in \mathcal{O}\left(n^3/{\varepsilon}\right)$. Using the same trick leads to $\mathcal{O}\left({n^2}/{\varepsilon}\right)$ when $\varepsilon <{1}/{n}$.
	\end{remark}
	
	\section{Complexity on Convex and Strongly Convex Smooth Functions}\label{sec:convex}
	For convex and strongly convex smooth functions, we analyze the complexity of Algorithm~\ref{alg2} in finding an $\varepsilon$--optimal solution.
	
	Let $N_\varepsilon$ be the first iteration such that the algorithm stops. Akin to Remark \ref{rm:7},
	\[  N_\varepsilon = \arg\min_k \left\{k \in \mathbb{N} ~\middle|~ \bar{d}^{(k)} \leq \varepsilon \right\}. 
	\] 
		
	\begin{assumption}\label{as3}
		In addition to Assumption~\ref{as1}, we assume that \( f(\mathbf{x}) \) is convex. Namely, for all $\mathbf{x}$ and $\mathbf{y} \in \R^n$, 
		\begin{equation}
			f(\mathbf{y}) \geq f(\mathbf{x}) + \langle \nabla f(\mathbf{x}), \mathbf{y} - \mathbf{x} \rangle.
		\end{equation}
		We also assume the sublevel set 
	\[
	\mathcal{L}_{\mathrm{avg},0}:=\Bigl\{\mathbf{x}\in\mathbb{R}^n:\ f(\mathbf{x})\le \dfrac{1}{n+1}\sum_{i=1}^{n+1}f(\mathbf{x}_i^{(0)})\Bigr\}.
	\]
		has a finite radius $R$, i.e., $\|\mathbf{x} - \mathbf{x}^\star\| \leq R$ for all $\mathbf{x} \in \mathcal{L}_{\mathrm{avg},0}$.
	\end{assumption}

	\subsection{Worst--Case Complexity under Convexity Assumption}
	We analyze $\bar{d}^{(k)}$ hereafter. Notice that convexity implies that $d^{(k)} \leq \bar{d}^{(k)}$ for all $k$. We also assume $\beta \geq \dfrac{1}{2}$ for simplicity. We first show that by convexity, shrinking steps are non--increasing for $d^{(k)}$.
	
	\begin{lemma}[Shrinking is Non--Increasing]\label{lem:convex-shrink-noninc}
		If iteration $k$ fails and performs a shrinking step with
		$\mathbf{x}_i^{(k+1)}=\gamma \mathbf{x}_i^{(k)}+(1-\gamma)\mathbf{x}_1^{(k)}$ for $i=2,\dots,n+1$, then
		\begin{equation}\label{eq:conv-shrink-noninc}
			S^{(k+1)}-S^{(k)}\ \le\ 0,
			\qquad\text{and equivalently}\qquad
			\bar d^{(k+1)}\ \le\ \bar d^{(k)}.
		\end{equation}
	\end{lemma}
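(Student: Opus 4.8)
The plan is to bound $S^{(k+1)}$ by applying convexity to each shrunken vertex and then exploit the vertex ordering. First I would observe that the best vertex is fixed under shrinking, so $\mathbf{x}_1^{(k+1)}=\mathbf{x}_1^{(k)}$, while for $i=2,\dots,n+1$ the updated vertex is the convex combination $\mathbf{x}_i^{(k+1)}=\gamma\mathbf{x}_i^{(k)}+(1-\gamma)\mathbf{x}_1^{(k)}$. Convexity of $f$ (Assumption~\ref{as3}) then gives, for each such $i$,
\[
f(\mathbf{x}_i^{(k+1)})\ \le\ \gamma f(\mathbf{x}_i^{(k)})+(1-\gamma)f(\mathbf{x}_1^{(k)}).
\]
The point worth highlighting is that, unlike the nonconvex bound \eqref{eq:shrink-err}, no quadratic error term $L\delta_k^2$ appears here: convexity lets us discard that correction entirely, and this is exactly what makes the shrinking step monotone rather than merely controllably ascending.

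Summing this inequality over $i=2,\dots,n+1$ and adding the fixed term $f(\mathbf{x}_1^{(k)})$ yields
\[
S^{(k+1)}\ \le\ f(\mathbf{x}_1^{(k)})+\gamma\sum_{i=2}^{n+1}f(\mathbf{x}_i^{(k)})+(1-\gamma)\,n\,f(\mathbf{x}_1^{(k)}).
\]
Subtracting $S^{(k)}=\sum_{i=1}^{n+1}f(\mathbf{x}_i^{(k)})$ and collecting terms, the claim $S^{(k+1)}-S^{(k)}\le 0$ reduces to
\[
S^{(k+1)}-S^{(k)}\ \le\ (1-\gamma)\Bigl(n\,f(\mathbf{x}_1^{(k)})-\sum_{i=2}^{n+1}f(\mathbf{x}_i^{(k)})\Bigr),
\]
so after dividing by the positive factor $1-\gamma$ it suffices to establish the single inequality $n\,f(\mathbf{x}_1^{(k)})\le\sum_{i=2}^{n+1}f(\mathbf{x}_i^{(k)})$.

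This final inequality is immediate from the sorting convention $f(\mathbf{x}_1^{(k)})\le f(\mathbf{x}_i^{(k)})$ for every $i$: summing the $n$ inequalities $f(\mathbf{x}_1^{(k)})\le f(\mathbf{x}_i^{(k)})$ over $i=2,\dots,n+1$ gives exactly the needed bound. The equivalence with $\bar d^{(k+1)}\le\bar d^{(k)}$ follows at once from the definition $\bar d^{(k)}=\tfrac{1}{n+1}S^{(k)}-f^\star$, whence $\bar d^{(k+1)}-\bar d^{(k)}=\tfrac{1}{n+1}\bigl(S^{(k+1)}-S^{(k)}\bigr)$ has the same sign. I do not anticipate a genuine obstacle; the only points requiring care are that the reduction divides by $1-\gamma>0$ (so the inequality direction is preserved) and that the best vertex really does remain unchanged during shrinking, which is precisely what lets the sorting supply the required bound.
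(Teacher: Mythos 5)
Your proof is correct and follows essentially the same route as the paper: apply convexity to each shrunken vertex to drop the $L\delta_k^2$ error term, sum over $i=2,\dots,n+1$, and use the sorting convention $f(\mathbf{x}_1^{(k)})\le f(\mathbf{x}_i^{(k)})$ (equivalently $(n+1)f(\mathbf{x}_1^{(k)})\le S^{(k)}$, which is how the paper phrases the final step) to conclude $S^{(k+1)}\le S^{(k)}$. The only difference is cosmetic bookkeeping in how the terms are collected.
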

	
	\begin{proof}
		For $i=2,\dots,n+1$, convexity gives
		$f(\mathbf{x}_i^{(k+1)}) \le \gamma f(\mathbf{x}_i^{(k)})+(1-\gamma) f(\mathbf{x}_1^{(k)})$.
		Summing this inequality over $i=2,\dots,n+1$ we obtain
		\[
			S^{(k+1)} \leq \gamma S^{(k)} + (n+1)(1-\gamma)\mathbf{x}_1^{(k)} \leq S^{(k)}.
		\]
	\end{proof}
	
	We can also rewrite the sufficient decrease condition \eqref{eq:suff-decrease} for simplicity; when iteration $k$ is successful,
	\begin{equation}\label{eq:conv-ref-drop}
		S^{(k+1)}-S^{(k)}\ \le\ -(n+1)\,\dfrac{2\beta}{n}\,L\,\delta_k^2
		\qquad\Longleftrightarrow\qquad
		\bar d^{(k+1)}\ \le\ \bar d^{(k)}\ -\ \dfrac{2\beta}{n}\,L\,\delta_k^2.
	\end{equation}
	
	\subsubsection{Gradient Lower Bound}
	Rewrite \eqref{eq:centroid-err-cvx} as:
	\begin{equation}\label{eq:conv-centroid-mean}
		d^{(k)}\ \ge\ \bar d^{(k)}-\dfrac{L}{2}\,\delta_k^2.
	\end{equation}
	
	We can demonstrate a lower bound for $\|\nabla f(\mathbf{c}_k)\|$ using the radius $R$ of the sublevel set as in Assumption~\ref{as3}.
	\begin{lemma}[Gradient Lower Bound with Centroid--Mean Correction]\label{lem:grad-lb-correct}
		Under Assumption~\ref{as3}, for all $k$,
		\begin{equation}\label{eq:conv-gradlb}
			\|\nabla f(\mathbf{c}_k)\|\ \ge\ \dfrac{d^{(k)}}{R}\ \ge\ \dfrac{\bar d^{(k)}-\dfrac{L}{2}\delta_k^2}{R}.
		\end{equation}
	\end{lemma}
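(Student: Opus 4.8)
The plan is to prove the two inequalities separately. The second inequality, $\tfrac{d^{(k)}}{R}\ge \tfrac{\bar d^{(k)}-\frac{L}{2}\delta_k^2}{R}$, is immediate: dividing the centroid--mean correction \eqref{eq:conv-centroid-mean} by $R>0$ yields it at once. All the substance lies in the first inequality, $\|\nabla f(\mathbf{c}_k)\|\ge d^{(k)}/R$, which is a standard convex subgradient estimate whose only delicate ingredient is the geometric bound $\|\mathbf{c}_k-\mathbf{x}^\star\|\le R$.

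For the first inequality I would begin from the convexity (gradient) inequality of Assumption~\ref{as3} evaluated at $\mathbf{c}_k$ against a minimizer $\mathbf{x}^\star$ (which exists by Assumption~\ref{as1}):
\[
f^\star = f(\mathbf{x}^\star)\ \ge\ f(\mathbf{c}_k)+\ip{\nabla f(\mathbf{c}_k)}{\mathbf{x}^\star-\mathbf{c}_k}.
\]
Rearranging and applying Cauchy--Schwarz gives
\[
d^{(k)} = f(\mathbf{c}_k)-f^\star\ \le\ \ip{\nabla f(\mathbf{c}_k)}{\mathbf{c}_k-\mathbf{x}^\star}\ \le\ \|\nabla f(\mathbf{c}_k)\|\,\|\mathbf{c}_k-\mathbf{x}^\star\|.
\]
It then suffices to show $\|\mathbf{c}_k-\mathbf{x}^\star\|\le R$, i.e., that $\mathbf{c}_k$ lies in the sublevel set $\mathcal{L}_{\mathrm{avg},0}$ of Assumption~\ref{as3}.

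To place $\mathbf{c}_k$ in $\mathcal{L}_{\mathrm{avg},0}$, I would combine two facts. First, Jensen's inequality (convexity applied to $\mathbf{c}_k$ as the uniform convex combination of the $n+1$ vertices) gives $f(\mathbf{c}_k)\le \tfrac{1}{n+1}\sum_{i=1}^{n+1}f(\mathbf{x}_i^{(k)})$. Second, the mean vertex value equals $\tfrac{1}{n+1}S^{(k)}$ and is non--increasing in $k$: shrinking steps satisfy $S^{(k+1)}\le S^{(k)}$ by Lemma~\ref{lem:convex-shrink-noninc}, while successful reflection steps satisfy $S^{(k+1)}-S^{(k)}\le -(n+1)\tfrac{2\beta}{n}L\delta_k^2<0$ by \eqref{eq:conv-ref-drop}; hence $S^{(k)}$ never exceeds its initial value $S^{(0)}$. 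Chaining these,
\[
f(\mathbf{c}_k)\ \le\ \dfrac{1}{n+1}\sum_{i=1}^{n+1}f(\mathbf{x}_i^{(k)})\ \le\ \dfrac{1}{n+1}\sum_{i=1}^{n+1}f(\mathbf{x}_i^{(0)}),
\]
so $\mathbf{c}_k\in\mathcal{L}_{\mathrm{avg},0}$, and Assumption~\ref{as3} gives $\|\mathbf{c}_k-\mathbf{x}^\star\|\le R$. Substituting back into the Cauchy--Schwarz bound closes the argument.

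The main obstacle---indeed the only nontrivial point---is verifying the membership $\mathbf{c}_k\in\mathcal{L}_{\mathrm{avg},0}$, because the radius $R$ is pinned to the \emph{fixed initial} average function value rather than to any current quantity. This hinges entirely on the monotonicity of $S^{(k)}$ across both operation types, which is why both Lemma~\ref{lem:convex-shrink-noninc} and the convex form \eqref{eq:conv-ref-drop} of the sufficient decrease condition are invoked; once monotonicity is in hand, the remainder is the routine convexity-plus-Cauchy--Schwarz estimate.
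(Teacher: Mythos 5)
Your proposal is correct and follows essentially the same route as the paper's proof: convexity plus Cauchy--Schwarz against a minimizer, membership $\mathbf{c}_k\in\mathcal{L}_{\mathrm{avg},0}$ via Jensen's inequality and the monotonicity of $S^{(k)}$, and then \eqref{eq:conv-centroid-mean} for the second inequality. You merely spell out the monotonicity of the average vertex value across both step types in more detail than the paper does, which is a welcome clarification rather than a deviation.
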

	
	\begin{proof}
		Convexity gives $d^{(k)}\le \|\nabla f(\mathbf{c}_k)\|\,\|\mathbf{c}_k-\mathbf{x}^\star\|$ for  $\mathbf{x}^\star\in\arg\min f$. 
		
		Notice that $f(\mathbf{c}_k)\le \frac{1}{n+1}\sum_{i=1}^{n+1}f(\mathbf{x}_i^{(k)})\le \frac{1}{n+1}\sum_{i=1}^{n+1}f(\mathbf{x}_i^{(0)})$, thus $\mathbf{c}_k \in \mathcal{L}_{\mathrm{avg},0}$.   
		
		Take $\|\mathbf{c}_k-\mathbf{x}^\star\|\le R$ to get the first inequality; combine with \eqref{eq:conv-centroid-mean} for the second.
	\end{proof}
	
	\subsubsection{Sufficient Condition for Reflection (Convex Case)}
	
	\begin{lemma}[Gradient Upper Bound at a Shrinking Step]\label{lem:grad-rej}
		If the reflection at iteration $k$ is rejected (i.e., \eqref{eq:suff-decrease} fails), then the gradient at the centroid is bounded by:
		\begin{equation}\label{eq:up-bound-convex}
			\|\nabla f(c_k)\| \leq \kappa^2_{n,\beta} L \delta_k.
		\end{equation}
		where $\kappa^2_{n,\beta}:= (\beta-\dfrac{1}{2})n + \dfrac{\sqrt{n}}{2}-\dfrac{1}{2}$.
	\end{lemma}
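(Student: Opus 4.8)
The plan is to mirror the argument of Lemma~\ref{lem:sufficient-reflect}, replacing its nonconvex extrapolation bound \eqref{eq:gr-plus-bound} by the sharper convex one \eqref{eq:refl-err-cvx}. By definition, a rejected reflection is a failure of the sufficient--decrease test \eqref{eq:suff-decrease}, i.e. $v_r^{(k)}-v^{(k)}>-\tfrac{2n+2}{n}\beta L\delta_k^2$. The first step is to extract from this a bound on the worst--mean gap $v^{(k)}$ alone: writing the convex reflection bound as $|v_r^{(k)}+v^{(k)}|\le(1+\tfrac1n)^2L\delta_k^2$ and adding it to the failed test cancels $v_r^{(k)}$ and bounds $2v^{(k)}$ above by a multiple of $L\delta_k^2$ whose coefficient combines $\beta$ with the convex reflection constant $(1+\tfrac1n)^2$.

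Second, I would convert this bound on $v^{(k)}$ into the desired bound on $\|\nabla f(\mathbf{c}_k)\|$ exactly as \eqref{eq:grad-gap-lower} is obtained: the simplex--gradient estimate $\|\nabla\hat f(\mathbf{c}_k)\|\le\tfrac{n}{\delta_k}\cdot\tfrac{n}{n+1}v^{(k)}$, together with the triangle inequality and the centroid gradient error \eqref{eq:gra-error} in the form $\|\nabla f(\mathbf{c}_k)-\nabla\hat f(\mathbf{c}_k)\|\le\tfrac{\sqrt n}{2}L\delta_k$, yields $\|\nabla f(\mathbf{c}_k)\|\le\tfrac{n^2}{(n+1)\delta_k}v^{(k)}+\tfrac{\sqrt n}{2}L\delta_k$. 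Substituting the bound on $v^{(k)}$ and collecting the coefficients of $L\delta_k$ then produces a constant of the advertised shape $an+\tfrac{\sqrt n}{2}+b$, which I would simplify toward $\kappa^2_{n,\beta}$ and read off as \eqref{eq:up-bound-convex}.

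The delicate point---and what I expect to be the main obstacle---is that this naive substitution alone does not reach $\kappa^2_{n,\beta}=(\beta-\tfrac12)n+\tfrac{\sqrt n}{2}-\tfrac12$: the purely nonconvex bookkeeping, even after inserting the smaller convex reflection constant, leaves a constant with coefficient $(\beta+\tfrac12)$ rather than $(\beta-\tfrac12)$ in front of $n$. Closing this $O(n)$ gap requires genuine use of convexity, not merely the magnitude of the error bound. The route I would pursue is to exploit that the convex extrapolation error is one--sided/asymmetric (bounded below by $-\tfrac{L}{2}\tr(G_+)$ and above by $\tfrac{L}{2}\tr(G_-)$ for the geometry matrix $G$ of the reflection query, cf.\ \eqref{eq:defG}) and to combine it with the signed centroid bound \eqref{eq:centroid-err-cvx} and Jensen's inequality at the subcentroid $\tfrac1n\sum_{i=1}^n\mathbf{x}_i^{(k)}$, so as to force the error term to enter the estimate of $v^{(k)}$ with the favorable sign and thereby downgrade the coefficient from $(\beta+1)$ to $(\beta-\tfrac12)$. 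I would spend most of the effort verifying this sign accounting, and checking that the standing assumption $\beta\ge\tfrac12$ keeps $\kappa^2_{n,\beta}\ge0$, so that \eqref{eq:up-bound-convex} is a meaningful upper bound.
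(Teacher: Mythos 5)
Your first two paragraphs reproduce exactly the argument the paper itself gives for this lemma (the paper only sketches it as ``an almost identical derivation'' to Lemma~\ref{lem:sufficient-reflect}, combining the failed test \eqref{eq:suff-decrease}, the convex reflection bound \eqref{eq:refl-err-cvx}, and \eqref{eq:grad-gap-lower}), and your arithmetic is correct: adding the failed test to the reflection bound cancels $v_r^{(k)}$ and gives $2v^{(k)}\le\bigl(\tfrac{2(n+1)}{n}\beta+\tfrac{(n+1)^2}{n^2}\bigr)L\delta_k^2$, and feeding this into \eqref{eq:grad-gap-lower} yields
\[
\|\nabla f(\mathbf{c}_k)\|\ \le\ \Bigl[(\beta+\tfrac12)n+\tfrac{\sqrt n}{2}+\tfrac12\Bigr]L\delta_k,
\]
not the advertised $\kappa^2_{n,\beta}=(\beta-\tfrac12)n+\tfrac{\sqrt n}{2}-\tfrac12$.

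The gap is in your third paragraph: the plan to recover the missing $-(n+1)$ by exploiting the one--sidedness of the convex error bounds cannot succeed, because the stated inequality is false as written. Counterexample: take $n=2$, $\beta=\tfrac12$ (so $\kappa^2_{2,1/2}=\tfrac{\sqrt2}{2}-\tfrac12\approx0.207$), $f(\mathbf{x})=\tfrac{L}{2}\|\mathbf{x}\|^2$, and the regular simplex with centroid $\mathbf{c}_k=(\delta_k,0)$ and vertices $\mathbf{c}_k+\delta_k(\cos\theta,\sin\theta)$ for $\theta\in\{60^\circ,-60^\circ,180^\circ\}$. The best vertex is the origin; the other two tie for worst with value $\tfrac32L\delta_k^2$, and the reflection of either worst vertex again has value $\tfrac32L\delta_k^2$, so $f(\mathbf{x}_r^{(k)})-f(\mathbf{x}_{n+1}^{(k)})=0>-\tfrac{2n+2}{n}\beta L\delta_k^2=-\tfrac32L\delta_k^2$ and the step is rejected, while $\|\nabla f(\mathbf{c}_k)\|=L\delta_k$, far above $0.207\,L\delta_k$. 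Hence no sign accounting, Jensen step, or use of the asymmetric $\tr(G_\pm)$ bounds can rescue \eqref{eq:up-bound-convex}; the defect lies in the lemma statement (and in the paper's own proof, which asserts the constant without deriving it), not in your derivation. The provable conclusion is the bound with $(\beta+\tfrac12)n+\tfrac{\sqrt n}{2}+\tfrac12$; note this is $\Theta(n)$ even at $\beta=\tfrac12$, whereas the paper's later $\mathcal{O}(n^2/\varepsilon)$ convex rate relies on $\kappa^2_{n,1/2}=\mathcal{O}(\sqrt n)$, so the discrepancy propagates to the downstream complexity constants and should be flagged rather than papered over.
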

	
	\begin{proof}
		Akin to Lemma~\ref{lem:sufficient-reflect}, rejection implies that 
		\begin{align*}
			v_r^{(k)} - v^{(k)} &\geq -\dfrac{2n+2}{n}\beta L \delta_k^2, \\
			\left| v_r^{(k)} + v^{(k)} \right| &\leq \left(1+\dfrac{1}{n}\right)^2 L \delta_k^2, \\
			\dfrac{n}{n+1}v^{(k)} &\geq \dfrac{\delta_k}{n} \left( \|\nabla f(\mathbf{c}_k)\| - \dfrac{\sqrt{n}}{2} L \delta_k \right).
		\end{align*}
		
		Performing an almost identical derivation, we can obtain \eqref{eq:up-bound-convex}.
	\end{proof}
	
	From the lower bound of $\|\nabla f(\mathbf{c}_k)\|$ for all $k$ and upper bound of $\|\nabla f(\mathbf{c}_k)\|$ for shrinking steps, we can choose $\delta_k$ small enough than $\bar{d}^{(k)}$ such that the lower bound is greater than the upper bound. The contradiction shows that such $k$ will always be a reflection step. Such description about \emph{how small} $\delta_k$ is compared with $\bar{d}^{(k)}$ is defined below as the \emph{adaptive tail radius}.
	
	\begin{definition}
		Fix $\eta\in(0,1)$, the \emph{adaptive tail radius} is
			\begin{equation}\label{eq:conv-delta-tail}
			\delta_{\rm tail}(\bar d)\ :=\ \min\!\left\{\,\dfrac{(1-\eta)\,\bar d}{2\,\kappa^2_{n,\beta}\,L\,R}\ ,\ \sqrt{\dfrac{(1-\eta)\,\bar d}{L}}\,\right\}.
		\end{equation}
	\end{definition}

	Set the \emph{branch threshold}
	\begin{equation}\label{eq:conv-dthr}
		\bar d_{\rm thr}\ :=\ \dfrac{4\,(\kappa^2_{n,\beta})^2\,L\,R^2}{1-\eta}.
	\end{equation}
	Then, $\delta_{\rm tail}(\bar d)=\dfrac{(1-\eta)\bar d}{2\kappa^2_{n,\beta}LR}$ when $\bar d\le \bar d_{\rm thr}$ (first branch),
	and $\delta_{\rm tail}(\bar d)=\sqrt{\dfrac{(1-\eta)\bar d}{L}}$ when $\bar d>\bar d_{\rm thr}$ (second branch).
	
	\begin{lemma}[Sufficient Condition for Reflection]\label{lem:conv-tail-accept}
		If $\delta_k\le \delta_{\rm tail}(\bar d^{(k)})$, then iteration $k$ \emph{cannot} be rejected and must be a reflection step.
	\end{lemma}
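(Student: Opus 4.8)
The plan is to argue by contradiction, pitting the gradient lower bound of Lemma~\ref{lem:grad-lb-correct} against the gradient upper bound of Lemma~\ref{lem:grad-rej}. Suppose, contrary to the claim, that iteration $k$ is rejected. Then Lemma~\ref{lem:grad-rej} gives $\|\nabla f(\mathbf{c}_k)\|\le \kappa^2_{n,\beta}L\delta_k$, while Lemma~\ref{lem:grad-lb-correct} gives $\|\nabla f(\mathbf{c}_k)\|\ge \big(\bar d^{(k)}-\tfrac{L}{2}\delta_k^2\big)/R$. Chaining these two inequalities eliminates the gradient norm and yields the purely scalar relation
\[
\bar d^{(k)}\ \le\ \tfrac{L}{2}\,\delta_k^2\ +\ \kappa^2_{n,\beta}\,L\,R\,\delta_k .
\]
The whole task then reduces to showing that the hypothesis $\delta_k\le \delta_{\rm tail}(\bar d^{(k)})$ makes the right-hand side \emph{strictly} smaller than $\bar d^{(k)}$, which contradicts this relation and forces iteration $k$ to be a reflection step.

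For that final step I would avoid any branch-by-branch case analysis and instead exploit that $\delta_{\rm tail}$ in \eqref{eq:conv-delta-tail} is a minimum of two quantities: $\delta_k\le\delta_{\rm tail}(\bar d^{(k)})$ simultaneously forces both $\delta_k\le (1-\eta)\bar d^{(k)}/(2\kappa^2_{n,\beta}LR)$ and $\delta_k\le \sqrt{(1-\eta)\bar d^{(k)}/L}$. The first bound controls the linear term, $\kappa^2_{n,\beta}LR\,\delta_k\le \tfrac{1}{2}(1-\eta)\bar d^{(k)}$, and the second controls the quadratic correction, $\tfrac{L}{2}\delta_k^2\le \tfrac{1}{2}(1-\eta)\bar d^{(k)}$. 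Adding the two yields $\tfrac{L}{2}\delta_k^2+\kappa^2_{n,\beta}LR\,\delta_k\le (1-\eta)\bar d^{(k)}$, which is strictly below $\bar d^{(k)}$ because $\eta\in(0,1)$ and $\bar d^{(k)}>0$. This is the desired contradiction.

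I do not anticipate a serious obstacle here; the one genuine idea is that $\delta_{\rm tail}$ is engineered as the minimum of exactly these two expressions, so that the linear and quadratic terms each absorb half of the available slack $(1-\eta)\bar d^{(k)}$. The only points needing care are the strictness of the final inequality and the degenerate case $\bar d^{(k)}=0$. For strictness I would record that $\eta>0$ gives $(1-\eta)\bar d^{(k)}<\bar d^{(k)}$ whenever $\bar d^{(k)}>0$ (and $\bar d^{(k)}>\varepsilon>0$ before the algorithm terminates); in the degenerate case $\bar d^{(k)}=0$ one has $\delta_{\rm tail}(0)=0$, so the hypothesis $\delta_k\le 0$ cannot hold for a genuine positive-radius simplex and the statement is vacuously true. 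The branch threshold $\bar d_{\rm thr}$ of \eqref{eq:conv-dthr} plays no role in this proof; it merely identifies which of the two terms in the minimum is active and will be convenient for the later complexity accounting.
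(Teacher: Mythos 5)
Your proposal is correct and is essentially the paper's own argument: both play the rejection upper bound $\|\nabla f(\mathbf{c}_k)\|\le\kappa^2_{n,\beta}L\delta_k$ against the lower bound of Lemma~\ref{lem:grad-lb-correct}, and both invoke the two components of the minimum defining $\delta_{\rm tail}$ so that the linear and quadratic terms each absorb $(1-\eta)\bar d^{(k)}/2$ of slack. The only cosmetic difference is that you collapse everything into one scalar inequality $\bar d^{(k)}\le\frac{L}{2}\delta_k^2+\kappa^2_{n,\beta}LR\,\delta_k$ while the paper keeps the two gradient bounds separate and compares $\frac{1+\eta}{2}\frac{\bar d^{(k)}}{R}$ with $\frac{1-\eta}{2}\frac{\bar d^{(k)}}{R}$; your explicit remarks on strictness and the degenerate case $\bar d^{(k)}=0$ are a small bonus over the paper's version.
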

	
	\begin{proof}
		If rejected, \eqref{eq:up-bound-convex} gives:
		\[
		\|\nabla f(\mathbf{c}_k)\|\le \kappa^2_{n,\beta}L\,\delta_k\le \kappa^2_{n,\beta}L\,\delta_{\rm tail}(\bar d^{(k)}).
		\]
		
		By Lemma~\ref{lem:grad-lb-correct} and \eqref{eq:conv-delta-tail},
		\[
		\|\nabla f(\mathbf{c}_k)\|\ \ge\ \dfrac{\bar d^{(k)}-\dfrac{L}{2}\delta_k^2}{R}
		\ \ge\ \dfrac{\bar d^{(k)}}{R}-\dfrac{L}{2R}\,\delta_{\rm tail}(\bar d^{(k)})^2
		\ \ge\ \dfrac{1+\eta}{2}\cdot\dfrac{\bar d^{(k)}}{R}.
		\]
		Nonetheless $\kappa^2_{n,\beta}L\,\delta_{\rm tail}(\bar d^{(k)})\le \dfrac{1-\eta}{2}\cdot\dfrac{\bar d^{(k)}}{R}$ by the first branch in \eqref{eq:conv-delta-tail}, and thus we have a contradiction.
	\end{proof}
	
	\subsubsection{Radius Lower Bound}
	
	We define the \emph{tail region} as iterations after the condition $\delta_{k} \leq \delta_{\rm tail}(\bar d^{(k)})$ is satisfied \emph{for the first time}. The following lemma, akin to \eqref{eq:ncvglb}, shows that the simplex radius in the tail region will not be too small and thus have a lower bound.
	
	\begin{lemma}[Radius Lower Bound after Entry the Tail Region]\label{lem:conv-radius-lb} Relation
		\begin{equation}\label{eq:conv-radiuslb}
			\delta_k\ \ge\ \gamma\,\delta_{\rm tail}(\bar d^{(k)})
		\end{equation}
		
		holds for all $k$ if at some $t\leq k$, $\delta_{t} \leq \delta_{\rm tail}(\bar d^{(t)}).$
	\end{lemma}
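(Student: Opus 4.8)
The plan is to prove \eqref{eq:conv-radiuslb} by induction on $k$, starting from the first entry time $T:=\min\{k:\delta_k\le\delta_{\rm tail}(\bar d^{(k)})\}$, since for any $k<T$ no index $t\le k$ satisfies the hypothesis and nothing is claimed. Two monotonicity facts drive the entire argument. First, $\bar d^{(k)}$ is non-increasing in $k$: a shrinking step does not increase it by Lemma~\ref{lem:convex-shrink-noninc}, and a reflection step decreases it by \eqref{eq:conv-ref-drop}. Second, $\delta_{\rm tail}(\bar d)$ is non-decreasing in $\bar d$, being the minimum of the two increasing functions $\frac{(1-\eta)\bar d}{2\kappa^2_{n,\beta}LR}$ and $\sqrt{(1-\eta)\bar d/L}$ appearing in \eqref{eq:conv-delta-tail}. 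Composing the two, $\delta_{\rm tail}(\bar d^{(k)})$ is non-increasing in $k$, and this is the fact I will invoke repeatedly to pass the bound from step $k$ to step $k+1$.

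For the base case $k=T$ (with $T\ge1$; if $T=0$ the inequality is read off directly from a sufficiently large initial radius), I would argue that the step $T-1$ that first carried the iterate into the tail region must be a shrinking step. Since $T$ is the first entry time we have $\delta_{T-1}>\delta_{\rm tail}(\bar d^{(T-1)})$; were step $T-1$ a reflection step, $\delta$ would be unchanged while $\delta_{\rm tail}$ does not increase, giving $\delta_T=\delta_{T-1}>\delta_{\rm tail}(\bar d^{(T-1)})\ge\delta_{\rm tail}(\bar d^{(T)})$, which contradicts $\delta_T\le\delta_{\rm tail}(\bar d^{(T)})$. Hence step $T-1$ shrinks, so $\delta_T=\gamma\delta_{T-1}>\gamma\,\delta_{\rm tail}(\bar d^{(T-1)})\ge\gamma\,\delta_{\rm tail}(\bar d^{(T)})$, which establishes the base case.

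For the inductive step, assume $\delta_k\ge\gamma\,\delta_{\rm tail}(\bar d^{(k)})$ for some $k\ge T$ and split on the type of iteration $k$. If $k$ is a reflection step, then $\delta_{k+1}=\delta_k\ge\gamma\,\delta_{\rm tail}(\bar d^{(k)})\ge\gamma\,\delta_{\rm tail}(\bar d^{(k+1)})$ by the composed monotonicity. If $k$ is a shrinking step, then the reflection was rejected, so the contrapositive of Lemma~\ref{lem:conv-tail-accept} yields $\delta_k>\delta_{\rm tail}(\bar d^{(k)})$, whence $\delta_{k+1}=\gamma\delta_k>\gamma\,\delta_{\rm tail}(\bar d^{(k)})\ge\gamma\,\delta_{\rm tail}(\bar d^{(k+1)})$, again by monotonicity. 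In both cases the bound propagates, completing the induction. Notably the shrinking case needs no induction hypothesis at all; it is only a run of reflection steps, which holds $\delta$ fixed while $\bar d$ (and hence $\delta_{\rm tail}$) shrinks, that forces the use of the hypothesis.

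The main obstacle I anticipate is the base case rather than the inductive step: one must correctly identify the transition into the tail region as a shrinking step and dispose of the $T=0$ boundary, whereas the propagation is a routine case split resting entirely on the two monotonicity facts and the contrapositive of the acceptance lemma. Care is also needed to confirm that $\delta_{\rm tail}$ is genuinely monotone across its two branches and that $\bar d^{(k)}$ decreases under \emph{both} operation types, since these are precisely the properties that let a non-increasing $\delta_{\rm tail}(\bar d^{(k)})$ be tracked by the reflection-invariant radius.
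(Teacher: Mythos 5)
Your proof is correct and follows essentially the same route as the paper's: identify the first entry time, show the entering step must be a shrinking step so that $\delta$ drops by at most a factor $\gamma$ below the tail radius, and then propagate the bound forward using the contrapositive of Lemma~\ref{lem:conv-tail-accept} at shrinking steps and the monotonicity of $\bar d^{(k)}$ and of $\delta_{\rm tail}(\cdot)$ at reflection steps. Your write-up is in fact slightly more careful than the paper's on two points the paper leaves implicit --- the justification that the step preceding first entry is a shrink, and the $T=0$ boundary (which the paper only excludes later via the hypothesis $\delta_0>\delta_{\rm tail}(\bar d^{(0)})$ in Corollary~\ref{cro:lb}).
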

	\begin{proof}
		Let $k_{\rm in}$ be the first index with $\delta_{k_{\rm in}}\le \delta_{\rm tail}(\bar d^{(k_{\rm in})})$.
		
		The preceding step $k_{\rm in-1}$ must be a shrinking step at radius larger than $\delta_{\rm tail}(\bar d^{(k_{\rm in})})$. Hence
		\[
		\delta_{k_{\rm in}}=\gamma\,\delta_{k_{\rm in}-1}\ >\ \gamma\,\delta_{\rm tail}(\bar d^{(k_{\rm in})}).
		\]
		
		Since accepted reflections keep $\delta$ unchanged and $\bar d$ is non--increasing by Lemma~\ref{lem:convex-shrink-noninc}, all following reflection steps $k>k_{\rm in}$ before next shrinking step satisfy $\delta_k\ \ge\ \gamma\,\delta_{\rm tail}(\bar d^{(k)})$.
		
		At a new shrinking step ${k_\mathrm{shr}}$, Lemma \ref{lem:conv-tail-accept} shows that $\delta_{k_\mathrm{shr}}\geq \delta_{\rm tail}(\bar d^{({k_\mathrm{shr}})})$, thus $\delta_{k_{\mathrm{shr+1}}} = \gamma \delta_{k_\mathrm{shr}}\geq \gamma \delta_{\rm tail}(\bar d^{({k_\mathrm{shr}})}) \geq \gamma \delta_{\rm tail}(\bar d^{\left({k_\mathrm{shr+1}}\right)})$. Similarly all iterations in the tail region satisfy \eqref{eq:conv-radiuslb}.

	\end{proof}
	
	\begin{corollary}[Global Radius Lower Bound]\label{cro:lb}
		Relation 
		\begin{equation*}
			\delta_k\ \ge\ \gamma\,\delta_{\rm tail}(\bar d^{(k)})
		\end{equation*}
		
		holds for all $k$ if $\delta_0>\delta_{\rm tail}(\bar d^{(0)})$ holds.
	\end{corollary}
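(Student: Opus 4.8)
The plan is to reduce the corollary to Lemma~\ref{lem:conv-radius-lb} by a clean dichotomy based on whether and when the trajectory first enters the tail region. The only extra hypothesis here, $\delta_0>\delta_{\rm tail}(\bar d^{(0)})$, simply places index $0$ outside the tail, which is precisely the complementary regime to the one already handled directly by the lemma.

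First I would record the monotonicity facts that make the argument clean: $\bar d^{(k)}$ is non-increasing by Lemma~\ref{lem:convex-shrink-noninc} (shrinking steps) together with \eqref{eq:conv-ref-drop} (accepted reflection steps), and both branches of $\delta_{\rm tail}(\cdot)$ in \eqref{eq:conv-delta-tail} are increasing in their argument, so $k\mapsto\delta_{\rm tail}(\bar d^{(k)})$ is itself non-increasing. These are exactly the properties Lemma~\ref{lem:conv-radius-lb} already relies on, so they need not be re-derived here.

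Next I would define the entry time $t:=\min\{k:\delta_k\le\delta_{\rm tail}(\bar d^{(k)})\}$, setting $t=\infty$ if no such index exists, and split into two cases. If $t=\infty$, then every iteration satisfies $\delta_k>\delta_{\rm tail}(\bar d^{(k)})$; since $\gamma\in(0,1)$ this already yields $\delta_k>\gamma\,\delta_{\rm tail}(\bar d^{(k)})$ for all $k$, and we are done. If $t<\infty$, the hypothesis $\delta_0>\delta_{\rm tail}(\bar d^{(0)})$ forces $t\ge 1$. For $k<t$, minimality of $t$ gives $\delta_k>\delta_{\rm tail}(\bar d^{(k)})\ge\gamma\,\delta_{\rm tail}(\bar d^{(k)})$, again using $\gamma<1$; for $k\ge t$, I would invoke Lemma~\ref{lem:conv-radius-lb} with this particular $t$ (which satisfies $t\le k$ and $\delta_t\le\delta_{\rm tail}(\bar d^{(t)})$) to obtain $\delta_k\ge\gamma\,\delta_{\rm tail}(\bar d^{(k)})$ directly. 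Taking the union over all $k$ establishes the bound globally.

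I do not expect a genuine obstacle: the substance of the corollary lies entirely in Lemma~\ref{lem:conv-radius-lb}, and the corollary merely glues the ``pre-tail'' segment---where the radius is trivially large because it still exceeds the \emph{full} $\delta_{\rm tail}$---to the ``in-tail'' segment governed by the lemma. The one point deserving care is confirming that the dichotomy is exhaustive and that the $\gamma<1$ slack is exactly what lets every pre-tail index satisfy the weaker bound $\delta_k\ge\gamma\,\delta_{\rm tail}(\bar d^{(k)})$ for free; once this is observed, the hypothesis on $\delta_0$ serves only to certify that index $0$ lies in the pre-tail segment.
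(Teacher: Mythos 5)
Your proof is correct and follows essentially the same route as the paper's: split at the first entry time into the tail region, use $\gamma<1$ to handle the pre-tail indices where $\delta_k>\delta_{\rm tail}(\bar d^{(k)})$ trivially, and invoke Lemma~\ref{lem:conv-radius-lb} afterwards. Your only addition is to make the case where the tail is never entered explicit, which the paper leaves implicit.
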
	
	\begin{proof}
		If at some $t$, $\delta_{t} \leq \delta_{\rm tail}(\bar d^{(t)})$ for the first time, as in Lemma~\ref{lem:conv-radius-lb} the relation would hold for all $k \geq t$.
		
		Note that  for all $k < t$ , $\delta_k\ > \delta_{\rm tail}(\bar d^{(k)}) > \gamma\,\delta_{\rm tail}(\bar d^{(k)})$, which finishes the proof.
	\end{proof}
	
	\subsubsection{Two--Phase of Decrease}
	Define
	\begin{equation}\label{eq:conv-C1A}
		C_1 := \dfrac{2\beta}{n}\gamma^2(1-\eta)\in(0,1),\qquad
		A := \dfrac{\beta\gamma^2(1-\eta)^2}{2L\,R^2 n(\kappa^2_{n,\beta})^2}.
	\end{equation}
	
	Let $k_{\rm sw} := \min\{k\ge 0:\ \bar d^{(k)}\le \bar d_{\rm thr}\}$. We define the iteration index before switching (Phase \Rmnum{1}), i.e. $\bar d^{(k)}>\bar d_{\rm thr}$ as $k \in [0,k_{\rm sw})$. The index after switching (Phase \Rmnum{2}) is when $\bar d^{(k)}\leq \bar d_{\rm thr}$.
	
	\begin{lemma}[Phase \Rmnum{1}: Linear Decrease]\label{lem:phase1}
		If $\delta_0>\delta_{\rm tail}(\bar d^{(0)})$ and at successful iteration $k$, $\bar d^{(k)}>\bar d_{\rm thr}$, then $\delta_{\rm tail}(\bar d^{(k)})=\sqrt{\dfrac{(1-\eta)\bar d^{(k)}}{L}}$ and
		\[
		\bar d^{(k+1)}\ \le\ \bar d^{(k)}\ -\ \dfrac{2\beta}{n}\,L\,\delta_k^2
		\ \le\ (1-C_1)\,\bar d^{(k)}.
		\]
		Consequently, the successful iteration number in this Phase is:
		\begin{equation}\label{eq:conv-Nphase1}
			N_{\rm Phase\,I}\ \le\  \dfrac{\log\!\big(\bar d^{(0)}/\bar d_{\rm thr}\big)}{\log\!\big(1/(1-C_1)\big)} 
			\ = \mathcal{O}\!\left(\dfrac{n}{\beta\,\gamma^2}\,\log\dfrac{\bar d^{(0)}}{\bar d_{\rm thr}}\right).
		\end{equation}
	\end{lemma}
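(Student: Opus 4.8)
The plan is to combine the sufficient-decrease reformulation with the global radius lower bound to extract a geometric contraction on $\bar d^{(k)}$, and then invert that contraction to count successful iterations. First I would settle the branch identity. Since we are in Phase~\Rmnum{1}, $\bar d^{(k)} > \bar d_{\rm thr}$, and by the characterization of the branch threshold stated immediately after \eqref{eq:conv-dthr}, the minimum defining $\delta_{\rm tail}$ in \eqref{eq:conv-delta-tail} is attained by its second argument, so $\delta_{\rm tail}(\bar d^{(k)}) = \sqrt{(1-\eta)\bar d^{(k)}/L}$, as claimed.

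Next I would establish the per-step contraction. At a successful iteration $k$, the reformulated sufficient-decrease condition \eqref{eq:conv-ref-drop} gives $\bar d^{(k+1)} \le \bar d^{(k)} - \frac{2\beta}{n}L\delta_k^2$. To turn this into a multiplicative bound I lower bound $\delta_k^2$: because $\delta_0 > \delta_{\rm tail}(\bar d^{(0)})$, Corollary~\ref{cro:lb} applies and yields $\delta_k \ge \gamma\,\delta_{\rm tail}(\bar d^{(k)})$; squaring and substituting the Phase~\Rmnum{1} branch value gives $\delta_k^2 \ge \gamma^2(1-\eta)\bar d^{(k)}/L$. Inserting this into the decrease inequality and recalling $C_1 = \frac{2\beta}{n}\gamma^2(1-\eta)$ from \eqref{eq:conv-C1A} produces $\bar d^{(k+1)} \le \bar d^{(k)} - \frac{2\beta}{n}L\delta_k^2 \le (1-C_1)\bar d^{(k)}$, which is exactly the asserted chain.

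For the iteration count the key observation is that only successful iterations contract $\bar d$ geometrically, while shrinking steps merely preserve or decrease it: by Lemma~\ref{lem:convex-shrink-noninc}, every shrinking step satisfies $\bar d^{(k+1)} \le \bar d^{(k)}$. Hence after $j$ successful iterations, interleaved with any number of shrinking steps, we have $\bar d \le (1-C_1)^j\,\bar d^{(0)}$. Phase~\Rmnum{1} persists only while $\bar d^{(k)} > \bar d_{\rm thr}$, so any $j$ with the corresponding iterate still in Phase~\Rmnum{1} obeys $(1-C_1)^j\,\bar d^{(0)} > \bar d_{\rm thr}$; taking logarithms and using $\log(1-C_1) < 0$ rearranges to $j < \log(\bar d^{(0)}/\bar d_{\rm thr})/\log(1/(1-C_1))$, which bounds $N_{\rm Phase\,\Rmnum{1}}$. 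The stated $\mathcal{O}$ form then follows from the elementary inequality $\log(1/(1-C_1)) \ge C_1$ (valid for $C_1 \in (0,1)$), which replaces the denominator by $C_1 = \frac{2\beta}{n}\gamma^2(1-\eta)$, and absorbing the fixed constant $\eta$ gives $N_{\rm Phase\,\Rmnum{1}} = \mathcal{O}\!\big(\frac{n}{\beta\gamma^2}\log(\bar d^{(0)}/\bar d_{\rm thr})\big)$.

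I do not anticipate a serious obstacle; the only point requiring care is the bookkeeping in the counting step, namely verifying that shrinking steps never undo the geometric progress, so that the contraction factor accumulates purely over the successful iterations while the shrinking steps are controlled separately (as in the nonconvex analysis of Section~\ref{sec:complexity}). The second place to be explicit is confirming that Corollary~\ref{cro:lb} is genuinely applicable throughout Phase~\Rmnum{1}, which rests entirely on the standing hypothesis $\delta_0 > \delta_{\rm tail}(\bar d^{(0)})$.
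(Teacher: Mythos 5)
Your proposal is correct and follows essentially the same route as the paper's own proof: the branch identity from the definition of $\bar d_{\rm thr}$, the lower bound $\delta_k\ge\gamma\,\delta_{\rm tail}(\bar d^{(k)})$ from Corollary~\ref{cro:lb} substituted into \eqref{eq:conv-ref-drop} to get the factor $(1-C_1)$, and the counting via $\log(1/(1-C_1))\ge C_1$. Your extra remark that shrinking steps cannot undo progress (Lemma~\ref{lem:convex-shrink-noninc}) is a worthwhile explicit justification of the paper's implicit step of iterating over the accepted--reflection subsequence only.
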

	
	\begin{proof}
		By the branch threshold definition, the first relation is obvious.
		By the radius lower bound Corollary~\ref{cro:lb},
		\begin{equation}\label{eq:pf1-radiuslb}
			\delta_k\ \ge\ \gamma\,\delta_{\rm tail}(\bar d^{(k)})\ =\ \gamma\,\sqrt{\dfrac{(1-\eta)\,\bar d^{(k)}}{L}}.
		\end{equation}
		Applying the accepted--reflection average decrease \eqref{eq:conv-ref-drop}
		\[
		\bar d^{(k+1)}\ \le\ \bar d^{(k)}\ -\ \dfrac{2\beta}{n}\,L\,\delta_k^2
		\ \stackrel{\eqref{eq:pf1-radiuslb}}{\le}\ \bar d^{(k)}\ -\ \dfrac{2\beta}{n}\,L\,\gamma^2\cdot \dfrac{(1-\eta)}{L}\,\bar d^{(k)}
		\ =\ \bigl(1-C_1\bigr)\,\bar d^{(k)},
		\]
		where $C_1:=\dfrac{2\beta}{n}\,\gamma^2(1-\eta)\in(0,1)$.
		
		Iterating this linear decrease over the accepted--reflection subsequence on the interval $[0,k_{\rm sw})$ yields
		\[
		\bar d^{(N_{\rm Phase\,I})}\ \le\ (1-C_1)^{N_{\rm Phase\,I}}\,\bar d^{(0)}.
		\]
		The upper bound for $N_{\rm Phase\,I}$ with $\bar d^{(N_{\rm Phase\,I})}\le \bar d_{\rm thr}$ is
		\[
		N_{\rm Phase\,I}\ \le\ \dfrac{\log\!\big(\bar d^{(0)}/\bar d_{\rm thr}\big)}{\log\!\big(1/(1-C_1)\big)}.
		\]
		Using $\log\!\big(1/(1-C_1)\big)\ge C_1$ for $C_1\in(0,1)$ yields the desired
		$\mathcal{O}\big(\dfrac{n}{\beta\gamma^2}\log(\bar d^{(0)}/\bar d_{\rm thr})\big)$ bound.
	\end{proof}
	
	\begin{lemma}[Phase \Rmnum{2}: Sublinear Decrease]\label{lem:phase2}
		If $\delta_0>\delta_{\rm tail}(\bar d^{(0)})$ and at successful iteration $k$, $\bar d^{(k)}\le \bar d_{\rm thr}$, then $\delta_{\rm tail}(\bar d^{(k)})=\dfrac{(1-\eta)\bar d^{(k)}}{2\kappa^2_{n,\beta}LR}$ and
		\[
		\bar d^{(k+1)}\ \le\ \bar d^{(k)}\ -\ \dfrac{2\beta}{n}\,L\,\delta_k^2
		\ \le\ \bar d^{(k)}\ -\ A\big(\bar d^{(k)}\big)^2.
		\]
		where $A := \dfrac{\beta\gamma^2(1-\eta)^2}{2L\,R^2 n(\kappa^2_{n,\beta})^2}$.
		
		In addition, to reach $\bar d^{(k)}\le \varepsilon$ from $\bar d_{\rm thr}$, the successful iteration number in this phase is:
		\begin{equation}\label{eq:conv-Nphase2}
			N_{\rm Phase\,II}\ \le\ \dfrac{1}{A}\left(\dfrac{1}{\varepsilon}-\dfrac{1}{\bar d_{\rm thr}}\right).
		\end{equation}
	\end{lemma}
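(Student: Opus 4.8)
The plan is to dispatch the three assertions in turn; each reduces to substituting the Phase~II branch of the adaptive tail radius into machinery already established. First I would settle the branch identity. By the definition \eqref{eq:conv-delta-tail}, $\delta_{\rm tail}(\bar d)$ is the minimum of the linear candidate $\frac{(1-\eta)\bar d}{2\kappa^2_{n,\beta}LR}$ and the square--root candidate $\sqrt{(1-\eta)\bar d/L}$. Squaring the two and cancelling the common factor $(1-\eta)\bar d/L$ shows the linear branch is the smaller one exactly when $\bar d\le \frac{4(\kappa^2_{n,\beta})^2 LR^2}{1-\eta}=\bar d_{\rm thr}$, which is precisely the Phase~II hypothesis $\bar d^{(k)}\le\bar d_{\rm thr}$. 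This is just the restatement recorded after \eqref{eq:conv-dthr}, so $\delta_{\rm tail}(\bar d^{(k)})=\frac{(1-\eta)\bar d^{(k)}}{2\kappa^2_{n,\beta}LR}$ is immediate.

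For the per--step descent I would begin from the accepted--reflection inequality \eqref{eq:conv-ref-drop}, which already supplies $\bar d^{(k+1)}\le \bar d^{(k)}-\frac{2\beta}{n}L\delta_k^2$ at any successful step, i.e.\ the first claimed inequality. The hypothesis $\delta_0>\delta_{\rm tail}(\bar d^{(0)})$ activates the global radius lower bound Corollary~\ref{cro:lb}, giving $\delta_k\ge\gamma\,\delta_{\rm tail}(\bar d^{(k)})$; inserting the Phase~II branch from the previous paragraph yields $\delta_k^2\ge \gamma^2\frac{(1-\eta)^2(\bar d^{(k)})^2}{4(\kappa^2_{n,\beta})^2 L^2 R^2}$. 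Substituting and collecting constants reproduces $\frac{2\beta}{n}L\delta_k^2\ge A(\bar d^{(k)})^2$ with $A$ exactly as in \eqref{eq:conv-C1A}, which is the second inequality $\bar d^{(k+1)}\le \bar d^{(k)}-A(\bar d^{(k)})^2$.

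For the iteration count I would pass to reciprocals over the reflection subsequence. Since shrinking steps never increase $\bar d$ by Lemma~\ref{lem:convex-shrink-noninc}, only successful steps need counting, and on each the recurrence together with monotonicity $\bar d^{(k+1)}\le\bar d^{(k)}$ gives the key identity
\[
\frac{1}{\bar d^{(k+1)}}-\frac{1}{\bar d^{(k)}}=\frac{\bar d^{(k)}-\bar d^{(k+1)}}{\bar d^{(k)}\bar d^{(k+1)}}\ \ge\ \frac{A(\bar d^{(k)})^2}{\bar d^{(k)}\bar d^{(k+1)}}=A\,\frac{\bar d^{(k)}}{\bar d^{(k+1)}}\ \ge\ A.
\]
Telescoping from the entry value $\bar d_{\rm thr}$ down to the target, if $\bar d^{(k)}>\varepsilon$ persists then $\frac{1}{\varepsilon}\ge\frac{1}{\bar d_{\rm thr}}+N_{\rm Phase\,II}\,A$, which rearranges to the stated $N_{\rm Phase\,II}\le \frac{1}{A}\big(\frac{1}{\varepsilon}-\frac{1}{\bar d_{\rm thr}}\big)$.

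The one step requiring genuine care — and the main obstacle — is the legitimacy of the reciprocal manipulation, which presupposes $\bar d^{(k+1)}>0$, equivalently $A\bar d^{(k)}<1$. I would settle this via the clean identity $A\,\bar d_{\rm thr}=\frac{2\beta\gamma^2(1-\eta)}{n}=C_1\in(0,1)$, obtained by substituting $\bar d_{\rm thr}$ from \eqref{eq:conv-dthr} into $A$ and comparing with $C_1$ in \eqref{eq:conv-C1A}. Because $\bar d^{(k)}\le\bar d_{\rm thr}$ throughout Phase~II, this forces $A\bar d^{(k)}\le C_1<1$, so every iterate remains strictly positive and the telescoping is valid; everything else is routine substitution of constants.
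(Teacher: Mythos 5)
Your proposal is correct and follows essentially the same route as the paper: the branch identity from \eqref{eq:conv-dthr}, the radius lower bound of Corollary~\ref{cro:lb} combined with \eqref{eq:conv-ref-drop} to get $\bar d^{(k+1)}\le\bar d^{(k)}-A(\bar d^{(k)})^2$, and the inverse--gap telescoping. Your explicit check that $A\,\bar d^{(k)}\le A\,\bar d_{\rm thr}=C_1<1$, which guarantees $\bar d^{(k+1)}>0$ and legitimizes the passage to reciprocals, is a detail the paper leaves implicit and is a worthwhile addition.
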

	
	\begin{proof}
		Akin to Lemma~\ref{lem:phase1},
		\begin{equation}\label{eq:pf2-radiuslb}
			\delta_k\ \ge\ \gamma\,\delta_{\rm tail}(\bar d^{(k)})\ =\ \gamma\,\dfrac{(1-\eta)\,\bar d^{(k)}}{2\,\kappa^2_{n,\beta}L R}.
		\end{equation}
		Apply \eqref{eq:conv-ref-drop} and \eqref{eq:pf2-radiuslb}:
		\[
		\bar d^{(k+1)}\ \le\ \bar d^{(k)}\ -\ \dfrac{2\beta}{n}\,L\,\delta_k^2
		\ \le\ \bar d^{(k)}\ -\ \dfrac{2\beta}{n}\,L\,\gamma^2\left(\dfrac{(1-\eta)\,\bar d^{(k)}}{2\,\kappa^2_{n,\beta}L R}\right)^{\!2}
		\ =\ \bar d^{(k)}\ -\ A\,\big(\bar d^{(k)}\big)^2,
		\]
		with $A=\dfrac{\beta\,\gamma^2(1-\eta)^2}{2\,n\,(\kappa^2_{n,\beta})^2\,L\,R^2}$.
		
		Rearrange to the \emph{inverse--gap} recursion:
		\[
		\dfrac{1}{\bar d^{(k+1)}}\ \ge\ \dfrac{1}{\bar d^{(k)}}\ +\ A.
		\]
		Summing over $N_{\rm Phase\,II}$ accepted reflections from $k$ inside Phase~II yields 
		\[
		\dfrac{1}{\bar d^{(k+N_{\rm Phase\,II})}}\ \ge\ \dfrac{1}{\bar d^{(k)}}\ +\ N_{\rm Phase\,II} A.
		\]
		The smallest $N_{\rm Phase\,II}$ such that $\bar d^{(k+N_{\rm Phase\,II})}\le \varepsilon$ therefore satisfies
		\[
		N_{\rm Phase\,II}\ \le\ \dfrac{1}{A}\left(\dfrac{1}{\varepsilon}-\dfrac{1}{\bar d_{\rm thr}}\right).
		\]
	\end{proof}
	
	\subsection{Worst--Case Complexity (\Rmnum{3})}
	\begin{lemma}[Number of Shrinking Steps under the Convex Stopping Rule]\label{lem:num-shrink-steps-conv}
		Suppose that Assumption~\ref{as3} holds,  $\delta_0>\bar{\delta}_{\rm cvx}$ and the stopping condition is not satisfied yet, where
		\begin{equation}\label{eq:def-delta-cvx}
			\bar{\delta}_{\rm cvx}\ :=\ \min\!\left\{\ \dfrac{(1-\eta)\,\varepsilon}{2\,\kappa^2_{n,\beta}\,L\,R}\ ,\ \sqrt{\dfrac{(1-\eta)\,\varepsilon}{L}}\ \right\}.
		\end{equation}
		Denote by \(N_r\) and \(N_s\) the numbers of reflection and shrinking steps among the \(N_\varepsilon\) iterations, respectively.
		Then
		\begin{equation}\label{eq:num-shrink-steps-conv}
			N_s\ <\ \dfrac{\log\!\big(\bar{\delta}_{\rm cvx}/\delta_0\big)}{\log \gamma}
			\ =\ \dfrac{\log\!\big(\delta_0/\bar{\delta}_{\rm cvx}\big)}{\log (1/\gamma)}.
		\end{equation}
	\end{lemma}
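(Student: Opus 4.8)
The plan is to mirror the nonconvex counting argument of Lemma~\ref{lem:num-shrink-steps}: the whole statement reduces to a \emph{radius lower bound} asserting that, as long as the algorithm has not stopped, the simplex radius can never be driven below $\bar\delta_{\rm cvx}$ by shrinking. Once such a bound is in hand, the count of shrinking steps follows from the fact that only shrinking steps decrease the radius, each by the fixed factor $\gamma$, while reflection steps leave it unchanged.

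First I would record that, because the stopping rule $\bar d^{(k)}\le\varepsilon$ has not yet triggered, every performed iteration $k<N_\varepsilon$ satisfies $\bar d^{(k)}>\varepsilon$. Next, at any iteration that is a \emph{shrinking} step the reflection was rejected, so the contrapositive of Lemma~\ref{lem:conv-tail-accept} gives $\delta_k>\delta_{\rm tail}(\bar d^{(k)})$. I would then observe that $\delta_{\rm tail}(\cdot)$ in \eqref{eq:conv-delta-tail} is the minimum of two strictly increasing functions of its argument, hence itself strictly increasing; combined with $\bar d^{(k)}>\varepsilon$ and the identity $\bar\delta_{\rm cvx}=\delta_{\rm tail}(\varepsilon)$ read off from \eqref{eq:def-delta-cvx}, this yields $\delta_k>\delta_{\rm tail}(\bar d^{(k)})>\bar\delta_{\rm cvx}$. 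Thus every shrinking step is performed at a radius strictly above $\bar\delta_{\rm cvx}$.

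It remains to count. Since reflections leave $\delta_k$ fixed and each of the $N_s$ shrinking steps multiplies it by $\gamma$, after all shrinking has occurred the radius equals $\delta_0\gamma^{N_s}$, and the radius lower bound forces $\delta_0\gamma^{N_s}>\bar\delta_{\rm cvx}$. Taking logarithms gives $N_s\log\gamma>\log(\bar\delta_{\rm cvx}/\delta_0)$, and dividing by $\log\gamma<0$ reverses the inequality to produce $N_s<\log(\bar\delta_{\rm cvx}/\delta_0)/\log\gamma$, which is exactly \eqref{eq:num-shrink-steps-conv} after the rewriting $\log(\bar\delta_{\rm cvx}/\delta_0)/\log\gamma=\log(\delta_0/\bar\delta_{\rm cvx})/\log(1/\gamma)$. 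The hypothesis $\delta_0>\bar\delta_{\rm cvx}$ guarantees the right-hand side is positive, so the bound is non-vacuous.

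The main obstacle is establishing the radius lower bound cleanly, and in particular verifying the monotonicity of $\delta_{\rm tail}$ across its two-branch $\min$, so that $\bar d^{(k)}>\varepsilon$ really does imply $\delta_{\rm tail}(\bar d^{(k)})>\bar\delta_{\rm cvx}$ uniformly: the active branch may switch between the first and second branch of \eqref{eq:conv-delta-tail} as $\bar d^{(k)}$ varies, so I would argue branch-by-branch, using that the minimum of two increasing maps is increasing. A slightly more robust alternative route would invoke the Global Radius Lower Bound of Corollary~\ref{cro:lb}, namely $\delta_k\ge\gamma\,\delta_{\rm tail}(\bar d^{(k)})$ for all $k$, which already encodes the worst single shrink; I would keep this in reserve should the direct per-shrink argument require the extra factor $\gamma$ to pin down the exact constant in \eqref{eq:num-shrink-steps-conv}.
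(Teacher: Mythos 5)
Your proposal is correct and follows essentially the same route as the paper: both arguments show that every shrinking step must occur at a radius exceeding $\bar{\delta}_{\rm cvx}$ by combining the contrapositive of Lemma~\ref{lem:conv-tail-accept} with the monotonicity of $\delta_{\rm tail}(\cdot)$ and the identity $\bar{\delta}_{\rm cvx}=\delta_{\rm tail}(\varepsilon)$, then count shrinkings via $\delta_0\gamma^{N_s}>\bar{\delta}_{\rm cvx}$. The only cosmetic difference is that the paper phrases the radius bound as a proof by contradiction while you argue it directly; the content is identical.
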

	
	\begin{proof}
		By definition of \(N_\varepsilon\), we have \(\bar d^{(k)}\ge \varepsilon\) for all \(k\le N_\varepsilon-1\).
		Fix any \(k\le N_\varepsilon-1\) that is a shrinking step (i.e., the reflection is rejected and the simplex is contracted).
		
		At such \(k\), we must have \(\delta_k>\bar{\delta}_{\rm cvx}\); otherwise, if \(\delta_k\le \bar{\delta}_{\rm cvx}\), then
		\(\delta_k\le \delta_{\rm tail}(\bar d^{(k)})\) because \(\bar d^{(k)}\ge \varepsilon\) and \(\delta_{\rm tail}(\cdot)\) is increasing,
		and by the Lemma~\ref{lem:conv-tail-accept} the reflection cannot be rejected, which is a contradiction.
		
		Each shrinking multiplies the radius by \(\gamma\in(0,1)\).
		
		After \(N_s\) shrinkings,
		\(\delta_0\,\gamma^{N_s}>\bar{\delta}_{\rm cvx}\).
		Taking logs (note \(\log\gamma<0\)) gives
		\[
		\log\gamma\cdot N_s\ <\ \log\!\big(\bar{\delta}_{\rm cvx}/\delta_0\big),
		\]
		which is exactly \eqref{eq:num-shrink-steps-conv}.
	\end{proof}

	\begin{theorem}[Worst--Case Complexity in Convex Case]\label{thm:convex-wc}
		Suppose Assumption~\ref{as3} holds and $\delta_0>\delta_{\rm tail}(\bar d^{(0)})$. Then for Algorithm~\ref{alg2} the number of iterations needed to ensure
		$\bar d^{(k)}\le \varepsilon$ is bounded by:
		\begin{equation}\label{eq:conv-wc-bound}
			N_{\varepsilon}\ \le\
			\dfrac{\log\!\big(\bar d^{(0)}/\bar d_{\rm thr}\big)}{\log\!\big(1/(1-C_1)\big)} + \dfrac{1}{A}\left(\dfrac{1}{\varepsilon}-\dfrac{1}{\bar d_{\rm thr}}\right) +
			\dfrac{\log\!\big(\delta_0/\bar{\delta}_{\rm cvx}\big)}{\log (1/\gamma)},
		\end{equation}
		where $C_1,A$ are given in \eqref{eq:conv-C1A}, $\bar d_{\rm thr}$ in \eqref{eq:conv-dthr}, and $\bar{\delta}_{\rm cvx}$ in \eqref{eq:def-delta-cvx}.
		
		In particular, if we choose $\beta = \dfrac{1}{2}$, 
		\begin{equation}\label{eq:conv-wc-order}
			N_{\varepsilon}\ =\mathcal{O}\left(\dfrac{n^2}{\varepsilon} + \log\dfrac{1}{\varepsilon} + n\log n\right).
		\end{equation}
		
		Since $d^{(k)}\le \bar d^{(k)}$ for all $k$, the same bound certifies $d^{(k)}\le \varepsilon$.
	\end{theorem}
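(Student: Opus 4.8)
The plan is to partition the $N_\varepsilon$ iterations of Algorithm~\ref{alg2} into three mutually exclusive and exhaustive groups and bound each separately: successful reflection steps taken while $\bar d^{(k)}>\bar d_{\rm thr}$ (Phase~\Rmnum{1}), successful reflection steps taken while $\bar d^{(k)}\le \bar d_{\rm thr}$ (Phase~\Rmnum{2}), and shrinking steps. Since every iteration is either an accepted reflection or a shrinking step, once each group is bounded by $N_{\rm Phase\,I}$, $N_{\rm Phase\,II}$, and $N_s$ respectively, summation yields $N_\varepsilon\le N_{\rm Phase\,I}+N_{\rm Phase\,II}+N_s$, which is precisely the right-hand side of \eqref{eq:conv-wc-bound}.

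Before summing, I would record the two structural facts that make this clean decomposition legitimate. First, the hypothesis $\delta_0>\delta_{\rm tail}(\bar d^{(0)})$ activates the global radius lower bound $\delta_k\ge\gamma\,\delta_{\rm tail}(\bar d^{(k)})$ of Corollary~\ref{cro:lb}; this is exactly the inequality that both Phase lemmas feed into their per-step decrease estimates. Second, $\bar d^{(k)}$ is non-increasing in $k$, since accepted reflections decrease it by \eqref{eq:conv-ref-drop} while shrinking steps do not increase it by Lemma~\ref{lem:convex-shrink-noninc}. Monotonicity guarantees that once $\bar d^{(k)}$ drops below $\bar d_{\rm thr}$ it never returns above, so the switch index $k_{\rm sw}$ is well defined and the ``Phase~\Rmnum{1} then Phase~\Rmnum{2}'' labeling is unambiguous. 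It also justifies telescoping the per-step decreases over the subsequence of accepted reflections in each phase even though shrinking steps are interleaved: the shrinking steps neither raise nor lower $\bar d$ and are counted separately in $N_s$. This bookkeeping---verifying that the two reflection subsequences together with the shrinking steps exactly exhaust the iteration count with no double counting at the crossing of $\bar d_{\rm thr}$---is the step I expect to require the most care; every individual inequality is already supplied by an earlier lemma.

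With these facts in hand the three bounds follow directly. Lemma~\ref{lem:phase1} gives the geometric decrease of $\bar d^{(k)}$ along successful reflections above $\bar d_{\rm thr}$, so the number of such reflections needed to reach $\bar d_{\rm thr}$ is at most $\log(\bar d^{(0)}/\bar d_{\rm thr})/\log(1/(1-C_1))$; Lemma~\ref{lem:phase2} gives the inverse-gap recursion $1/\bar d^{(k+1)}\ge 1/\bar d^{(k)}+A$ below $\bar d_{\rm thr}$, so reaching $\bar d^{(k)}\le\varepsilon$ costs at most $A^{-1}(1/\varepsilon-1/\bar d_{\rm thr})$ reflections; and Lemma~\ref{lem:num-shrink-steps-conv} bounds $N_s$ by $\log(\delta_0/\bar\delta_{\rm cvx})/\log(1/\gamma)$. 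Adding the three proves \eqref{eq:conv-wc-bound}, and the final claim follows since convexity gives $d^{(k)}\le\bar d^{(k)}$ for every $k$, transferring the bound directly to the central gap.

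For the order estimate \eqref{eq:conv-wc-order} I would substitute $\beta=\tfrac12$, which annihilates the leading $(\beta-\tfrac12)n$ term of $\kappa^2_{n,\beta}$ and leaves $\kappa^2_{n,1/2}=\tfrac{\sqrt n}{2}-\tfrac12=\mathcal{O}(\sqrt n)$, hence $(\kappa^2_{n,\beta})^2=\mathcal{O}(n)$. Tracking only the $n$- and $\varepsilon$-dependence then gives $\bar d_{\rm thr}=\Theta(n)$, $C_1=\Theta(1/n)$, $A=\Theta(1/n^2)$, and, in the small-$\varepsilon$ regime where its first branch is active, $\bar\delta_{\rm cvx}=\Theta(\varepsilon/\sqrt n)$. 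Consequently the dominant Phase~\Rmnum{2} term is $A^{-1}/\varepsilon=\mathcal{O}(n^2/\varepsilon)$; the Phase~\Rmnum{1} term, using $\log(1/(1-C_1))\ge C_1$, is $\mathcal{O}(n\log(\bar d^{(0)}/\bar d_{\rm thr}))=\mathcal{O}(n\log n)$ once the $\Theta(n)$ scale of $\bar d_{\rm thr}$ enters the logarithm; and the shrinking term is $\mathcal{O}(\log(\sqrt n/\varepsilon))=\mathcal{O}(\log n+\log(1/\varepsilon))$, absorbed into the other two. Summing gives $\mathcal{O}(n^2/\varepsilon+\log(1/\varepsilon)+n\log n)$, the only mild subtlety being the identification of the active branch of $\bar\delta_{\rm cvx}$ (and of $\delta_{\rm tail}$ across the threshold) for small $\varepsilon$.
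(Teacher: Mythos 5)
Your proposal is correct and follows essentially the same route as the paper: the bound \eqref{eq:conv-wc-bound} is obtained by summing the Phase~\Rmnum{1} count from Lemma~\ref{lem:phase1}, the Phase~\Rmnum{2} count from Lemma~\ref{lem:phase2}, and the shrinking count from Lemma~\ref{lem:num-shrink-steps-conv}, with the order estimate \eqref{eq:conv-wc-order} following from $\kappa^2_{n,1/2}=\mathcal{O}(\sqrt n)$, $C_1=\Theta(1/n)$, and $A=\Theta(1/n^2)$. The extra bookkeeping you supply (monotonicity of $\bar d^{(k)}$ and the well-definedness of the phase switch) is left implicit in the paper but is consistent with its argument.
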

	
	\begin{proof}
		Combining Lemma~\ref{lem:phase1}, Lemma~\ref{lem:phase2}, and Lemma~\ref{lem:num-shrink-steps-conv} to get \eqref{eq:conv-wc-bound}. Notice that $C_1 \in \mathcal{O}\left({1}/{n}\right)$, $A \in \mathcal{O}\left({1}/{n^2}\right)$ and $\bar{\delta}_{\rm thr} \in \mathcal{O}\left(n^2\right)$ when $\dfrac{1}{2}$, together lead to \eqref{eq:conv-wc-order}.
	\end{proof}
	
	\subsection{Worst--Case Complexity under Strong Convexity Assumptions}

	\begin{assumption}\label{as4}
		In addition to Assumption~\ref{as3}, we further assume that \( f(\mathbf{x}) \) is $\mu$--strongly convex. Namely, there exists a constant \( \mu > 0 \) such that:
		\begin{equation}
			f(\mathbf{y})\ge f(\mathbf{x})+\langle \nabla f(\mathbf{x}),\mathbf{y}-\mathbf{x}\rangle+\dfrac{\mu}{2}\|\mathbf{y}-\mathbf{x}\|^2, \quad \forall \mathbf{x},\mathbf{y} \in \R^n.
		\end{equation}
	\end{assumption}
	
	Notice that strong convexity implies the PL condition with parameter $\mu$. We show that RSSM enjoys \emph{linear convergence}, hence a $\log(1/\varepsilon)$ iteration bound.
	
	\begin{theorem}[Linear convergence under strong convexity]\label{thm:sc-wc}
		Suppose Assumption~\ref{as4} holds. Consider any index $j$ at which the reflection is rejected (so a shrinking step occurs). Then for every subsequent \emph{accepted reflection} $k>j$ until the next shrinking occurs, the average gap $\bar d^{(k)}$ decreases linearly as:
		\begin{equation}\label{eq:sc-linear-conv}
			\bar d^{(k+1)} \;\le\; (1-\rho)\,\bar d^{(k)} ,
		\end{equation}
		with contraction factor
		\begin{equation}\label{eq:sc-rho}
			\rho := \dfrac{4\,\beta\,\mu\,\gamma^2}{\,n\bigl(L(\kappa^2_{n,\beta})^2+\mu\bigr)}\ > 0.  
		\end{equation}
		Consequently, the number of successful iterations to reach $\bar d^{(k)}\le \varepsilon$ is
		\[
		N_\varepsilon \;=\; \mathcal{O}\left(\dfrac{1}{\rho}\log\dfrac{1}{\varepsilon}\right).
		\]
	\end{theorem}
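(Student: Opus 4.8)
The plan is to exploit the fact that, in Algorithm~\ref{alg2}, a run of accepted reflections following a shrinking step keeps the simplex radius frozen at $\gamma\delta_j$, so the guaranteed average decrease $\frac{2\beta}{n}L\delta_k^2$ from \eqref{eq:conv-ref-drop} is constant across the whole run. To turn this into the linear contraction \eqref{eq:sc-linear-conv}, I would show that this frozen radius is bounded below by a multiple of $\sqrt{\bar d^{(k)}}$, so that $\frac{2\beta}{n}L\delta_k^2$ is a fixed \emph{fraction} of $\bar d^{(k)}$. The essential new ingredient over the convex analysis is that strong convexity, through the PL inequality \eqref{eq:PL}, upgrades the gradient lower bound at the centroid from the linear estimate $\|\nabla f(\mathbf{c}_k)\|\ge d^{(k)}/R$ used in Lemma~\ref{lem:grad-lb-correct} to the quadratic estimate $\|\nabla f(\mathbf{c}_k)\|^2\ge 2\mu\,d^{(k)}$; this extra power is exactly what converts the sublinear Phase~\Rmnum{2} rate into a linear one.

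First I would fix the shrinking index $j$ and derive the key radius lower bound there. Since the reflection is rejected at $j$, Lemma~\ref{lem:grad-rej} (i.e. \eqref{eq:up-bound-convex}) gives $\|\nabla f(\mathbf{c}_j)\|\le \kappa^2_{n,\beta}L\delta_j$, while the PL inequality at $\mathbf{c}_j$ gives $\|\nabla f(\mathbf{c}_j)\|^2\ge 2\mu\,d^{(j)}$. Squaring the former and combining yields $2\mu\,d^{(j)}\le (\kappa^2_{n,\beta})^2L^2\delta_j^2$. I would then invoke the centroid--mean correction \eqref{eq:conv-centroid-mean}, $d^{(j)}\ge \bar d^{(j)}-\frac{L}{2}\delta_j^2$, to pass from the central gap to the average gap; substituting and collecting the $\delta_j^2$ terms gives
\[
2\mu\,\bar d^{(j)}\ \le\ L\bigl(L(\kappa^2_{n,\beta})^2+\mu\bigr)\,\delta_j^2,
\qquad\text{i.e.}\qquad
\delta_j^2\ \ge\ \dfrac{2\mu}{L\bigl(L(\kappa^2_{n,\beta})^2+\mu\bigr)}\,\bar d^{(j)}.
\]

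Next I would propagate this bound through the ensuing run. For every accepted reflection $k>j$ before the next shrinking, the radius is frozen at $\delta_k=\gamma\delta_j$, and $\bar d^{(k)}\le \bar d^{(j)}$ because reflections decrease $\bar d$ by \eqref{eq:conv-ref-drop} and shrinking does not increase it by Lemma~\ref{lem:convex-shrink-noninc}. Hence $\delta_k^2=\gamma^2\delta_j^2\ge \frac{2\mu\gamma^2}{L(L(\kappa^2_{n,\beta})^2+\mu)}\,\bar d^{(k)}$, and feeding this into \eqref{eq:conv-ref-drop} produces $\bar d^{(k+1)}\le(1-\rho)\bar d^{(k)}$ with $\rho$ exactly as in \eqref{eq:sc-rho}. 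For the complexity conclusion, I would iterate this contraction over the accepted reflections, note that $\bar d$ never increases across the whole trajectory, bound the number of shrinkings by an argument analogous to Lemma~\ref{lem:num-shrink-steps-conv} (the radius bound above keeps $\delta_j$ above a multiple of $\sqrt{\varepsilon}$ until termination, so there are only $\mathcal{O}(\log(1/\varepsilon))$ of them), and use $\log(1/(1-\rho))\ge\rho$ to obtain $N_\varepsilon=\mathcal{O}(\rho^{-1}\log(1/\varepsilon))$.

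The main obstacle I anticipate is the passage from the central gap $d^{(j)}$ to the average gap $\bar d^{(j)}$: the PL inequality naturally controls $d^{(j)}=f(\mathbf{c}_j)-f^\star$, whereas the per--step decrease in \eqref{eq:conv-ref-drop} and the target \eqref{eq:sc-linear-conv} are stated for $\bar d^{(j)}$, which is the larger quantity by convexity. Absorbing the resulting $-\frac{L}{2}\delta_j^2$ correction into the $\delta_j^2$ coefficient is precisely what forces the additive $+\mu$ inside the denominator of $\rho$; carrying this out cleanly, and checking $\rho\in(0,1)$, is the delicate bookkeeping. A secondary point is the very first run of accepted reflections before any shrinking occurs, where no preceding rejection is available to supply the radius bound; this run contributes only a finite, $\varepsilon$--independent number of steps and is therefore subdominant to the $\mathcal{O}(\rho^{-1}\log(1/\varepsilon))$ term.
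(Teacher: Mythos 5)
Your proposal is correct and follows essentially the same route as the paper's proof: the gradient upper bound from Lemma~\ref{lem:grad-rej} at the rejected index $j$, the PL/strong--convexity lower bound $\|\nabla f(\mathbf{c}_j)\|^2\ge 2\mu\,d^{(j)}$ combined with the centroid--mean correction \eqref{eq:conv-centroid-mean} to get $\delta_j^2\ \ge\ 2\mu\,\bar d^{(j)}/\bigl(L(L(\kappa^2_{n,\beta})^2+\mu)\bigr)$, propagation through the frozen radius $\delta_k=\gamma\delta_j$ together with monotonicity of $\bar d$, and then \eqref{eq:conv-ref-drop}. Your additional remarks on the shrinking count and the initial run before any rejection correspond to what the paper relegates to the remarks following the theorem.
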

	
	\begin{proof}
		At any rejected index $j$, Lemma~\ref{lem:grad-rej} gives
		\[
		\|\nabla f(\mathbf{c}_j)\|\ \le\ \kappa^2_{n,\beta}\,L\,\delta_j.
		\]
		Strong convexity and Lemma~\ref{lem:grad-lb-correct} imply
		\[
		\|\nabla f(\mathbf{c}_j)\|^2 \;\ge\; 2\mu\,d^{(j)} \;\ge\; 2\mu\Bigl(\bar d^{(j)}-\dfrac{L}{2}\,\delta_j^2\Bigr).
		\]
		Combining the two yields
		\[
		(\kappa^2_{n,\beta})^2L^2\,\delta_j^2 \;\ge\; 2\mu\,\bar d^{(j)}-\mu L\,\delta_j^2
		\ \Longrightarrow\
		\delta_j^2 \;\ge\; \dfrac{2\mu\,\bar d^{(j)}}{\,L\bigl(L(\kappa^2_{n,\beta})^2+\mu\bigr)}.
		\]
		After shrinking, $\delta_{j+1}=\gamma\,\delta_j$. Every \emph{accepted reflection} $k$ in the subsequent block keeps the radius, so $\delta_k=\delta_{j+1}$, and hence
		\begin{equation}\label{eq:sc-radius-lb-proof}
			\delta_k^2 \;=\; \gamma^2\,\delta_j^2 \;\ge\; \dfrac{2\mu\,\gamma^2\,\bar d^{(j)}}{\,L\bigl(L(\kappa^2_{n,\beta})^2+\mu\bigr)}.
		\end{equation}
		
		For any accepted reflection, \eqref{eq:conv-ref-drop} gives
		\[
		\bar d^{(k+1)} \;\le\; \bar d^{(k)} - \dfrac{2\beta}{n}\,L\,\delta_k^2.
		\]
		Using \eqref{eq:sc-radius-lb-proof} we get
		\[
		\bar d^{(k+1)} \;\le\; \bar d^{(k)} - \dfrac{2\beta}{n}L\cdot \dfrac{2\mu\,\gamma^2\,\bar d^{(j)}}{\,L\bigl(L(\kappa^2_{n,\beta})^2+\mu\bigr)}
		\;=\; \bar d^{(k)} - \rho\,\bar d^{(j)}.
		\]
		Since $\bar d^{(\cdot)}$ is non--increasing, we have $\bar d^{(k)}\le \bar d^{(j)}$, and hence
		\[
		\bar d^{(k+1)} \;\le\; \bar d^{(k)} - \rho\,\bar d^{(k)} \;=\; (1-\rho)\,\bar d^{(k)}.
		\]
		This proves the linear contraction \eqref{eq:sc-linear-conv}. 
		
		Summing geometric decrease yields the stated $\mathcal{O}\left(\dfrac{1}{\rho}\log\dfrac{1}{\varepsilon}\right)$ bound. 
	\end{proof}
	
	\begin{remark}
		The shrinking steps still can be bounded by Lemma~\ref{lem:num-shrink-steps-conv}. Since $\rho \in \mathcal{O}(n^{-2})$ when $\beta = \dfrac{1}{2}$, the overall iteration complexity is $\mathcal{O}\!\left(n^2 \log ({1}/{\varepsilon}) + \log ({1}/{\varepsilon})\right)$.
	\end{remark}
	
	\begin{remark}
		Note that Theorem~ \ref{thm:sc-wc} relies on the occurrence of at least one shrinking step. If $\delta_0$ is chosen such that $\delta_0^2 \ge \dfrac{2\mu\bar d^{(0)}}{L\left(L(\kappa^2_{n,\beta})^2+\mu\right)}$, similar induction holds. We omit detailed explanation here for simplicity.
	\end{remark}
	
	\section{Conclusion}\label{sec:conclusion}
	
	This paper investigates the worst--case complexity of the regular simplicial search method (RSSM) that involves reflection and shrinking steps inspired by the classical method of Spendley et al.~\cite{Spe62}. Based on the sharp interpolation and extrapolaition error bound proposed by Cao et al.~\cite{Cao23}, we estimate the function value change in reflection and shrinking quantitatively. Under various assumptions, we establish explicit worst--case bounds on the number of iterations to reach an $\varepsilon$--stationary point in nonconvex settings and an $\varepsilon$--optimal solution in convex settings. To the best of our knowledge, these are the first complexity results for a simplex--type method with shrinking steps. This paper pioneers in establishing complexity results for simplex--type methods, laying a foundation for the analysis of more advanced variants in future work. 
	
	\newpage
	
	\appendix
	\section*{Appendix: Detailed Error Bounds Calculation}
	
	\subsection*{A.1. Setup and Basic Geometry}\label{ap:a1}
	
	We restate our setup here for convenience.
	
	Let $f:\R^n\to\R$ be $L$--smooth (i.e., $\|\nabla f(\mathbf{x})-\nabla f(\mathbf{y})\|\le L\|\mathbf{x}-\mathbf{y}\|$).
	At iteration $k$, the simplex $\Theta_k=\{\mathbf{x}^{(k)}_1,\ldots,\mathbf{x}^{(k)}_{n+1}\}$ is \emph{regular} with centroid
	\[
	\mathbf{c}_k=\dfrac{1}{n+1}\sum_{i=1}^{n+1}\mathbf{x}^{(k)}_i,\qquad \|\mathbf{x}^{(k)}_i-\mathbf{c}_k\|=\delta_k\ \ \text{for all }i.
	\]
	Write $\mathbf{v}_i:=\mathbf{x}^{(k)}_i-\mathbf{c}_k$. Then
	\[
	\sum_{i=1}^{n+1} \mathbf{v}_i=0,\qquad
	\mathbf{v}_i^\top \mathbf{v}_j=\begin{cases}\delta_k^2,& i=j,\\[2pt]-\dfrac{\delta_k^2}{n},& i\ne j,\end{cases}
	\qquad
	\sum_{i=1}^{n+1} \mathbf{v}_i \mathbf{v}_i^\top=\dfrac{n+1}{n}\,\delta_k^2\,I_n,
	\]
	and $\|\mathbf{x}^{(k)}_i-\mathbf{x}^{(k)}_j\|^2=2(1+\dfrac1n)\delta_k^2$ for $i\ne j$.
	
	We write $\mathbf{1}_n$ for the all--one vector in $\R^n$.
	\subsection*{A.2. Computing $G$ and $M$}\label{ap:a2}
	
	\subsubsection*{(i) Reflection Step}
	The reflection point is $\mathbf{x}^{(k)}_r=-\mathbf{x}^{(k)}_{n+1}+\dfrac{2}{n}\sum_{i=1}^n \mathbf{x}^{(k)}_i$.
	Use the coefficients
	\[
	\ell_0=-1\ \ (\mathbf{x}_0=\mathbf{x}^{(k)}_r),\qquad
	\ell_i=\dfrac{2}{n}\ (i=1,\ldots,n),\qquad
	\ell_{n+1}=-1 .
	\]
	Let $\bar{\mathbf{x}}=\dfrac{1}{n}\sum_{i=1}^n \mathbf{x}^{(k)}_i=\mathbf{c}_k-\dfrac{1}{n}\mathbf{v}_{n+1}$. A short calculation gives
	\[
	G=\dfrac{2}{n}\sum_{i=1}^n (\mathbf{x}^{(k)}_i-\bar{\mathbf{x}})(\mathbf{x}^{(k)}_i-\bar{\mathbf{x}})^\top
	-2(\bar{\mathbf{x}}-\mathbf{x}^{(k)}_{n+1})(\bar{\mathbf{x}}-\mathbf{x}^{(k)}_{n+1})^\top .
	\]
	Using the identities in A.1 and writing $\mathbf{u}:=\mathbf{v}_{n+1}/\|\mathbf{v}_{n+1}\|$, we obtain the spectral form
	\[
	G=\dfrac{2(n+1)}{n^2}\,\delta_k^2\,I_n
	-\dfrac{2(n+1)(n+2)}{n^2}\,\delta_k^2\,\mathbf{u}\mathbf{u}^\top .
	\]
	Hence, the eigenvalues of $G$ are
	\[
	\underbrace{\lambda_+=\dfrac{2(n+1)}{n^2}\,\delta_k^2}_{\text{mult. }n-1},\qquad
	\lambda_-=-\dfrac{2(n+1)^2}{n^2}\,\delta_k^2 .
	\]
	Therefore the nuclear norm is
	\[
	\|G\|_\ast=(n-1)\lambda_+ + |\lambda_-|= \dfrac{4(n+1)}{n}\,\delta_k^2.
	\]
	That is exactly:
	\[
	\boxed{\;
		\bigl| f(\mathbf{x}^{(k)}_r)+f(\mathbf{x}^{(k)}_{n+1})-\dfrac{2}{n}\textstyle\sum_{i=1}^n f(\mathbf{x}^{(k)}_i) \bigr|
		\;\le\; \dfrac{L}{2}\,\|G\|_\ast
		\;=\; \dfrac{2n+2}{n}\,L\,\delta_k^2 .\;
	}
	\]
	
	\subsubsection*{(ii) Centroid Query}
	Take $\mathbf{x}=\mathbf{c}_k$ and $\ell_i=\dfrac{1}{n+1}$ for $i=1,\ldots,n+1$, $\ell_0=-1$.
	Then
	\[
	G=\dfrac{1}{n+1}\sum_{i=1}^{n+1} \mathbf{x}^{(k)}_i \bigl(\mathbf{x}^{(k)}_i\bigr)^\top - \mathbf{c}_k \mathbf{c}_k^\top
	=\dfrac{1}{n+1}\sum_{i=1}^{n+1} \mathbf{v}_i \mathbf{v}_i^\top
	=\dfrac{\delta_k^2}{n}\,I_n .
	\]
	Thus $\|G\|_\ast=\delta_k^2$, so
	\[
	\boxed{\;
		\biggl| f(\mathbf{c}_k)-\dfrac{1}{n+1}\sum_{i=1}^{n+1} f(\mathbf{x}^{(k)}_i) \biggr|
		\;\le\; \dfrac{L}{2}\,\delta_k^2 .\;
	}
	\]
	
	\subsubsection*{(iii) Shrinking Step}
	Fix $i\in\{2,\ldots,n+1\}$ and set $\hat{\mathbf{x}}_i=\gamma \mathbf{x}^{(k)}_i+(1-\gamma)\mathbf{x}^{(k)}_1$ with $\gamma \in (0,1)$.
	
	Use the two-point coefficients:
	\[
	\ell_0=-1\ \ (\mathbf{x}_0=\hat{\mathbf{x}}_i),\qquad
	\ell_1=1-\gamma,\qquad
	\ell_i=\gamma,\qquad
	\ell_j=0\ (j\notin\{1,i\}) .
	\]
	A direct calculation yields
	\[
	G=\gamma(1-\gamma)\,\bigl(\mathbf{x}^{(k)}_i-\mathbf{x}^{(k)}_1\bigr)\bigl(\mathbf{x}^{(k)}_i-\mathbf{x}^{(k)}_1\bigr)^\top ,
	\]
	so $G\succeq0$ and $\|G\|_\ast=\tr(G)=\gamma(1-\gamma)\|\mathbf{x}^{(k)}_i-\mathbf{x}^{(k)}_1\|^2$.
	Using $\|\mathbf{x}^{(k)}_i-\mathbf{x}^{(k)}_1\|^2=2(1+\dfrac1n)\delta_k^2$, we obtain:
	\[
	\boxed{\;
		\bigl| f(\hat{\mathbf{x}}_i)-\gamma f(\mathbf{x}^{(k)}_i)-(1-\gamma)f(\mathbf{x}^{(k)}_1) \bigr|
		\;\le\; \dfrac{L}{2}\,\|G\|_\ast
		\;=\; \dfrac{n+1}{n}\,L\,\gamma(1-\gamma)\,\delta_k^2 .\;
	}
	\]
	
    \subsection*{\hypertarget{app:reflection}{A.3. Reflection Step}}  
	\subsubsection*{(i) Reflection Step}
	
	We keep the notation of A.1: $\mathbf{c}_k=\dfrac{1}{n+1}\sum_{i=1}^{n+1}\mathbf{x}^{(k)}_i$ and $\mathbf{v}_i:=\mathbf{x}^{(k)}_i-\mathbf{c}_k$, so that
	\[
	\sum_{i=1}^{n+1}\mathbf{v}_i=0,\qquad 
	\mathbf{v}_i^\top\mathbf{v}_j=
	\begin{cases}
		\delta_k^2,& i=j,\\[2pt]
		-\dfrac{\delta_k^2}{n},& i\neq j.
	\end{cases}
	\]
	The reflection query point is
	\[
	\mathbf{x}=\mathbf{x}_r^{(k)}\;=\;-\mathbf{x}^{(k)}_{n+1}+\dfrac{2}{n}\sum_{i=1}^n \mathbf{x}^{(k)}_i.
	\]
	Let $\mathbf{u}:=\mathbf{v}_{n+1}/\|\mathbf{v}_{n+1}\|=\mathbf{v}_{n+1}/\delta_k$. From A.2 we know that the negative eigenspace of $G$ is one--dimensional and aligned with $\mathbf{u}$, hence we may take $P_-=\mathbf{u}\in\mathbb{R}^{n\times 1}$.
	
	As in \cite{Cao23}, let $Y\in\mathbb{R}^{(n+1)\times n}$ have $i$th row $(\mathbf{x}^{(k)}_i-\mathbf{x}_r^{(k)})^\top$. For the reflection coefficients
	\[
	\ell_0=-1,\qquad \ell_i=\dfrac{2}{n}\ (i=1,\dots,n),\qquad \ell_{n+1}=-1,
	\]
	we have $\mathcal{I}_+=\{1,\dots,n\}$ and $\mathcal{I}_-=\{0,n+1\}$, so that $Y_+$ is formed by the first $n$ rows and $Y_-$ keeps the $(n{+}1)$st row (the row indexed by $n{+}1$; the row indexed by $0$ is not included in $Y_-$).
	
	First write $\bar{\mathbf{x}}:=\dfrac{1}{n}\sum_{i=1}^n \mathbf{x}^{(k)}_i=\mathbf{c}_k-\dfrac{1}{n}\mathbf{v}_{n+1}$, then
	\[
	\mathbf{x}_r^{(k)}=-\mathbf{x}^{(k)}_{n+1}+2\bar{\mathbf{x}}
	=\mathbf{c}_k-\Bigl(1+\dfrac{2}{n}\Bigr)\mathbf{v}_{n+1}.
	\]
	Hence, for $i=1,\dots,n$,
	\[
	\mathbf{x}^{(k)}_i-\mathbf{x}_r^{(k)}
	=\mathbf{v}_i+\Bigl(1+\dfrac{2}{n}\Bigr)\mathbf{v}_{n+1},
	\qquad
	\mathbf{x}^{(k)}_{n+1}-\mathbf{x}_r^{(k)}
	=2\Bigl(1+\dfrac{1}{n}\Bigr)\mathbf{v}_{n+1}.
	\]
	Using $\mathbf{u}=\mathbf{v}_{n+1}/\delta_k$ and $\mathbf{v}_i^\top\mathbf{v}_{n+1}=-\delta_k^2/n$ for $i\le n$, we obtain
	\[
	\bigl(\mathbf{x}^{(k)}_i-\mathbf{x}_r^{(k)}\bigr)^\top \mathbf{u}
	=\dfrac{\mathbf{v}_i^\top\mathbf{v}_{n+1}}{\delta_k}
	+\Bigl(1+\dfrac{2}{n}\Bigr)\dfrac{\mathbf{v}_{n+1}^\top\mathbf{v}_{n+1}}{\delta_k}
	=-\dfrac{\delta_k}{n}+\Bigl(1+\dfrac{2}{n}\Bigr)\delta_k
	=\Bigl(1+\dfrac{1}{n}\Bigr)\delta_k,
	\]
	so
	\[
	Y_+P_-=\Bigl( \bigl(\mathbf{x}^{(k)}_i-\mathbf{x}_r^{(k)}\bigr)^\top \mathbf{u} \Bigr)_{i=1}^n
	=\Bigl(1+\dfrac{1}{n}\Bigr)\delta_k\,\mathbf{1}_n.
	\]
	Similarly,
	\[
	Y_-P_-=\bigl(\mathbf{x}^{(k)}_{n+1}-\mathbf{x}_r^{(k)}\bigr)^\top \mathbf{u}
	=2\Bigl(1+\dfrac{1}{n}\Bigr)\delta_k.
	\]
	
	Finally,
	\[
	M=\mathrm{diag}(\ell_+)\,Y_+P_-\,(Y_-P_-)^{-1}
	=\Bigl(\dfrac{2}{n}I_n\Bigr)\cdot
	\dfrac{\bigl(1+\dfrac{1}{n}\bigr)\delta_k\,\mathbf{1}_n}{2\bigl(1+\dfrac{1}{n}\bigr)\delta_k}
	=\dfrac{1}{n}\,\mathbf{1}_n\in\mathbb{R}^{n\times 1}.
	\]
	Therefore
	\[
	\mu_{i,n+1}=e_i^\top M=\dfrac{1}{n},
	\qquad
	\mu_{i0}=\ell_i-\mu_{i,n+1}
	=\dfrac{2}{n}-\dfrac{1}{n}
	=\dfrac{1}{n},\quad i=1,\dots,n,
	\]
	which are all non--negative.
	
	\subsubsection*{(ii) Centroid Query}
	
	Here $\mathbf{x}=\mathbf{c}_k$ and $\ell_i=\dfrac{1}{n+1}$ for $i=1,\ldots,n+1$, while
	$\ell_0=-1$. Thus $\mathcal{I}_+ = \{1,\ldots,n+1\}$ and
	$\mathcal{I}_-=\{0\}$. Hence $|\mathcal{I}_-|-1=0$ and $M$ is a
	$(n{+}1)\times 0$ empty matrix (no computation is needed). The only non--zero
	coefficients are
	\[
	\mu_{i0}=\ell_i=\dfrac{1}{n+1},\qquad i=1,\ldots,n+1 .
	\]
	
	\subsubsection*{(iii) Shrinking Step}
	
	Fix $i\in\{2,\ldots,n+1\}$ and $\mathbf{x}=\hat{\mathbf{x}}_i=\gamma \mathbf{x}^{(k)}_i+(1-\gamma)\mathbf{x}^{(k)}_1$,
	with
	$\ell_0=-1$, $\ell_1=1-\gamma$, $\ell_i=\gamma$, and $\ell_j=0$ otherwise.
	Again $\mathcal{I}_-=\{0\}$, so $M$ is empty and the (only) non--zero
	$\mu_{ij}$ are
	\[
	\mu_{10}=\ell_1=1-\gamma,\qquad
	\mu_{i0}=\ell_i=\gamma ,
	\]
	both non--negative.
	
	\subsection*{A.4. Convex Extrapolation Error for Reflection}\label{ap:a4}
	Recall that in convex settings, the maximum of $\dfrac{1}{2}\ip{G}{H}$ is $\dfrac{L}{2}\max\left(\tr(G_+), -\tr(G_-)\right)$.
	
	For reflection, the formulation of $G$ remains
	\[
	G=\dfrac{2(n+1)}{n^2}\,\delta_k^2\,I_n
	-\dfrac{2(n+1)(n+2)}{n^2}\,\delta_k^2\,\mathbf{u}\mathbf{u}^\top ,
	\]
	where the eigenvalues of $G$ are
	\[
	\underbrace{\lambda_+=\dfrac{2(n+1)}{n^2}\,\delta_k^2}_{\text{mult. }n-1},\qquad
	\lambda_-=-\dfrac{2(n+1)^2}{n^2}\,\delta_k^2 .
	\]
	Consequently, the desired error bound is
	\[
	\boxed{\;
		\Bigl|\,f(\mathbf{x}^{(k)}_r)+f(\mathbf{x}^{(k)}_{n+1})-\dfrac{2}{n}\sum_{i=1}^n f(\mathbf{x}^{(k)}_i)\Bigr|
		\ \le\ \dfrac{L}{2}\,\max\!\Bigl\{\dfrac{2(n^2-1)}{n^2},\ \dfrac{2(n+1)^2}{n^2}\Bigr\}\delta_k^2
		\ =\ L\left(1+\dfrac{1}{n}\right)^2\delta_k^2.\; }
	\]
	
	\bibliographystyle{unsrtnat}
	\bibliography{refs}

\begin{thebibliography}{38}
\providecommand{\natexlab}[1]{#1}
\providecommand{\url}[1]{\texttt{#1}}
\expandafter\ifx\csname urlstyle\endcsname\relax
  \providecommand{\doi}[1]{doi: #1}\else
  \providecommand{\doi}{doi: \begingroup \urlstyle{rm}\Url}\fi

\bibitem[Conn et~al.(2009)Conn, Scheinberg, and Vicente]{Conn2009}
A.~R. Conn, K.~Scheinberg, and L.~N. Vicente.
\newblock \emph{Introduction to Derivative-Free Optimization}.
\newblock SIAM, Philadelphia, 2009.
\newblock \doi{10.1137/1.9780898718811}.

\bibitem[Audet and Hare(2017)]{Audet2017}
C.~Audet and W.~Hare.
\newblock \emph{Derivative-Free and Blackbox Optimization}.
\newblock Springer Series in Operations Research and Financial Engineering.
  Springer, Cham, 2017.
\newblock \doi{10.1007/978-3-319-68812-4}.

\bibitem[Larson et~al.(2019)Larson, Menickelly, and Wild]{Lar19}
J.~Larson, M.~Menickelly, and S.~M. Wild.
\newblock Derivative-free optimization methods.
\newblock \emph{Acta Numer.}, 28:\penalty0 287--404, 2019.
\newblock \doi{10.1017/S096249291900004X}.

\bibitem[Berahas et~al.(2022)Berahas, Cao, Choromanski, and
  Scheinberg]{Berahas2022}
A.~S. Berahas, L.~Cao, K.~Choromanski, and K.~Scheinberg.
\newblock A theoretical and empirical comparison of gradient approximations in
  derivative-free optimization.
\newblock \emph{Found. Comput. Math.}, 22\penalty0 (2):\penalty0 507--560,
  2022.
\newblock \doi{10.1007/s10208-021-09523-2}.

\bibitem[Wild et~al.(2008)Wild, Regis, and Shoemaker]{Wild08}
S.~M. Wild, R.~G. Regis, and C.~A. Shoemaker.
\newblock {ORBIT}: Optimization by radial basis function interpolation in
  trust-regions.
\newblock \emph{SIAM J. Sci. Comput.}, 30\penalty0 (6):\penalty0 3197--3219,
  2008.
\newblock \doi{10.1137/070697748}.

\bibitem[Powell(1994)]{Pow94}
M.~J.~D. Powell.
\newblock A direct search optimization method that models the objective and
  constraint functions by linear interpolation.
\newblock In S.~Gomez and J.-P. Hennart, editors, \emph{Advances in
  Optimization and Numerical Analysis}, volume 275 of \emph{Mathematics and Its
  Applications}, pages 51--67. Kluwer, Dordrecht, 1994.
\newblock \doi{10.1007/978-94-015-8330-5_4}.

\bibitem[Powell(2002)]{Pow02}
M.~J.~D. Powell.
\newblock {UOBYQA}: unconstrained optimization by quadratic approximation.
\newblock \emph{Math. Program.}, 92\penalty0 (3):\penalty0 555--582, 2002.
\newblock \doi{10.1007/s101070100290}.

\bibitem[Powell(2006)]{Pow06}
M.~J.~D. Powell.
\newblock The {NEWUOA} software for unconstrained optimization without
  derivatives.
\newblock In G.~Di~Pillo and A.~Murli, editors, \emph{Large-Scale Nonlinear
  Optimization}, pages 255--297. Springer, New York, 2006.
\newblock \doi{10.1007/0-387-30063-5_16}.

\bibitem[Powell(2009)]{Pow09}
M.~J.~D. Powell.
\newblock The {BOBYQA} algorithm for bound constrained optimization without
  derivatives.
\newblock Technical Report NA2009/06, University of Cambridge, Cambridge, UK,
  2009.

\bibitem[Powell(2015)]{Pow15}
M.~J.~D. Powell.
\newblock On fast trust region methods for quadratic models with linear
  constraints.
\newblock \emph{Math. Program. Comput.}, 7:\penalty0 237--267, 2015.
\newblock \doi{10.1007/s12532-015-0084-2}.

\bibitem[Ragonneau and Zhang(2024)]{PDFO24}
T.~M. Ragonneau and Z.~Zhang.
\newblock {PDFO}: {A} cross-platform package for {P}owell's derivative-free
  optimization solvers.
\newblock \emph{Math. Program. Comput.}, 16\penalty0 (4):\penalty0 535--559,
  2024.
\newblock \doi{10.1007/s12532-023-00242-y}.

\bibitem[Zhang(2023)]{PRIMA23}
Z.~Zhang.
\newblock {PRIMA}: Reference implementation for {P}owell's methods with
  modernization and amelioration, 2023.
\newblock URL \url{https://www.libprima.net}.

\bibitem[Xie and Yuan(2023{\natexlab{a}})]{Xie23Comb}
P.~Xie and Y.-X. Yuan.
\newblock A derivative-free optimization algorithm combining line-search and
  trust-region techniques.
\newblock \emph{Chin. Ann. Math. Ser. B}, 44\penalty0 (5):\penalty0 719--734,
  2023{\natexlab{a}}.
\newblock \doi{10.1007/s11401-023-0130-3}.

\bibitem[Xie and Yuan(2024)]{Xie24Trans}
P.~Xie and Y.-X. Yuan.
\newblock Derivative-free optimization with transformed objective functions and
  the algorithm based on the least {F}robenius norm updating quadratic model.
\newblock \emph{J. Oper. Res. Soc. China}, 12\penalty0 (1):\penalty0 1--37,
  2024.
\newblock \doi{10.1007/s40305-023-00465-z}.

\bibitem[Xie and Yuan(2025{\natexlab{a}})]{Xie25Least}
P.~Xie and Y.-X. Yuan.
\newblock Least {$H^2$} norm updating of quadratic interpolation models for
  derivative-free trust-region algorithms.
\newblock \emph{IMA J. Numer. Anal.}, 45\penalty0 (1):\penalty0 386--410,
  2025{\natexlab{a}}.
\newblock \doi{10.1093/imanum/drae106}.

\bibitem[Xie(2025)]{Xie25Suf}
P.~Xie.
\newblock Sufficient conditions for error distance reduction in the
  {$\ell^2$}-norm trust region between minimizers of local nonconvex
  multivariate quadratic approximates.
\newblock \emph{J. Comput. Appl. Math.}, 453:\penalty0 116146, 2025.
\newblock \doi{10.1016/j.cam.2024.116146}.

\bibitem[Gratton et~al.(2018)Gratton, Royer, Vicente, and
  Zhang]{Gratton18Trust}
S.~Gratton, C.~W. Royer, L.~N. Vicente, and Z.~Zhang.
\newblock Complexity and global rates of trust-region methods based on
  probabilistic models.
\newblock \emph{IMA J. Numer. Anal.}, 38\penalty0 (3):\penalty0 1579--1597,
  2018.
\newblock \doi{10.1093/imanum/drx055}.

\bibitem[Cao et~al.(2024)Cao, Berahas, and Scheinberg]{Cao24}
L.~Cao, A.~S. Berahas, and K.~Scheinberg.
\newblock First- and second-order high probability complexity bounds for
  trust-region methods with noisy oracles.
\newblock \emph{Math. Program.}, 207\penalty0 (1):\penalty0 55--106, 2024.
\newblock \doi{10.1007/s10107-023-01967-y}.

\bibitem[Zhang(2025)]{Zhang25}
Z.~Zhang.
\newblock Scalable derivative-free optimization algorithms with low-dimensional
  subspace techniques, 2025.

\bibitem[Xie and Yuan(2023{\natexlab{b}})]{Xie2023MoSub}
P.~Xie and Y.-X. Yuan.
\newblock A new two-dimensional model-based subspace method for large-scale
  unconstrained derivative-free optimization: {2D-MoSub}, 2023{\natexlab{b}}.

\bibitem[Cartis and Roberts(2023)]{Cartis23}
C.~Cartis and L.~Roberts.
\newblock Scalable subspace methods for derivative-free nonlinear least-squares
  optimization.
\newblock \emph{Math. Program.}, 199\penalty0 (1):\penalty0 461--524, 2023.
\newblock \doi{10.1007/s10107-022-01826-2}.

\bibitem[Xie and Yuan(2025{\natexlab{b}})]{Xie2025New}
P.~Xie and Y.-X. Yuan.
\newblock A derivative-free method using a new underdetermined quadratic
  interpolation model.
\newblock \emph{SIAM J. Optim.}, 35\penalty0 (2):\penalty0 1110--1133,
  2025{\natexlab{b}}.
\newblock \doi{10.1137/23M157077X}.

\bibitem[Spendley et~al.(1962)Spendley, Hext, and Himsworth]{Spe62}
W.~Spendley, G.~R. Hext, and F.~R. Himsworth.
\newblock Sequential application of simplex designs in optimisation and
  evolutionary operation.
\newblock \emph{Technometrics}, 4\penalty0 (4):\penalty0 441--461, 1962.
\newblock \doi{10.1080/00401706.1962.10490033}.

\bibitem[Nelder and Mead(1965)]{Nel65}
J.~A. Nelder and R.~Mead.
\newblock A simplex method for function minimization.
\newblock \emph{Comput. J.}, 7\penalty0 (4):\penalty0 308--313, 1965.
\newblock \doi{10.1093/comjnl/7.4.308}.

\bibitem[Mor{\'e} and Wild(2009)]{Mor09}
J.~J. Mor{\'e} and S.~M. Wild.
\newblock Benchmarking derivative-free optimization algorithms.
\newblock \emph{SIAM J. Optim.}, 20\penalty0 (1):\penalty0 172--191, 2009.
\newblock \doi{10.1137/080724083}.

\bibitem[Rios and Sahinidis(2013)]{Rio13}
L.~M. Rios and N.~V. Sahinidis.
\newblock Derivative-free optimization: A review of algorithms and comparison
  of software implementations.
\newblock \emph{J. Glob. Optim.}, 56\penalty0 (3):\penalty0 1247--1293, 2013.
\newblock \doi{10.1007/s10898-012-9951-y}.

\bibitem[McKinnon(1998)]{Mck98}
K.~I.~M. McKinnon.
\newblock Convergence of the {N}elder--{M}ead simplex method to a nonstationary
  point.
\newblock \emph{SIAM J. Optim.}, 9\penalty0 (1):\penalty0 148--158, 1998.
\newblock \doi{10.1137/S105262349630323X}.

\bibitem[Vicente(2013)]{Vicente13}
L.~N. Vicente.
\newblock Worst case complexity of direct search.
\newblock \emph{EURO J. Comput. Optim.}, 1\penalty0 (1):\penalty0 143--153,
  2013.
\newblock \doi{10.1007/s13675-013-0010-y}.

\bibitem[Dodangeh et~al.(2016)Dodangeh, Vicente, and Zhang]{Dodangeh16}
M.~Dodangeh, L.~N. Vicente, and Z.~Zhang.
\newblock On the optimal order of worst case complexity of direct search.
\newblock \emph{Optim. Lett.}, 10:\penalty0 699--708, 2016.
\newblock \doi{10.1007/s11590-015-0899-2}.

\bibitem[Garmanjani et~al.(2016)Garmanjani, J{\'u}dice, and Vicente]{Gar16}
R.~Garmanjani, D.~J{\'u}dice, and L.~N. Vicente.
\newblock Trust-region methods without using derivatives: worst case complexity
  and the nonsmooth case.
\newblock \emph{SIAM J. Optim.}, 26\penalty0 (4):\penalty0 1987--2011, 2016.
\newblock \doi{10.1137/15M102148X}.

\bibitem[Yu(1979)]{Yu79}
W.~Yu.
\newblock The convergence property of the simplex evolutionary techniques.
\newblock \emph{Sci. Sin. (Special Issue of Mathematics)}, \penalty0
  (1):\penalty0 68--77, 1979.

\bibitem[Cao et~al.(2023)Cao, Wen, and Yuan]{Cao23}
L.~Cao, Z.~Wen, and Y.-X. Yuan.
\newblock The error in multivariate linear extrapolation with applications to
  derivative-free optimization, 2023.

\bibitem[Woods(1985)]{Woo85}
D.~J. Woods.
\newblock \emph{An interactive approach for solving multi-objective
  optimization problems}.
\newblock Ph.d. thesis, Rice University, Houston, TX, USA, 1985.

\bibitem[Lagarias et~al.(1998)Lagarias, Reeds, Wright, and Wright]{Lag98}
J.~C. Lagarias, J.~A. Reeds, M.~H. Wright, and P.~E. Wright.
\newblock Convergence properties of the {N}elder--{M}ead simplex method in low
  dimensions.
\newblock \emph{SIAM J. Optim.}, 9\penalty0 (1):\penalty0 112--147, 1998.
\newblock \doi{10.1137/S1052623496303216}.

\bibitem[Kelley(1999)]{Kel99}
C.~T. Kelley.
\newblock Detection and remediation of stagnation in the {N}elder--{M}ead
  algorithm using a sufficient decrease condition.
\newblock \emph{SIAM J. Optim.}, 10\penalty0 (1):\penalty0 43--55, 1999.
\newblock \doi{10.1137/S1052623498337373}.

\bibitem[Tseng(1999)]{Tse99}
P.~Tseng.
\newblock Fortified-descent simplicial search method: A general approach.
\newblock \emph{SIAM J. Optim.}, 10\penalty0 (1):\penalty0 269--288, 1999.
\newblock \doi{10.1137/S1052623498342468}.

\bibitem[Lagarias et~al.(2012)Lagarias, Poonen, and Wright]{Lag12}
J.~C. Lagarias, B.~Poonen, and M.~H. Wright.
\newblock Convergence of the restricted {N}elder--{M}ead algorithm in two
  dimensions.
\newblock \emph{SIAM J. Optim.}, 22\penalty0 (2):\penalty0 501--532, 2012.
\newblock \doi{10.1137/10081822X}.

\bibitem[Gal{\'a}ntai(2022)]{Gal22}
A.~Gal{\'a}ntai.
\newblock Convergence of the {N}elder--{M}ead method.
\newblock \emph{Numer. Algorithms}, 90\penalty0 (4):\penalty0 2025--2054, 2022.
\newblock \doi{10.1007/s11075-021-01259-3}.

\end{thebibliography}
	
\end{document}